\documentclass[a4paper,12pt]{article}
\date{}

\usepackage{mathrsfs}
\usepackage[latin1]{inputenc}
\usepackage[english]{babel}
\usepackage{mathrsfs}
\usepackage{amsmath}
\usepackage{amsfonts}
\usepackage{amsthm}

\usepackage{amssymb, graphicx, epsfig}

\usepackage{latexsym}
\DeclareMathOperator*{\sumx}{\sum\nolimits^{*}}

\newcommand{\radsumma}[2]{\genfrac{}{}{0pt}{}{#1}{#2}}

\newtheorem*{ass}{(A1)}
\newtheorem*{ass2}{(A2)}
\newtheorem*{ass3}{(A3)}

\newtheorem{thm}{Theorem}[section] 
\newtheorem{Lemma}{Lemma}[section]
\newtheorem{cor}{Corollary}[section]

\theoremstyle{definition}
\newtheorem{rem}{Remark}[section]
\author{Cecilia~Holmgren}
\title{Novel Characteristics of Split Trees by use of Renewal Theory}

\begin{document}
\maketitle
\begin{abstract}
\emph{We investigate characteristics of random split trees introduced by Devroye \cite{devroye3};
split trees include for example binary search trees, $m$-ary search trees, quadtrees, median of $(2k+1)$-trees, simplex trees, tries and digital search trees. More precisely: We introduce the use of renewal theory in the studies of split trees, and use this theory to prove several results about split trees. A split tree of cardinality $n$ is constructed by distributing $n$ ``balls'' (which often represent ``key numbers'') in a subset of vertices of an infinite tree. One of our main results is to give a relation between the deterministic number of balls $n$ and the random number of vertices $N$. In \cite{devroye3} there is a central limit law for the depth of the last inserted ball so that most vertices are close to $\frac{\ln n}{\mu}+\mathcal{O}\Big(\sqrt{\ln n}\Big)$, where $\mu$ is some constant depending on the type of split tree; we sharpen this result by finding an upper bound for the expected number of vertices with depths  $\geq\frac{\ln n}{\mu}+\ln^{0.5+\epsilon} n$ or depths $\leq\frac{\ln n}{\mu}+\ln^{0.5+\epsilon} n$ for any choice of $\epsilon>0$. We also find the first asymptotic of the variances of the depths of the balls in the tree.
}\end{abstract}

\section{Introduction}
\subsection {Preliminaries}

In this paper we consider random split trees introduced by Devroye \cite{devroye3}. Some  important examples of split trees are binary search trees, $m$-ary search trees, quadtrees, median of $(2k+1)$-trees, simplex trees, tries and digital search trees. As shown in \cite{devroye3} the split trees belong to the family of so-called 
$\log{n}$ trees, i.e., trees with height (maximal depth) $a.a.s.$\ $\mathcal O (\log{n})$. (For the notation $a.a.s$, see \cite{jan4}.)


The (random) split trees constitute a large class of random trees which are recursively generated. Their formal definition is given in the ``split tree generating algorithm'' below. To facilitate the penetration of this rather complex algorithm we will first provide a brief heuristic description. 

A skeleton tree $S_b$ of  branch factor $b$ is an infinite rooted tree in which each vertex
 has exactly $b$ children that are numbered $1,2,\dots ,b$.
A split tree is a finite subtree of a skeleton tree $S_b$. The split tree is constructed recursively by distributing balls one at a time to a subset of vertices of $S_b$.  We say that the tree has cardinality $n$ if $n$ balls are distributed. Since many of the common split trees come from algorithms in Computer Science the balls often represent some ``key numbers'' or other data symbols. There is also a so-called vertex capacity, $s>0$, which means that each node can hold at most $s$ balls.
We say that a vertex $v$ is a leaf in a split tree if the node itself holds at least one ball but no descendants of $v$ hold any balls. The split tree consists of the leaves and all the ancestors of the leaves, in particular the root of $S_b$, but no descendant of a leaf is included. In this way the definition of leaves in split trees is equivalent to the usual definition of leaves in trees. See Figure \ref{boll} and Figure \ref{boll2}, where two examples of split trees are illustrated (the parameters $s_0$ and $s_1$ in the figures are introduced in the formal ``split tree generating algorithm'').

The first ball is placed in the root of $S_b$.
  A new ball is added to the tree by starting at the root, and then letting the ball fall down to lower levels in the tree until it reaches a leaf. 
Each vertex $v$ of $S_b$ is given an independent copy of the so-called random split vector $\mathcal{V}=(V_1,V_2 \dots ,V_b)$
 of probabilities, where $\sum_i V_i=1$ and $V_i\geq 0$.
The split vectors control the path that the ball takes until it finally reaches a leaf; when the ball falls down one level from vertex $v$ to one of its children, it chooses the $i$-th child  of $v$  with probability $V_i$, 
i.e., the $i$-th component of the split vector associated to $v$.
When a full leaf (i.e., a leaf which already holds $s$ balls) is reached by a new ball it splits. This means that some of the $s+1$ balls are given to its children, leading to new leaves so that more nodes will be included in the tree. 
When all the $n$ balls are distributed we end up with a split tree with a finite number of nodes which we denote by the parameter $N$.

\textbf{The split tree generating algorithm:} The formal, comprehensive ``split tree generating algorithm'' is as follows with the following introductory notation.
The (random) split tree has the parameters $b,n,s$ and $\mathcal{V}$ as we described above; there are also two other parameters: $s_0,s_1$ (related to the parameter $s$) that occur in the algorithm below.
 Let $n_v$ denote the total number of balls that the vertices 
in the subtree rooted at vertex $v$ hold together, and $C_v$ be the number of balls that are held by $v$ itself. 
Thus, we note that a vertex $v$ is a leaf if and only if $C_v=n_v>0$. Also note that a vertex $v\in S_b$ is included in the split tree if, and only if, $n_v>0$. If  $n_v=0$, the vertex $v$ is not included and it is called useless. 

Below there is a description of the algorithm which determines how the $n$ balls are distributed over the vertices. 
Initially there are no balls, i.e., $C_v=0$ for each vertex $v$. 
Choose an independent copy  $\mathcal{V}_v$ of  $\mathcal{V}$ for every vertex $v \in S_b$. Add balls one by one to the root by the following recursive procedure for adding a ball to the subtree rooted at $v$.

\begin{enumerate}

\item\label{first} If $v$ is not a leaf, choose child $i$ with probability $V_i$,
and recursively add the ball to the subtree rooted at child $i$, by the rules given in steps \ref{first}, \ref{second} and \ref{last}.

\item\label{second}  If $v$ is a leaf and $C_v=n_v<s$, ($s$ is the capacity of the vertex) then add the ball to $v$ and stop. 
Thus, $C_v$ and $n_v$ increase by 1.

\item \label{last} If $v$ is a leaf and  $C_v=n_v=s$, the ball cannot be placed at $v$ since it 
is occupied by the maximal number of balls it can hold. In this case let $n_v=s+1$ and $C_v=s_0$, by placing $s_0\leq s$ 
randomly chosen balls at $v$ and $s+1-s_0$ balls at its children. This is done by first giving $s_1$ randomly chosen balls
to each of the $b$ children. The remaining $s+1-s_0 -bs_1$ balls are placed by choosing a child for each ball independently 
according to the probability vector $\mathcal{V}_v=(V_1,V_2, \dots ,V_b)$, and then using the algorithm described in steps 
\ref{first}, \ref{second} and \ref{last} applied to the subtree rooted at the selected child. Note that if $s_0>0$ or $s_1>0$, this procedure does not need to be repeated since no child could reach the capacity $s$, 
whereas in the case 
$s_0=0$ this procedure may have to be repeated several times.

\end{enumerate}

From \ref{last}. it follows that the integers  $s_0$ and $s_1$  have to satisfy the inequality 
\begin{align*}0\leq s_0\leq s,~0\leq bs_1\leq s+1-s_0.
\end{align*}
Note that every nonleaf vertex has $C_v=s_0$ balls and every leaf has $0<C_v\leq s$ balls.

\begin{figure}[h]
\begin{center}
\includegraphics[scale=0.40]{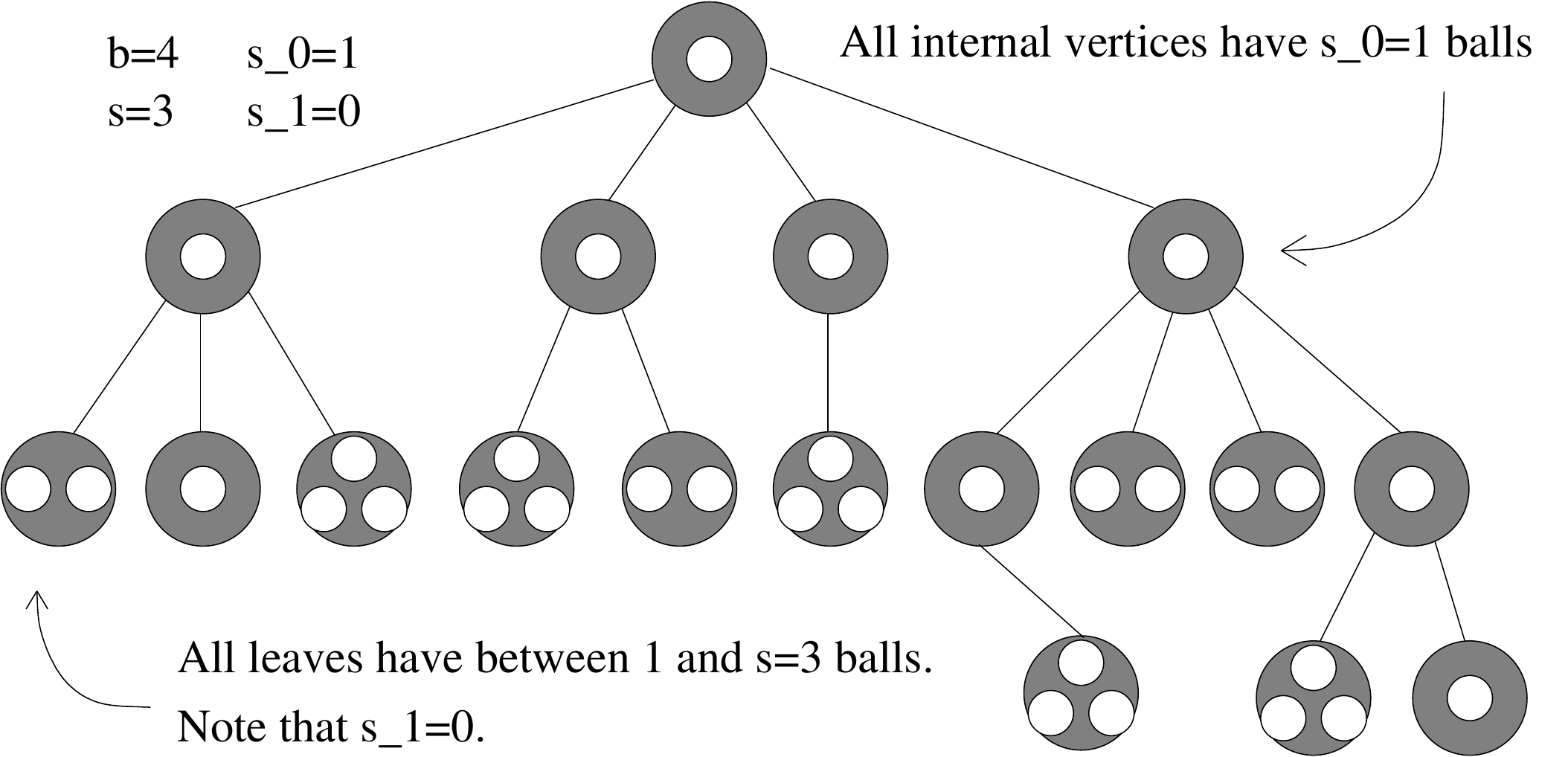}
\end{center}
\caption{\small{\textit{This figure illustrates a split tree with parameters
$b=4,~s=3,~s_0=1$ and $s_1=0$. }}}
\label{boll}
\end{figure}

\begin{figure}[h]
\begin{center}
\includegraphics[scale=0.40]{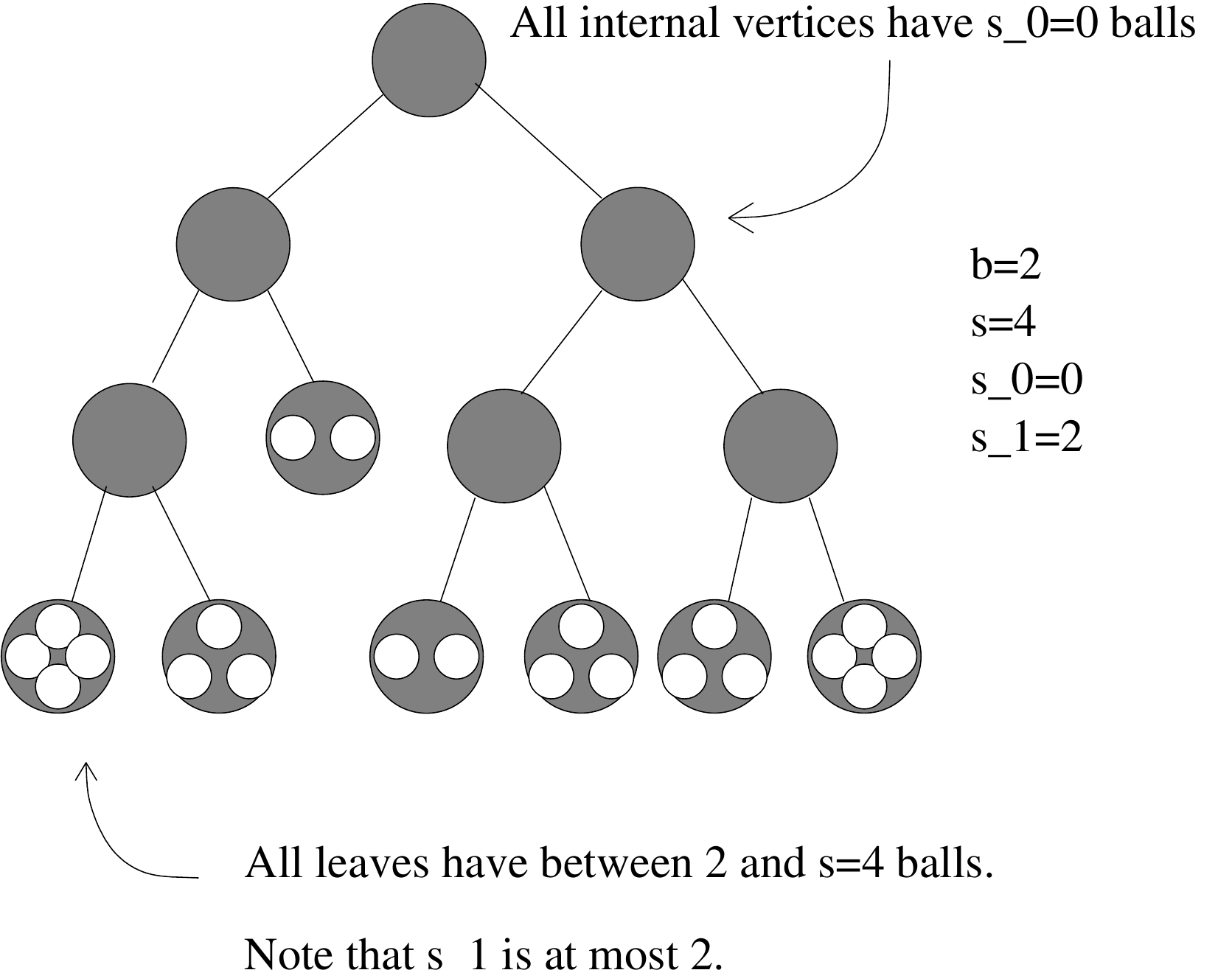}
\end{center}
\caption{\small{\textit{ This figure illustrates a split tree with parameters
$b=2,~s=4,~s_0=0$ and $s_1=2$.}}}
\label{boll2}
\end{figure}

 Figure \ref{boll} shows a split tree  with cardinality 32 and parameters $(b,s,s_0,s_1)=(4,3,1,0)$ and Figure \ref{boll2} shows a split tree with cardinality 21 and parameters $(b,s,s_0,s_1)=(2,4,0,2)$.

We can assume that the components $V_i$ of the split vector $\mathcal{V}$ are identically 
distributed. If this were not the case they can anyway be made identically distributed by using a random permutation, see \cite{devroye3}. Let $V$ be a random variable with this distribution.
This gives (because $\sum_i V_i=1$) that $\mathbf{E}(V)=\frac{1}{b}$.
We use the notation $T^{n}$ to denote a split tree with $n$ balls. However, note that even conditioned on the fact that the split tree has $n$ balls, the number of nodes $N$, is still a random number.
The only parameters that are important in this work (and in general these parameters are the important ones for most results concerning split trees) are the cardinality $n$, the branch factor $b$ and the split vector $\mathcal{V}$; this is illustrated in Section \ref{Subtrees}.
As an example, in the binary search tree considered as a split tree, $b=2$ and the split vector $\mathcal{V}$ is $(U,1-U)$ where $U$ is a uniform $U(0,1)$ random variable. This is a beta $(1,1)$ random variable. In fact for many important split trees $V$ is beta-distributed.
(The other parameters for the binary search tree considered as a split tree are $s=1,~s_0=1$ and $s_1=0$.) For the binary search tree the number of balls $n$ is the same as the number of vertices $N$; this is not true for split trees in general.

\subsection{Notation}\label{not}
In this section some of the notation that we use in the present study is collected. 

Let $T^{n}$ denote a split tree with $n$ balls; for simplicity we often write $T$. Let $\mathsf{V}\big(T\big)$ denote the set of vertices in a rooted tree $T$. We write $|S|$ for the number of vertices in a set $S$. Note that for the number of vertices $N$ we have  $N=\big|\mathsf{V}\big(T^{n}\big)\big|$. 
Let $T_v$ be a subtree rooted at $v$.
Let $n_v$ denote the number of balls in the subtree rooted at vertex $v$ and let $N_v$ denote the number of vertices. Note that $N_v=\big|\mathsf{V}\big(T_v\big)\big|$.

Let $D_n$ denote the depth of the last inserted ball in the tree and $D_k$ the depth of the $k$-th inserted ball when all $n$ balls have been added. Let $D_n^{*}$ be the average depth, i.e., $D_n^{*}=\dfrac{\sum_{k=1}^{n}D_k}{n}$. We also use the notation $D_{k}^{f}$ for the depth of the node of ball $k$  when it is added to the tree; this could differ from $D_k$ since the ball can move during the splitting process: $D_{k}^{f}\leq D_k$. Equivalently, $D_{k}^{f}$ is the depth of the last ball in a split tree with $k$ balls. 
Let $d(v)$ denote the depth (or height) of a vertex $v$, sometimes we just write $d$ for the depth of $v$. 

Let $p(v)$ denote the parent of a vertex $v$.

 There are at least two different types of total path lengths in a tree $T$ that are of interest: the sum of all depths (distances to the root) of the balls in $T$, and the sum of all the depths of the vertices in $T$. 
We denote the former by $\Psi(T)$  and the latter by $\Upsilon(T)$.

We use the standards notations, $N(\mu,\sigma^2)$ for a normal distribution with expected value $\mu$ and variance $\sigma^2$, and Bin($m,p$) for a random variable $X$ with a binomial distribution with parameters $m$ and $p$. We also use the notation mixed binomial distribution $(X,Y)$ or for short mBin$(X,Y)$ for a binomial distribution where at least one of the parameters $X$ and  $Y$ is a random variable (the other one could be deterministic).   Let $H_n$ denote the height of a split tree with $n$ balls.

 Let
$T_{v_i},~i\in \{1,\dots,b^L\}$ be the subtrees rooted at 
depth $L=\lfloor \beta \log_b\ln{n}\rfloor$ for some constant $\beta$. For simplicity we just write $T_{i},~i\in \{1,\dots,b^L\}$ for these.
Let $d_w(v):=d(v)-d(w)$, i.e., the depth of a vertex $v$ in the subtree $T_w$. In particular we write $d_i(v):=d(v)-L$ for the depth of a vertex $v$ in the subtrees $T_i,~i\in\{1,\ldots,b^L\}$.

Recall that $V$ is a random variable with the distribution of the identically distributed components $V_i,~i\in \{1,\dots,b\}$ in the split vector $\mathcal{V}=(V_1,\dots,V_b)$. 
Let $\Delta=V_S$ be the size biased distribution of $(V_1,\dots,V_b)$, i.e., given $(V_1,\dots,V_b)$, let $\Delta=V_j$ with probability $V_j$, see \cite{devroye3}.
Let, \begin{align*} c:=\mathbf{E}(\Delta)=b\mathbf{E}(V^2),
          \end{align*} 
and
\begin{align}\label{splitdef}\mu &:=\mathbf{E}\Big(-\ln \Delta\Big)=b\mathbf{E}\Big(-V\ln{V}\Big),\nonumber\\
\sigma ^2&:=\mathbf{Var}\Big(\ln \Delta\Big)=b\mathbf{E}\Big(V\ln^2 V\Big)-\mu ^2.
\end{align} Note that the second equalities of $\mu$ and $\sigma$ imply that they are bounded. Similarly all moments of $-\ln \Delta$ are bounded.

For a given $\epsilon>0$,  we say that a vertex $v$ in $T^{n}$  is ``good''  if 
\begin{align}\label{good1}\mu^{-1}\ln{n}-\ln^{0.5+\epsilon}{n}\leq d(v)\leq \mu^{-1}\ln{n}+\ln^{0.5+\epsilon}{n},
\end{align}
and ``bad'' otherwise. We write $\mathsf{V}^{\ast}\big(T^{n}\big)$ for the set of good vertices in $T^n$, and for the number of good vertices we write $N^{*}:=|\mathsf{V}^{\ast}\big(T^{n}\big)|$.

We say that $Y_m=o_p(a_m)$ if $a_m$ is a positive number and 
$Y_m$ is a random variable such that $Y_m/a_m \stackrel{p}{\rightarrow}0$ as $m\rightarrow \infty$. 
We use two unusual types of order notation; let $a_m$ be  a positive number and $Y_m$  a random variable, by the notation $Y_m:=\mathcal O _{L^p}(a_m)$ we mean that 
 $(\mathbf{E}({Y_m}^p))^{\frac{1}{p}}\leq Ca_m$ for some constant $C$, and by the notation $Y_m:=o _{L^p}(a_m)$ we mean that
$(\mathbf{E}({Y_m}^p))^{\frac{1}{p}}/a_m\rightarrow0$.
We use the notation $\Omega_j$ for the $\sigma$-field generated by $\{n_v,~d(v)\leq j\}$.
Finally we write  $\mathscr{G}_j$ for
 the $\sigma$-field generated by the $\mathcal{V}$-vectors for all vertices $v$ with $d(v)\leq j$. 

 \subsection{A weak law and a central limit law for the depth}\label{strong}

 In \cite{devroye3} Devroye presents a weak law of large numbers and a central limit law for $D_n$ (the depth of the last inserted ball). 
If $\mathbf{P}(V=1)=0$ and $\mathbf{P}(V=0)<1$ then
\begin{align}\label{splitdepth}\frac{D_n}{\ln n}\stackrel{p}\rightarrow \mu^{-1},
\end{align}and
\begin{align}\label{expsplitdepth}\frac{\mathbf{E}(D_n)}{\ln n}\rightarrow \mu^{-1}.
\end{align}
From the following lemma it follows easily (as we explain below) that (\ref{expsplitdepth}) also holds for the average depth $D_{n}^{*}$. Recall that $D_k$ is the depth of the $k$-th ball in the tree when all $n$ balls are added.
\begin{Lemma}\label{comparingdepths}For $i \leq j$, we have $D_{i}\leq D_{j}$ in stochastic sense.
\end{Lemma}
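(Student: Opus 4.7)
By transitivity of stochastic ordering it suffices to prove that $D_i$ is stochastically smaller than $D_{i+1}$ for each $1\le i<n$. I plan to establish this by a coupling argument, decomposing $D_k=D_k^{f}+P_k$, where $D_k^f$ is the depth of ball $k$ at the moment of its own insertion and $P_k\ge 0$ is the cumulative pushdown caused by splits occurring at times $k+1,\ldots,n$. The heuristic is that a ball inserted later lands in a larger tree, and while it is subject to fewer subsequent pushdowns, the gain in insertion depth should dominate.

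First, I would show that $D_i^f$ is stochastically smaller than $D_{i+1}^f$ by coupling two ``fresh'' insertions, one into $T^{i-1}$ and one into $T^i$, using the same split vectors $\mathcal V_v$ on $S_b$ and a common sequence of descent-choice uniforms. The two descents agree vertex by vertex until they reach the leaf $u$ of $T^{i-1}$ where the first descent halts. In $T^i$ the vertex $u$ is either still a leaf (the descents can be coupled to stop together, and a further split at $u$ can also be coupled) or has already been split during ball $i$'s insertion (so the descent in $T^i$ continues past $u$ to a strict descendant). A short case analysis shows that in every scenario the insertion depth into $T^i$ is at least as large as the one into $T^{i-1}$.

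Next, I would couple the pushdowns $P_i$ and $P_{i+1}$ within a single run of the process: both balls are subjected to the same stream of later splits determined by the shared split vectors and split-selection uniforms, with ball $i$ enjoying the single extra time slot $t=i+1$. The splitting mechanism picks uniformly which of the $s+1$ balls at a just-split vertex stay there and which go to the children, so balls sharing a vertex at the moment of a split are exchangeable under that split. Combined with the insertion-depth comparison, this should yield that $D_i$ is stochastically smaller than $D_{i+1}$.

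\emph{Main obstacle.} The most delicate point is transferring the insertion-depth domination to the final depth, because ball $i$ and ball $i+1$ may occupy different vertices from time $i+1$ onward and thus experience distinct sequences of pushdowns. The cleanest route I can see is to establish a separate monotonicity lemma asserting that the distribution of the cumulative pushdown of a ball located at vertex $v$ at time $t$ is non-decreasing, in the stochastic sense, as $v$ moves to a deeper position, and then to combine it with the insertion-depth step above.
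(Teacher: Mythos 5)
Your plan diverges from the paper's proof at its crux, and the place where it diverges is exactly where you flag an unresolved obstacle, so as written the argument has a real gap. The paper does not decompose $D_k$ into insertion depth plus cumulative pushdown. It instead builds two copies $T,\widehat T$ of the split tree that are driven by identical split vectors and identical ball arrivals, and couples them so that balls $i$ and $i+1$ are at swapped positions in $T$ and $\widehat T$. Because the two trees are then indistinguishable as unlabeled ball configurations, every subsequent split acts on them identically, so the ``pushdown'' histories of ball $i$ in $T$ and ball $i+1$ in $\widehat T$ are synchronized by construction; there is nothing left to compare. In all but one sub-case this immediately yields the stronger conclusion $D_i \stackrel{d}{=} D_{i+1}$, not just a stochastic inequality. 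The one genuinely asymmetric sub-case is when balls $i$ and $i+1$ hit the same leaf $l$ with $n_l=s$ and ball $i$ is one of the $s_1$ balls sent uniformly to the children while ball $i+1$ is routed by the split vector $\mathcal V_l$. There the paper constructs a coupling under which ball $i+1$ enters a sibling subtree whose split component is at least as large as the one ball $i$ entered, matches the split vectors of the two subtrees position by position, and observes that the subtree with at least as many balls carries a ball at a corresponding position at least as deep. That is the only place an inequality (rather than an equality) has to be argued.

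Your proposed monotonicity lemma — that the future pushdown of a ball is stochastically non-decreasing as the ball's current vertex moves deeper — is neither what the paper proves nor a statement I would expect to hold at the level of generality you need. A ball that starts deeper generally sits in a subtree with a smaller expected landing probability, so it is hit by fewer later insertions and can experience \emph{less} future pushdown, not more. The monotonicity that actually works (and that the paper uses implicitly in the $s_1$ case) is monotonicity in the number of balls of the coupled subtree containing the ball, under a coupling that identifies corresponding positions in two subtrees whose split vectors agree: the ball in the subtree with $k \ge l$ balls is at least as deep as its counterpart. That is a claim about matched subtree sizes, not about raw depth, and it does not follow from your lemma as stated. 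Your first step — showing $D_i^f \le_{st} D_{i+1}^f$ by coupling a fresh insertion into $T^{i-1}$ against one into $T^i$ — is fine and is essentially the content of the paper's preliminary remark $D_i^f \stackrel{d}{=} D_{i+1}^f$ or $D_i^f \le D_{i+1}^f$ in each case. But the step that transfers this to $D_i \le_{st} D_{i+1}$ is where the work lies, and the swap coupling is the device the paper uses to make the transfer almost free; you would need to either adopt it or replace your depth-monotonicity lemma with a correctly stated and proved matched-subtree-size monotonicity lemma.
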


\begin{proof}

 We show this by showing that for an arbitrary $i\in \{1,\dots,n-1\}$, $D_i\leq D_{i+1}$, where the inequalities and equalities below are in the stochastic sense only.
 We show this by the use of coupling arguments.  

First consider two identical copies $T$ and $\widehat{T}$ of the split tree when $i-1$ balls have been added, where we let $\widehat{v}$ in $\widehat{T}$ denote the corresponding vertex of $v$ in $T$. More precisely, we consider two split trees $T$ and $\widehat{T}$ with the same split vectors in all vertices of the infinite  skeleton tree, and if a ball $k$, $k\leq i-1$, is added to $v$ in $T$ then ball $k$ is added to  $\widehat{v}$ in $\widehat{T}$.  We now assume that we add the two balls $i$ and $i+1$ to $T$ and $\widehat{T}$. 

If ball $i$ and ball $i+1$ are added to different leaves $l_1$ and $l_2$ in $T$ then in $\widehat{T}$ we let them switch positions, i.e., ball $i$ is added to $\widehat{l}_2$ and ball $i+1$ is added to $\widehat{l}_1$. (Recall the notation $D_{k}^{f}$ from Section \ref{not}.) Hence, it is obvious for reasons of symmetry that $D_{i}^{f}\stackrel{d}{=}D_{i+1}^{f}$. When the balls $\in\{i+2,\dots,n\}$ are added, we add them to the corresponding vertices in $T$ and $\widehat{T}$. Thus, the two trees are identical in the whole process except for that ball $i$ and ball $i+1$  always have switched positions in $T$ and $\widehat{T}$. Hence, by symmetry  $D_i\stackrel{d}{=}D_{i+1}$.

If ball $i$ and ball $i+1$ are added to the same leaf $l$ in $T$ then there are three different cases:

If $n_l\leq s-2$,  so that $l$ does not split when also ball $i$ and ball $i+1$ have been added, then $T$ and $\widehat{T}$ are still identical since ball $i$ and ball $i+1$ stay in $l$. When more balls are added we can again assume that ball $i$ and ball $i+1$ have switched positions in  $T$ and $\widehat{T}$ at every step of the the recursive construction until all $n$ balls are added. Hence, by symmetry $D_i\stackrel{d}{=}D_{i+1}$.

If $n_l=s-1$, so that $l$ gets $s+1$ balls when the new balls are added, $l$ splits according to the usual splitting process when ball $i+1$ is added. Again we let ball $i$ and ball $i+1$ switch positions in $T$ and $\widehat{T}$. This means that if ball $i$ is added to $v_1$ and ball $i+1$ is added to $v_2$ in ${T}$, then in $\widehat{T}$  ball $i$ is added to $\widehat{v}_2$ and ball $i+1$ is added to $\widehat{v}_1$. Thus, again by symmetry  $D_{i}^{f}\stackrel{d}{=}D_{i+1}^{f}$. By using the same type of argument as in the cases above we get $D_i\stackrel{d}{=}D_{i+1}$.

If $n_l=s$, so that $l$ in $T$ gets $s+2$ balls when the new balls are added, we let $l$ split according to the usual splitting process where $l$ keeps $s_0$ balls and sends the other balls to its children. 

If  ball $i$ is one of the $s_0$  balls in the children then it is obvious without using the coupling that $D_{i}^{f}\leq D_{i+1}^{f}$ and also $D_i\leq D_{i+1}$.

If  ball $i$ is not one of the $s_1$ balls in the children of $l$ in $T$ and ball $i$ is added to $v_1$ and ball $i+1$ is added to $v_2$, then in $\widehat{T}$ we can again assume that ball $i$ is added to $\widehat{v}_2$ and ball $i+1$ is added to $\widehat{v}_1$.
 Thus, in the stochastic sense $D_{i}^{f}\stackrel{d}{=} D_{i+1}^{f}$, and  $D_i\stackrel{d}{=} D_{i+1}$.

If ball $i$ is one of the $s_1$ balls in $T$, we use a related but not an identical type of coupling argument as in the previous cases. In this case ball $i$ is added by uniformly choosing one of the $b$ children of $l$ each with probability $\frac{1}{b}$, while ball $i+1$ is added by using the probabilities given by the components in the split vector $\mathcal{V}_l$ of $l$. Again $T$ and $\widehat{T}$ are identical until $i-1$ balls are added baring the possibility of variation in the split vectors of the vertices above the leaves as described below. 
If ball $i$ in $T$ goes to a child $v_1$ of $l$ related to a component $V_j$ in $\mathcal{V}_l$, then we add ball $i+1$ in $\widehat{T}$  to $\widehat{v}_1$ with probability $\min \{1,\frac{V_j}{1/b}\}$ and to one of the other children related to a component $V_k>1/b$ with probability $\max \{0,1-\frac{V_j}{1/b}\}$, so that the sum of the probabilities gives the right marginal distribution. Assume that ball $i$ is added to the child $v$ of $l$ in $T$ and  ball $i+1$ is added to the child  $\widehat{w}$ of $\widehat{l}$ in $\widehat{T}$.
This means that $\widehat{w}$  relates to a component of the split vector of $\widehat{l}$  at least as large as the component of the split vector of $l$ related to $v$.
Now we can assume that the split vectors in the vertices in the subtree rooted at $v$   correspond to the split vectors in the vertices in the subtree rooted at  $\widehat{w}$.
This means that we can assume that when ball number $j$ in the subtrees is added it goes to the corresponding vertex in both of the subtrees. However, note that the balls could have different labels if we consider their original label in the whole tree, since the subtree rooted in $\widehat{w}$ could have more balls than the subtree rooted in ${v}$. Thus,
as long as the subtrees have the same number of balls, new balls are added to the corresponding positions in these subtrees, and ball $i$ and ball $i+1$ are also held by vertices of corresponding positions. This construction shows that if the subtrees rooted in $\widehat{w}$ and  $v$ have $k$ and $l$ balls, respectively, where 
$k>l$, and ball $i$ in $T_{v}$ is in vertex $h$, then ball $i+1$ in $\widehat{T}$ is in a subtree of $\widehat{T}_{\widehat{w}}$ with root
corresponding to the position of $h$. This shows that in the stochastic sense $D_i\leq D_{i+1}$.

Hence, in all cases,  $D_{i}\leq D_{i+1}$ stochastically and thus for $i<j$, it follows that $D_{i}\leq D_{j}$ in stochastic sense. 
\end{proof}

This means in particular that for all  $k\leq n$, $\mathbf{E}(D_k)\leq \mathbf{E}(D_{n})$. Since the sum of $\frac{\mathbf{E}(D_k)}{n}$ for $k< \frac{n}{\ln^2 n}$ is $o(1)$, we can ignore the balls  $k< \frac{n}{\ln^2 n}$. 
We consider the balls $k\geq \frac{n}{\ln^2 n}$.  For ball $k\geq\frac{n}{\ln^2 n}$ it follows from (\ref{expsplitdepth}) that $\mathbf{E}(D_{k}^{f})\sim  \mu^{-1} \ln n$. Thus, since for all 
$k\geq \frac{n}{\ln^2 n}$, $\mathbf{E}(D_{k}^{f})\leq \mathbf{E}(D_k)\leq \mathbf{E}(D_{n})$, we get 
\begin{align}\label{expsplitdepth1}\frac{\mathbf{E}(D_n^{*})}{\ln n}\rightarrow \mu^{-1}.
\end{align}

Furthermore, see \cite[Theorem 1]{devroye3}, if $\sigma >0$,  and assuming that $V$ is not monoatomic, i.e., we don't have $\mathcal{V}\equiv \frac{1}{b}$,  
\begin{align}\label{splitdepth:2}\frac{D_n-\mu^{-1} \ln n}{\sqrt{\sigma ^2\mu^{-3}\ln n}}\stackrel{d}\rightarrow N(0,1),
\end{align}
where $N(0,1)$ denotes the standard Normal distribution and $\stackrel{d}\rightarrow$ denotes convergence in distribution. Tries are special forms of split trees with a random permutation of deterministic components $(p_1,p_2,\dots,p_b)$ and therefore not as random as many other examples. (In the literature tries have also been treated separately to other random trees of logarithmic height.) Of all the most common examples of split trees only some special cases of tries (the symmetric tries and symmetric digital search trees) have a monoatomic distribution of $V$.   From  (\ref{splitdepth:2}) it follows that ``most'' nodes lie at $\mu^{-1} \ln n+\mathcal{O}(\sqrt{\ln n})$. 

\subsection{Subtrees}\label{Subtrees}
For the split tree  where the number of balls $n>s$, there are $s_0$ balls in the root vertex and the cardinalities of the 
$b$ subtrees are distributed as $(s_1,\dots, s_1)$ plus a multinomial vector $(n-s_0-bs_1,V_1,\dots, V_b)$.
Thus, conditioning on the random $\mathcal{V}$ -vector that belongs to the root, the subtrees rooted at the children have cardinalities close to $nV_1,\dots, nV_b$.
This is often used in applications of random binary search trees. In particular, we used this fact frequently in \cite{holmgren}.
 ``The split tree generating algorithm'' described above, and the fact that a mBin$(X,p_1)$ in which $X$ is  Bin$(m,p_2)$ is distributed as a  Bin$(m,p_1p_2)$, give in a stochastic sense, an upper bound on the number of balls $n_v$ in a subtree rooted at a vertex $v$:
Let $v$ be a vertex at depth $d$, conditioning on $\mathscr{G}_d$  (i.e., the $\sigma$-field generated by the $\mathcal{V}$ vectors for all vertices $v$ with $d(v)\leq d$), gives
\begin{align}\label{binomial} n_v &\leq \mathrm{Bin}(n,\prod_{j=1}^{d}W_{j,v})+\mathrm{Bin}(s_1,\prod_{j=2}^{d}W_{j,v})\nonumber\\&+\mathrm{Bin}(s_1,\prod_{j=3}^{d}W_{j,v})+\dots + \mathrm{Bin}(s_1,W_{d,v})+s_1,
\end{align}
where $W_{j,v},j\in\{1,\dots d\}$ are i.i.d.\ random variables given by the split vectors associated with the nodes in the unique path from $v$ to the root. This means in particular that $W_{j,v}\stackrel{d}{=}V$. However, we note that the terms in (\ref{binomial}) are not independent. Also observe that $\mathscr{G}_d$ is equivalently the $\sigma$-field generated by $W_{j,v},j\in\{1,\dots ,d\}$ for all $v$ with $d(v)=d$. Similarly, we also have a lower bound for $n_v$, i.e., for $v$ at depth $d$, conditioning on $\mathscr{G}_d$ in stochastic sense,
\begin{align}\label{binomial2} n_v &\geq \mathrm{Bin}(n,\prod_{j=1}^{d}W_{j,v})-\mathrm{Bin}(s,\prod_{j=2}^{d}W_{j,v})\nonumber\\&-\mathrm{Bin}(s,\prod_{j=3}^{d}W_{j,v})-\dots - \mathrm{Bin}(s,W_{d,v});
\end{align}
we can replace the term $s$ by $s_0+bs_1\leq s$ for a sharper bound. As in (\ref{binomial}) the terms in (\ref{binomial2}) are not independent.

Recall that for a Bin$(m,p)$ distribution, the expected value is $mp$ and the variance is $mp(1-p)$.
Thus,  Chebyshev's inequality applied to the dominating term  Bin$(n,\prod_{j=1}^{d}W_{j,v})$ in (\ref{binomial}) gives that $n_v$ for $v$ at depth $d$ is close to
\begin{align}\label{sub} M_{v}^{n}:=nW_{1,v}W_{2,v} \dots W_{d,v}.
\end{align} 
More precisely by using (\ref{binomial}) and (\ref{binomial2}), the Chebyshev and Markov inequalities give for $v$ with $d(v)=d$, that for large $n$,
\begin{align}\label{iliada1,1}&\mathbf{P}\big(\mid n_v-n\prod_{j=1}^{d}W_{j,v}\mid > n^{0.6}\big)\leq 4\dfrac{\mathbf{E}\Big(\mathbf{Var}\Big(\mathrm{Bin}(n,\prod_{j=1}^{d}W_{j,v})\Big|\mathscr{G}_d\Big)\Big)}{n^{1.2}}\nonumber\\+&
4\dfrac{\mathbf{E}\Big(\mathbf{E}\Big(\mathrm{Bin}(s,\prod_{j=2}^{d}W_{j,v})+\mathrm{Bin}(s,\prod_{j=3}^{d}W_{j,v})+\dots +s\Big|\mathscr{G}_d\Big)\Big)}{n^{0.6}}\nonumber\\&~~~~~~~~~~~~~~~~~~~~~~~~~~~~~~~~~~~~~~~~\leq\frac{4nb^{-d}}{n^{1.2}}+\frac{\sum_{k=1}^{\infty}4sb^{-k}}{n^{0.6}}
\leq\frac{1}{n^{0.1}}.\end{align}
Since the $n_v$'s (conditioned on the split vectors) for all $v$ at the same depth are identically distributed, we sometimes skip the vertex index of $W_{j,v}$ and just write $W_j$.

\subsection{Renewal Theory}
Renewal theory is a widely used branch of probability theory that generalizes Poisson processes to arbitrary holding times.
A classic in this field is Feller \cite{feller} on recurrent events.
First we recollect some standard notation.
Let $X_0=0$ a.s.. Let $X_k,~k\geq 1$, be i.i.d.\ nonnegative random variables distributed as $X$ and let $S_m,~m\geq 1$, be the partial sums. Let $F$ denote the distribution function of $X$, and let $F_m$ be the distribution function of $S_m,~m\geq0$. Thus, for $x\geq0$,
\begin{align*} F_0(x)=1,~~F_1(x)=F(x),~~
F_m(x)=F^{m^\ast}(x),
 \end{align*}
i.e., $F_m$ equals the $m$-fold convolution of $F$ itself.
The ``renewal counting process''  $\{\mathcal{N}(t),~t\geq 0\}$ is defined by 
\begin{align*} \mathcal{N}(t):=\max \{m:S_m\leq t\},
\end{align*} which one can think of as the number of renewals before time $t$ of an object with a lifetime distributed as the random variable $X$. In the specific case when $X\stackrel{d}= \mathrm{Exp}(\lambda)$, 
$\{\mathcal{N}(t),~t\geq 0\}$ is a ``Poisson process''.
An important well studied function is the so called ``standard renewal function'' defined as 
\begin{align}\label{stand}V(t):=\sum_{m=0}^{\infty}F_m(t),
 \end{align}
 which one can easily show is equal to $\mathbf{E}(\mathcal{N}(t))$.
The renewal function $V(t)$ satisfies the so called renewal equation 
\begin{align*}V(t)=1+(V\ast dF)(t),~~t\geq 0.
 \end{align*}
For a broader introduction to renewal theory, see e.g.\  \cite{asmussen}, \cite{feller1}, \cite{feller2} and \cite{gut2}.
One of the main purposes of this study is to introduce renewal theory in the context of split trees.
Recall from (\ref{sub}) in Section \ref{Subtrees}
that the subtree size $n_v$ for $v$ at depth $k$, is close to
$nW_{1}W_{2}\dots W_{k}$, where $W_{j},~j\in\{1,\dots ,k\}$, are independent random variables distributed as $V$. 
Now let $Y_{k}:=-\sum_{j=1}^{k}\ln {W_{j}}$, and for simplicity we also denote the summands $\varpi_r:=-\ln {W_{j}}$. Note that $nW_{1}W_{2}\dots W_{k}=ne^{-Y_{k}}$.
Recall that in a binary search tree, the split vector $\mathcal{V}=(V_1,V_2)$ is distributed as $(U,1-U)$ where $U$ is a uniform $U(0,1)$ random variable. For this specific case of a split tree the sum $Y_{k}$, (where $W_{j},~j\in\{1,\dots ,k\}$, in this case are i.i.d.\ uniform $U(0,1)$ random variables) is distributed as a $\Gamma(k,1)$ random variable.
This fact is used by, e.g., Devroye in \cite{devroye2} to determine the height of a binary search tree.
For general split trees there is no simple common distribution function of $\sum_{j=1}^{k} \ln W_j$, instead  renewal theory can be used. 

Let \begin{align*}\nu_k(t):=b^k\mathbf{P}(Y_k\leq t).
\end{align*}
We define the renewal function 
\begin{align}\label{renewal function}
 U(t):=\sum_{k=1}^{\infty}\nu_k(t).
\end{align}
We also denote $\nu(t):=\nu_1(t)=b\mathbf{P}(\varpi_r\leq t)$.
For $U(t)$ we obtain the following renewal equation
\begin{align}\label{renewal equation}
 U(t)&=\nu(t) +\sum_{k=1}^{\infty}(\nu_k\ast d\nu)(t)=\nu(t)+(U\ast d\nu)(t).
\end{align}

\section{Main Results}

In this section we present the main theorems of this work.

\begin{ass}\label{assumption2}   
In this work we assume as in Section \ref{strong} that $\mathbf{P}(V=1)=0$, and we also assume for simplicity  that $\mathbf{P}(V=0)=0$ and that $-\ln {V}$ is non-lattice. 
 \end{ass}
The reason for the non-lattice assumption (A1) is that we use renewal theory and there it often becomes necessary to distinguish between lattice and non-lattice distributions. Note that the assumption that $V$ is not monoatomic in Section \ref{strong} is included in the assumption that $-\ln {V}$ is non-lattice. Again of the common split trees only for some special cases of tries and digital search trees does $-\ln {V}$ have a lattice distribution.
Our first main result is on the relation between the number of vertices $N$ (recall that this is a random variable) and the number of balls $n$.
\begin{thm}\label{assumption1}
There is a constant $\alpha$ depending on the type of split tree such that 
\begin{align}\label{vertexball}\mathbf{E}(N)=\alpha n+o\big(n\big),
\end{align}
 and 
\begin{align}\label{vertexball2}\mathbf{Var}(N)=o\big(n^{2}\big).
\end{align}
 \end{thm}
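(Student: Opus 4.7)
The plan is to express $\mathbf{E}(N) = \sum_{k \ge 0} b^{k}\,\mathbf{P}(n_{v}>0 \mid d(v)=k)$ by symmetry of the skeleton $S_{b}$. Using the binomial sandwich \eqref{binomial}--\eqref{binomial2} together with the concentration estimate \eqref{iliada1,1}, for $v$ at depth $k$ we have $n_{v}$ close to $\mathrm{Bin}(n,e^{-Y_{k}})$ with $Y_{k}=-\sum_{j=1}^{k}\ln W_{j,v}$, so that $\mathbf{P}(n_{v}>0\mid\mathscr G_{k})\approx 1-(1-e^{-Y_{k}})^{n}$. The expected vertex count then reduces to an integral of this Gumbel-type kernel against the renewal measure $dU$ from \eqref{renewal function}.

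To extract the leading constant I would apply Cramer's exponential tilt: since $b\mathbf{E}(V)=1$, the measure $d\tilde F(x)=be^{-x}\,dF(x)$ is a proper probability measure on $[0,\infty)$ with mean $\mu=-b\mathbf{E}(V\ln V)$ (see \eqref{splitdef}), non-lattice by assumption (A1). The identity $d\tilde U(y)=e^{-y}\,dU(y)$ links $U$ to the standard renewal measure of a walk with step mean $\mu$, so Blackwell's theorem yields $\tilde U(y)\sim y/\mu$; integration by parts then gives $U(t)\sim e^{t}/\mu$. The key renewal theorem applied to the tilted integral $\int \phi(y-\ln n)\,e^{y}\,d\tilde U(y)$ produces $\mathbf{E}(N)=\alpha n+o(n)$ with $\alpha=\tfrac{1}{\mu}\int h(z)\,dz$ for a kernel $h$ derived from $\phi$ and the corrections in \eqref{binomial}--\eqref{binomial2}. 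The main technical obstacle is that the naive limiting kernel $(1-e^{-e^{-z}})e^{z}$ does not decay as $z\to+\infty$ and so is not directly Riemann integrable on $\mathbb{R}$; to circumvent this I would localize the sum to the ``good'' range $d(v)\in[\mu^{-1}\ln n\pm\ln^{0.5+\epsilon}n]$ of \eqref{good1}, bounding the outside contribution by a union bound combined with the height estimate implicit in \eqref{splitdepth:2}, and work with the restricted kernel on the remaining compact window, where the key renewal theorem applies.

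For the variance bound $\mathbf{Var}(N)=o(n^{2})$, I would use the subtree decomposition at depth $L=\lfloor\beta\log_{b}\ln n\rfloor$: writing $N=O((\ln n)^{\beta})+\sum_{i=1}^{b^{L}}N_{v_{i}}$ and conditioning on $\mathscr G_{L}$ together with the subtree sizes $n_{v_{i}}$, the subtree counts $N_{v_{i}}$ are independent. The total variance law gives
\begin{equation*}
\mathbf{Var}(N)\leq\mathbf{E}\Bigl(\sum_{i}\mathbf{Var}(N_{v_{i}}\mid n_{v_{i}})\Bigr)+\mathbf{Var}\Bigl(\sum_{i}\mathbf{E}(N_{v_{i}}\mid n_{v_{i}})\Bigr),
\end{equation*}
and applying the trivial bound $\mathbf{Var}(N_{v_{i}}\mid n_{v_{i}})\le n_{v_{i}}^{2}$ together with $\mathbf{E}(N_{v_{i}}\mid n_{v_{i}})=\alpha n_{v_{i}}+o(n_{v_{i}})$ from the first part reduces both terms to quantities controlled by $\mathbf{E}(\sum_{i}n_{v_{i}}^{2})$ and $\mathbf{Var}(\sum_{i}n_{v_{i}})$, each of which is $o(n^{2})$ by the binomial sandwich \eqref{binomial}--\eqref{binomial2} at depth $L$ (using in particular that $\max_{i}n_{v_{i}}=o(n)$ with high probability from \eqref{iliada1,1}).
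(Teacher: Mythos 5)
For the expectation part, the step that breaks down is the replacement of $\mathbf{P}(n_{v}>0\mid\mathscr{G}_{k})$ by the binomial tail $1-(1-e^{-Y_{k}})^{n}$. The sandwich (\ref{binomial})--(\ref{binomial2}) together with the concentration (\ref{iliada1,1}) controls $n_{v}$ around $M_{v}^{n}=ne^{-Y_{k}}$ only on the bulk scale; it gives you nothing sharp about the rare event $\{n_{v}\geq 1\}$ in the regime $ne^{-Y_{k}}\ll 1$, which is precisely the regime that produces the boundary of the tree. There $\mathbf{P}(\mathrm{Bin}(n,e^{-Y_{k}})\geq 1)\approx ne^{-Y_{k}}$ is a gross overestimate of $\mathbf{P}(n_{v}>0)$: the true event forces every proper ancestor of $v$ to have accumulated more than $s$ balls and split, and the process is absorbing at $0$, whereas the independent Bernoulli sum can resurrect. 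Quantitatively, $\sum_{v:d(v)=k}\mathbf{E}[ne^{-Y_{k}}]=n$ for every $k$, so the binomial kernel gives a divergent series. You correctly observe that the limiting tilted kernel $(1-e^{-e^{-z}})e^{z}$ fails to be directly Riemann integrable, but localizing to the good range of (\ref{good1}) does not repair this: that range has width $\Theta(\ln^{0.5+\epsilon}n)$, so it is not a fixed compact window to which the key renewal theorem applies, and even inside it the binomial kernel contributes order $n\ln^{0.5+\epsilon}n$ from the half $d(v)>\mu^{-1}\ln n$, which already exceeds the $O(n)$ you want to prove. There is simply no explicit kernel $h$ with $\alpha=\mu^{-1}\int h$ that can be read off (\ref{binomial})--(\ref{binomial2}).

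The paper sidesteps this difficulty by never attempting to identify $\mathbf{P}(n_{v}>0)$. It stops each branch at the first level where $M_{v}^{n}<B$, shows via Corollary~\ref{lem6,1} and Lemma~\ref{lem16} that almost all vertices lie in the resulting random family of small subtrees $T_{r,B}$ with $n_{r}\approx M_{r}^{n}$, uses renewal theory (the same exponential tilt and key renewal theorem you invoke) only in Lemma~\ref{lem12} to count how many stopping roots $r$ fall in each small cardinality window, and then feeds each $\mathbf{E}(N_{r})$ back through the unknown ratio $a_{x}=\mathbf{E}(N^{(\lfloor x\rfloor)})/\lfloor x\rfloor$. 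The conclusion is the Cauchy-sequence argument (\ref{limsup})--(\ref{limsup.1.2}): $\mathbf{E}(N)/n$ is shown to be Cauchy, hence convergent, without any closed-form expression for $\alpha$. That the paper proves only existence of the limit, not a formula, is itself a signal that a direct kernel computation is not available here. Your outline for (\ref{vertexball2}) essentially reproduces the paper's argument (conditional independence of the subtree counts, law of total variance, and a bound on $\mathbf{E}\sum_{i}n_{i}^{2}$; the paper conditions at depth $c\ln n$ rather than $\lfloor\beta\log_{b}\ln n\rfloor$, but both give $o(n^{2})$), yet it consumes $\mathbf{E}(N_{v_{i}}\mid n_{v_{i}})=\alpha n_{v_{i}}+o(n_{v_{i}})$ as input, so the gap in the first part propagates into the variance proof as well.
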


Recall that there is a central limit law for the depth $D_n$ in (\ref{splitdepth:2}) so that most vertices are close to $\frac{\ln n}{\mu}+\mathcal{O}\Big(\sqrt{\ln n}\Big)$, our next result sharpens this fact.
Recall that for any constant $\epsilon>0$, we say that a vertex $v$ in $T^{n}$  is ``good''  if 
\begin{align*}\mu^{-1}\ln{n}-\ln^{0.5+\epsilon}{n}\leq d(v)\leq \mu^{-1}\ln{n}+\ln^{0.5+\epsilon}{n},
\end{align*} and ``bad'' otherwise. 
\begin{thm}\label{lemma1} 
For any choice of $\epsilon>0$, the number of bad nodes in $T^{n}$  is bounded by $\mathcal O_{L^{1}} \Big(\frac {n}{\ln^{k}{n}}\Big)$ for any constant $k$.
\end{thm}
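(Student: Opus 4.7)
My plan is to decompose $N-N^{\ast}=(\text{deep})+(\text{shallow})$ where a vertex $v\in T^{n}$ is \emph{deep} if $d(v)>D^{+}:=\mu^{-1}\ln n+\ln^{0.5+\epsilon}n$ and \emph{shallow} if $d(v)<D^{-}:=\mu^{-1}\ln n-\ln^{0.5+\epsilon}n$, and to bound each expectation separately by $O(n/\ln^{k}n)$ for arbitrary $k$. The unifying tool is the Cram\'er/size-biased tilt $\tilde F(dx)=be^{-x}F(dx)$ already built into the definitions of $\mu$ and $\sigma^{2}$: under $\tilde F$ the iid summands are distributed as $-\ln\Delta$ with mean $\mu$, variance $\sigma^{2}$, and an mgf finite in a neighbourhood of $0$ (because $V\in(0,1]$ gives $\mathbf{E}(V^{1-\lambda})\leq 1$ for small $\lambda>0$), so Bernstein/Chernoff gives $\mathbf{P}(|\tilde Y_{d}-\mu d|>t)\leq 2e^{-ct^{2}/d}$ in the moderate-deviation range. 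The transfer identity $b^{d}\mathbf{E}(f(Y_{d}))=\mathbf{E}(e^{\tilde Y_{d}}f(\tilde Y_{d}))$, immediate from $\tilde f_{Y_{d}}(y)=b^{d}e^{-y}f_{Y_{d}}(y)$, translates sums over the $b^{d}$ vertices of $S_{b}$ at depth $d$ into expectations along a single tilted descending path.

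\textbf{Shallow.} I fix $v\in S_{b}$ at depth $d$ and use (\ref{binomial}) together with Markov to write $\mathbf{P}(n_{v}\geq 1\mid\mathscr{G}_{d})\leq c\min(1,ne^{-Y_{d}(v)})+O(s)$, where the $O(s)$ correction collects the lower-order binomial terms and when summed over the $b^{d}$ vertices contributes a geometric-in-$d$ tail controlled by $\mathbf{E}(N)=O(n)$ from Theorem \ref{assumption1}. The main term after the tilt identity becomes $\mathbf{E}(\min(e^{\tilde Y_{d}},n))$, which I split at $y_{0}:=\ln n-\tfrac{\mu}{2}\ln^{0.5+\epsilon}n$: the contribution of $\{\tilde Y_{d}\leq y_{0}\}$ is at most $ne^{-(\mu/2)\ln^{0.5+\epsilon}n}$, while the contribution of $\{\tilde Y_{d}>y_{0}\}$ is at most $n\,\mathbf{P}(\tilde Y_{d}-\mu d\geq\tfrac{\mu}{2}\ln^{0.5+\epsilon}n)$, which for $d<D^{-}$ (so $\mu d<\ln n-\mu\ln^{0.5+\epsilon}n$) is killed by the Bernstein bound to $\leq ne^{-c'\ln^{2\epsilon}n}$. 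Both pieces are $o(n/\ln^{k+1}n)$ for every $k$, and summing the $O(\ln n)$ shallow depths absorbs a harmless $\ln n$ factor.

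\textbf{Deep.} The direct renewal bound would be loose here (it yields only $\lesssim n$ expected vertices per depth level), so I switch to an ancestor-counting reduction: every vertex of $T^{n}$ at depth $>D^{+}$ is a strict ancestor of some ball held below it, and conversely each ball $k$ at depth $D_{k}$ contributes exactly $(D_{k}-D^{+})_{+}$ such strict ancestors, so $\#\{v\in T^{n}:d(v)>D^{+}\}\leq\sum_{k=1}^{n}(D_{k}-D^{+})_{+}$. Lemma \ref{comparingdepths} upgrades this to $\mathbf{E}(\#\text{deep})\leq n\,\mathbf{E}((D_{n}-D^{+})_{+})$, and with a high-moment estimate $\mathbf{E}((D_{n}-\mu^{-1}\ln n)^{2m})=O((\ln n)^{m})$ Chebyshev returns $\mathbf{P}(D_{n}>D^{+}+t)\leq C_{m}(\ln n)^{m}/(\ln^{0.5+\epsilon}n+t)^{2m}$; summing over $t\geq 0$ with $m\geq k/(2\epsilon)$ yields $\mathbf{E}((D_{n}-D^{+})_{+})=O(\ln^{-k}n)$ and hence the claimed $O(n/\ln^{k}n)$ bound on the deep count.

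\textbf{Main obstacle.} The nontrivial probabilistic input is the high-moment bound on $D_{n}$ in the deep half, since the CLT (\ref{splitdepth:2}) is distributional and gives no polynomial-in-$\ln n$ tail decay on its own. The cleanest remedy inside this framework is to apply Burkholder--Rosenthal to the tilted walk $\tilde Y-\mu d$, whose increments have all moments by the finite-mgf property above, and to read off the moments of $D_{n}$ as those of the first-passage time of $\tilde Y$ past $\ln n$. Beyond that the argument is book-keeping: the $s_{0},s_{1},s$ corrections in (\ref{binomial})--(\ref{binomial2}) only shift $\ln n$ to $\ln n+O(1)$ inside each $\nu_{d}(\cdot)$, affecting no exponent, and the $O(\ln n)$ summation over bad depths costs only a single $\ln n$ factor that is dominated by the polylog slack.
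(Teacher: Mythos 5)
Your shallow argument and your deep argument are at quite different distances from what the paper actually does, so let me separate them.

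The shallow half is, at bottom, the same moderate-deviation calculation the paper performs, just phrased via an explicit Cram\'er/size-biased tilt rather than the paper's fractional moments $\mathbf{E}\big(n_v^{(1-\epsilon(n))/2}(n_v-1)^{(1-\epsilon(n))/2}\big)$: the paper's Taylor expansion of $b\,\mathbf{E}(W^{1-\epsilon(n)})$ is exactly the cumulant expansion of $-\ln\Delta$ that drives your Bernstein bound. Your bound $b^d\mathbf{E}\big(\min(1,ne^{-Y_d})\big)=\mathbf{E}\big(\min(e^{\tilde Y_d},n)\big)$ split at $y_0=\ln n-\tfrac{\mu}{2}\ln^{0.5+\epsilon}n$ is clean and gives the right $ne^{-c\ln^{2\epsilon}n}$. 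The one place you are looser than the paper is the ``$O(s)$ correction'': for $s_1>0$ the terms $\mathrm{Bin}(s_1,\prod_{i\geq j}W_i)+s_1$ in (\ref{binomial}) are not an $O(s)$ error to the \emph{probability} $\mathbf{P}(n_v\geq 1\mid\mathscr{G}_d)$ (the terminal $+s_1$ alone would make that probability $1$), and the geometric tail is not simply absorbed by $\mathbf{E}(N)=O(n)$, since $O(n)$ is $k$ logs too large. The paper handles this with a genuine reduction: prove the $s_1=0$ case first (where (\ref{binomial}) has only the leading binomial), then ``repay'' at most $s_1 d$ extra balls per subtree and argue they create at most $\mathcal{O}_{L^1}(n/\ln^k n)$ extra nodes, with a separate Markov argument killing the depths $d\gg\ln n$. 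You would need the same or a comparable reduction.

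The deep half is where you genuinely depart from the paper. The paper stays on the depth-profile side, bounding $\sum_{d\geq D^+}b^d\mathbf{E}\big(n_v^{(1+\epsilon(n))/2}(n_v-1)^{(1+\epsilon(n))/2}\big)$ via Lyapounov and the same Taylor expansion, yielding $\mathcal{O}(ne^{-B\ln^{2\epsilon}n})$. Your ancestor-counting identity $\#\{v:d(v)>D^+,\,n_v>0\}\leq\sum_{v:d(v)>D^+}n_v=\sum_k(D_k-\lceil D^+\rceil)_+$ followed by Lemma \ref{comparingdepths} (stochastic monotonicity passed through the monotone map $x\mapsto(x-D^+)_+$) gives $\mathbf{E}(\#\text{deep})\leq n\,\mathbf{E}\big((D_n-D^+)_+\big)$; this is slick and correct. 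But it now hinges entirely on a high-moment bound $\mathbf{E}\big(|D_n-\mu^{-1}\ln n|^{2m}\big)=O_m\big((\ln n)^m\big)$ for arbitrarily large $m$, and this is not available off the shelf in the paper: the paper's own proof of Theorem \ref{lemma} establishes only the $L^{3}$-type bound (\ref{uni}) (i.e.\ $\sup_n\mathbf{E}|Z_n^2|^{3/2}<\infty$), which would give $\mathbf{E}(D_n-D^+)_+=O(\ln^{-\epsilon'}n)$ for some small $\epsilon'$, not $O(\ln^{-k}n)$. Your proposed remedy (Marcinkiewicz--Zygmund/Burkholder--Rosenthal on the tilted walk and a first-passage moment bound) is the right idea and in fact exactly what the paper does at order $4$ in (\ref{normal})--(\ref{limitlaw2}); pushing it to order $2m+2$ and redoing (\ref{uni}) with the power $m$ in place of $3/2$ would give it. But you should note that $D_n$ is not literally a first-passage time of the tilted walk: the corrections in (\ref{larger event})--(\ref{larger event2}), in particular the height term $\mathbf{P}(H_\beta>\beta)$ and the mixed-binomial fluctuations around $n(u_k)$, all have to be reworked at the higher moment scale, and the constants $B_q$ and $C_m$ grow with $m$. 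So there is no circularity, and the route is plausible, but the high-moment estimate is a real lemma that needs its own proof rather than a one-line appeal to Burkholder--Rosenthal. A final small point: the paper's exponent-level bound $\mathcal{O}(ne^{-B\ln^{2\epsilon}n})$ (stated as a remark after Theorem \ref{lemma1}) is strictly stronger than the polynomial-in-$\log$ decay your deep argument yields; your route buys conceptual elegance at the cost of giving only the weaker polynomial-log conclusion, which is still enough for the theorem as stated.
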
 

In the third main result we sharpen the limit laws in (\ref{expsplitdepth}) and (\ref{expsplitdepth1}) for the expected value of the depth of the last ball $D_n$ and the average depth $D_n^{*}$. We also find the first asymptotic of the variances of the $k$:th ball $D_k$ for all $k$, $\frac{n}{\ln n}\leq k \leq  n$.
\begin{thm}\label{lemma} 
For the expected value of the depth of the last ball we have 
\begin{align}\label{expsplitdepth2} \dfrac{\mathbf{E}(D_n)-\mu^{-1}\ln n}{\sqrt{\ln n}}\longrightarrow 0,
 \end{align} 
and the same result holds for the average depth $D_n^{*}$, i.e., 
\begin{align}\label{expsplitdepth3} \dfrac{\mathbf{E}(D_n^{*})-\mu^{-1}\ln n}{\sqrt{\ln n}}\longrightarrow 0.
 \end{align} 
Furthermore, for the variance of the depth of the $k$:th ball we have that for all $\frac{n}{\ln n}\leq k \leq  n$,  
\begin{align}\label{varsplitdepth2} \dfrac{\mathbf{Var}(D_k)}{\ln n}\longrightarrow \sigma^2\mu^{-3}.
 \end{align}
\end{thm}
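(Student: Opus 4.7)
The plan is to derive all three assertions from the central limit law (\ref{splitdepth:2}) by upgrading convergence in distribution to convergence of moments. By standard arguments it suffices to establish uniform integrability of $X_n := (D_n - \mu^{-1}\ln n)/\sqrt{\ln n}$ in $L^{2+\delta}$ for some $\delta > 0$; then $\mathbf{E}(X_n) \to \mathbf{E}(N(0,\sigma^2\mu^{-3})) = 0$ yields (\ref{expsplitdepth2}), and $\mathbf{E}(X_n^2) \to \sigma^2\mu^{-3}$ yields (\ref{varsplitdepth2}) after subtracting $(\mathbf{E}(X_n))^2 = o(1)$.

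The key technical input I would establish is the moment bound $\mathbf{E}(|D_n - \mu^{-1}\ln n|^p) = \mathcal{O}(\ln^{p/2 + \epsilon p} n)$ for some $p > 2$, via the good/bad decomposition from Section \ref{not}. On the good event, the integrand is deterministically at most $\ln^{(0.5+\epsilon)p} n$, which is well within the target for $\epsilon$ sufficiently small. On the bad event I would apply Cauchy--Schwarz with, on one factor, the a.s.\ bound $D_n \le H_n$ combined with polynomial moment control $\mathbf{E}(H_n^{2p}) = \mathcal{O}(\ln^{2p} n)$ (compatible with the $\log n$ height of split trees), and, on the other factor, a tail estimate $\mathbf{P}(D_n \text{ is bad}) = o(\ln^{-K} n)$ for every $K$, derived from a Chernoff bound on the renewal walk $Y_d = -\sum_{j=1}^d \ln W_j$ governing the path of the last inserted ball. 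With uniform integrability of $X_n$ in hand, (\ref{expsplitdepth2}) and (\ref{varsplitdepth2}) follow; for (\ref{varsplitdepth2}) the same analysis is applied at each $k$ with $n/\ln n \le k \le n$, using $\ln k = \ln n + o(\sqrt{\ln n})$ throughout this range. For (\ref{expsplitdepth3}) I would write $\mathbf{E}(D_n^*) = n^{-1}\sum_{k=1}^n \mathbf{E}(D_k)$ and split at $k_0 = \lceil n/\ln^2 n\rceil$: by Lemma \ref{comparingdepths}, the sum for $k \le k_0$ contributes at most $(k_0/n)\mathbf{E}(D_n) = \mathcal{O}(1/\ln n) = o(\sqrt{\ln n})$, while for $k > k_0$ the sandwich $D_k^f \le D_k$ (pointwise) and $D_k \le D_n$ in stochastic sense (Lemma \ref{comparingdepths}), combined with (\ref{expsplitdepth2}) applied at $k$ (to $D_k^f$) and at $n$ (to $D_n$), pins $\mathbf{E}(D_k) = \mu^{-1}\ln n + o(\sqrt{\ln n})$ uniformly in $k_0 \le k \le n$; averaging gives (\ref{expsplitdepth3}).

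The main obstacle is the tail estimate $\mathbf{P}(D_n \text{ is bad}) = o(\ln^{-K} n)$ for every $K$. Theorem \ref{lemma1} controls only the \emph{expected number} of bad vertices in $T^n$, not the probability that a \emph{specific} ball lies at a bad depth, and since $D_n$ is stochastically the deepest ball in the tree (by Lemma \ref{comparingdepths}) its upper tail cannot be read off directly from Theorem \ref{lemma1}. Securing a sharp tail at the scale $\ln^{0.5+\epsilon}n$ requires the full renewal-theoretic large-deviation analysis of $Y_d$ introduced in this paper, since a naive union bound over the $b^d$ vertices at level $d$ would fail without exponentially small tail estimates on the renewal walk.
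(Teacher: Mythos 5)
Your overall strategy is the paper's: derive~(\ref{expsplitdepth2}) and~(\ref{varsplitdepth2}) from the central limit law~(\ref{splitdepth:2}) by proving uniform integrability of the normalized depth, and obtain~(\ref{expsplitdepth3}) by the sandwich $D_k^f\le D_k\le D_n$ from Lemma~\ref{comparingdepths} after discarding the first $n/\ln^2 n$ balls. So far so good, and your analysis of which step is genuinely hard is also right: the tail of $D_n$ at scale $\ln^{0.5+\epsilon}n$ does not follow from Theorem~\ref{lemma1}, precisely because $D_n$ is stochastically the deepest ball.

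However, the way you propose to get that tail has a concrete error. You invoke ``a Chernoff bound on the renewal walk $Y_d=-\sum_{j=1}^d\ln W_j$ governing the path of the last inserted ball,'' with $W_j\stackrel{d}{=}V$. That walk does not govern the path of any particular ball; it governs the subtree sizes $n_v$ at a \emph{fixed} vertex $v$ at depth $d$, and it is what the paper uses for vertex-counting (Corollary~\ref{lem6,1}, Theorem~\ref{lemma1}). When a ball falls down the tree it selects the $i$-th child with probability $V_i$, so the split-vector component seen along the ball's path is \emph{size-biased}: the relevant increments are $-\ln\Delta_j$ with $\Delta_j\stackrel{d}{=}\Delta=V_S$, and the relevant drift and variance are $\mu=\mathbf{E}(-\ln\Delta)$ and $\sigma^2=\mathbf{Var}(\ln\Delta)$, not $\mathbf{E}(-\ln V)$ and $\mathbf{Var}(\ln V)$. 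Using the $V$-walk to control $\mathbf{P}(D_n\text{ bad})$ would give a bound centered at the wrong depth. The paper's proof works with the correct walk via Devroye's infinite random path $u_1,u_2,\ldots$ and the coupling that makes $D_n$ stochastically dominated by the distance to the unique leaf $u^*$ on that path, then splits the tail as $\mathbf{P}(D_n>k+\beta)\le\mathbf{P}(n(u_k)>\beta)+\mathbf{P}(H_\beta>\beta)$ (and similarly on the other side), where $n(u_k)$ is controlled through the mixed binomial bound with $\Delta_j$-products and Marcinkiewicz--Zygmund; the correction $\mathbf{P}(H_\beta>\beta)$ handles the overshoot from the first-passage level of the $\Delta$-walk to the actual position of the ball.

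A secondary gap: you also rely on a height moment bound $\mathbf{E}(H_n^{2p})=\mathcal{O}(\ln^{2p}n)$ to close the Cauchy--Schwarz step on the bad event, but you neither prove it nor cite a usable source; Lemma~\ref{lem11} only gives moments of $N$, which is far too weak ($H_n\le N$ gives only $\mathbf{E}(H_n^2)=\mathcal{O}(n^2)$). In the paper this issue does not arise because the decomposition~(\ref{larger event}) bounds the tail of $D_n$ directly without needing a global height moment; the term $\mathbf{P}(H_\beta>\beta)$ is for a subtree of size $\beta=\mathcal{O}(\ln^{0.2}n)$ only, and is estimated by the Lyapounov/binomial-moment argument in (\ref{mittag1})--(\ref{height}). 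To repair your proposal you would need to (i) replace the $V$-walk by the size-biased $\Delta$-walk and set up the infinite-path coupling, and (ii) either prove the polynomial moment bound for $H_n$ (e.g., by summing exponential tail bounds on the number of nonempty vertices at depths $d\ge C\ln n$, as in Remark~\ref{rem1}), or avoid it as the paper does.
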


We complete this section by stating two corollaries of Theorem \ref{lemma}. Recall that we write $\mathsf{V}^{\ast}\big(T^{n}\big)$ for the set of good vertices in $T^{n}$, i.e., those with depths that belong to the strip in (\ref{good1}).
\begin{cor}\label{cor1} Summing over all vertices give 
 \begin{align}\label{branoder22}
 \mathbf{E}\Big(\sum_{v\in \mathsf{V}\big(T^{n}\big)}(d(v)-\mu^{-1}\ln{n})^2\Big)=\alpha n \mu^{-3}\sigma^2\ln n+o(n\ln n).
\end{align}
For the good vertices we also have 
 \begin{align}\label{branoder2}
 \mathbf{E}\Big(\sum_{v\in \mathsf{V}^{\ast}\big(T^{n}\big)}(d(v)-\mu^{-1}\ln{n})^2\Big)=\alpha n \mu^{-3}\sigma^2\ln n+o(n\ln n).
\end{align}
\end{cor}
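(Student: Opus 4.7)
The plan is to reduce both statements to the renewal-theoretic framework developed to prove Theorems~\ref{assumption1} and~\ref{lemma}. The identities (\ref{branoder22}) and (\ref{branoder2}) differ only by the contribution of bad vertices, so I first show this difference is $o(n\ln n)$. Since split trees are $\log n$-trees, $H_n$ has $L^p$ moments of order $\mathcal{O}(\ln n)$ for every $p$; combining this with Cauchy--Schwarz, the trivial bound $N\le sn$, and Theorem~\ref{lemma1} applied at a sufficiently large value of $k$, one gets
\begin{align*}
\mathbf{E}\Big(\sum_{v\in\mathsf{V}(T^{n})\setminus\mathsf{V}^{\ast}(T^{n})}(d(v)-\mu^{-1}\ln n)^2\Big)\le\mathbf{E}\big((N-N^{\ast})\,H_n^2\big)=o(n\ln n),
\end{align*}
so it suffices to prove (\ref{branoder22}).

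For (\ref{branoder22}), I split the expectation by depth levels,
\begin{align*}
\mathbf{E}\Big(\sum_{v\in\mathsf{V}(T^{n})}(d(v)-\mu^{-1}\ln n)^2\Big)=\sum_{d\ge 0}(d-\mu^{-1}\ln n)^2\,\mathbf{E}(N_d),
\end{align*}
where $N_d$ denotes the number of vertices of $T^{n}$ at depth $d$. Using the sandwich estimate (\ref{iliada1,1}) together with (\ref{binomial}) and (\ref{binomial2}), $\mathbf{E}(N_d)=b^d\mathbf{P}(n_v>0)$ coincides, up to negligible error, with the renewal summand $\nu_d(\ln n)$ of Section~\ref{Subtrees}.

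Next I apply the exponential tilt by the size-biased variable $\Delta$ of (\ref{splitdef}), which converts the sequence $\{\nu_d(\ln n)\}_d$ into a weighted probability mass function in $d$ arising from a renewal process whose step distribution has mean $\mu$ and variance $\sigma^2$. The renewal central limit theorem then shows that the normalized profile $\mathbf{E}(N_d)/\mathbf{E}(N)$ is concentrated around $d=\mu^{-1}\ln n$ with variance $\sigma^2\mu^{-3}\ln n$. Combined with $\mathbf{E}(N)=\alpha n+o(n)$ from Theorem~\ref{assumption1}, this produces
\begin{align*}
\sum_d (d-\mu^{-1}\ln n)^2\,\mathbf{E}(N_d)=\alpha n\,\sigma^2\mu^{-3}\ln n+o(n\ln n),
\end{align*}
as desired.

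The main obstacle will be sharpening the renewal CLT to give an $o(n\ln n)$ error in the second moment, rather than only $\mathcal{O}(n\ln n)$. This requires uniform higher-moment control for the tilted sums, which is available because all moments of $-\ln\Delta$ are bounded (as noted after (\ref{splitdef})); most of the required technical groundwork will already be in place from the proof of Theorem~\ref{lemma}.
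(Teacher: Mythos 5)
Your first step (showing that (\ref{branoder22}) and (\ref{branoder2}) differ by $o(n\ln n)$) is essentially the paper's own observation, applied in the reverse direction; the paper proves (\ref{branoder2}) first and then discards the bad vertices using Theorem~\ref{lemma1}. That reduction is fine, though the ``trivial bound $N\le sn$'' is not actually trivial when $s_0=s_1=0$ --- you should instead invoke Lemma~\ref{lem11}, which gives $\mathbf{E}(N^2)=\mathcal{O}(n^2)$, together with moment bounds on $H_n$ that go slightly beyond what the paper states explicitly.

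The main part of your argument, however, contains a genuine error. You claim that $\mathbf{E}(N_d)=b^d\mathbf{P}(n_v>0)$ ``coincides, up to negligible error, with the renewal summand $\nu_d(\ln n)=b^d\mathbf{P}(Y_d\le\ln n)=b^d\mathbf{P}(M_v^n\ge 1)$.'' This cannot be true: summing over $d$ gives
\begin{align*}
\sum_d \mathbf{E}(N_d)=\mathbf{E}(N)=\alpha n+o(n),\qquad
\sum_d \nu_d(\ln n)=U(\ln n)=\mu^{-1}n+o(n)
\end{align*}
by Theorem~\ref{assumption1} and Lemma~\ref{lem5}, and $\alpha\ne\mu^{-1}$ in general (for the binary search tree $\alpha=1$ while $\mu^{-1}=2$). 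The two profiles have different total mass, so the approximation fails at the level of constants, not merely with a negligible error. The underlying reason is that the sandwich bound (\ref{iliada1,1}) controls $|n_v-M_v^n|$ only up to $n^{0.6}$, which is useless for distinguishing $\{n_v\ge 1\}$ from $\{M_v^n\ge 1\}$ when $M_v^n=\mathcal{O}(1)$; and the sum $\sum_d b^d\,\mathbf{P}(\cdot)$ is dominated precisely by the boundary region where $M_v^n$ is of constant order, so the discrepancy there contributes a constant fraction of the mass. Your subsequent appeal to a renewal CLT for the tilted measure is applied to the wrong object, and there is no immediate way to repair it without first establishing a renewal-type identity for $b^d\mathbf{P}(n_v\ge 1)$ itself, which is not done anywhere in the paper.

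The paper takes a genuinely different route that sidesteps this difficulty entirely. It never computes the vertex profile $\mathbf{E}(N_d)$: instead it decomposes the tree into small fringe subtrees $T_u^{\ln^{0.4}n}$, uses the fact that all vertices of such a subtree sit within $\mathcal{O}(\ln^{0.8}n)$ of the root $u$, and replaces $(d(u)-\mu^{-1}\ln n)^2$ by the average of $(D_k-\mu^{-1}\ln n)^2$ over the $n_u$ balls in $T_u$. The conversion from vertices to balls then comes from $\mathbf{E}(N_u\,|\,n_u)=\alpha n_u+o(n_u)$ (Theorem~\ref{assumption1}), and the ball-depth concentration $\mathbf{E}[(D_k-\mu^{-1}\ln n)^2]=\sigma^2\mu^{-3}\ln n+o(\ln n)$ (equation (\ref{axel}) in the proof of Theorem~\ref{lemma}) finishes the argument. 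This exploits the already-proved second-moment convergence for ball depths, for which the renewal machinery \emph{does} apply cleanly, rather than trying to extract a Gaussian profile for vertices directly.
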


We write $\mathsf{V}^{\ast}\big(T_i\big)$ for the set of good vertices in $T_i$.
\begin{cor} \label{cor2} Let  $L=\lfloor \beta \log_b\ln{n}\rfloor$ for some large constant $\beta$. Then, summing over all vertices give
\begin{align}\label{indsum33}&\sum_{i=1}^{b^L}\sum_{v\in \mathsf{V}\big(T_i\big)}\frac {{(d_i(v)-\mu^{-1}\ln{n_i})^2}}{ \mu^{-3}\ln^3{n_i}}=\frac{\sigma^2\alpha n}{\ln^2{n}}+o_p\Big(\frac {n}{{\ln^2{n}}}\Big)\end{align}
and for the good vertices we also have 
\begin{align}\label{indsum3}&\sum_{i=1}^{b^L}\sum_{v\in \mathsf{V}^{\ast}\big(T_i\big)}\frac {{(d_i(v)-\mu^{-1}\ln{n_i})^2}}{ \mu^{-3}\ln^3{n_i}}=\frac{\sigma^2\alpha n}{\ln^2{n}}+o_p\Big(\frac {n}{{\ln^2{n}}}\Big).\end{align}
\end{cor}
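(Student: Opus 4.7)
The plan is to apply Corollary \ref{cor1} inside each of the $b^L$ subtrees $T_i$ rooted at depth $L$, and then to replace the random factors $n_i,\ln n_i$ by $n,\ln n$ using the concentration estimates from Section \ref{Subtrees}. Conditional on $\Omega_L$, the subtrees $T_1,\ldots,T_{b^L}$ are independent, and each $T_i$ (conditional on $n_i$) is distributed as a split tree of cardinality $n_i$ by the recursive structure of the generating algorithm. Applying Corollary \ref{cor1} inside $T_i$ gives
\begin{equation*}
\mathbf{E}\Big[\sum_{v\in\mathsf{V}(T_i)}(d_i(v)-\mu^{-1}\ln n_i)^2\Big|n_i\Big]=\alpha n_i\mu^{-3}\sigma^2\ln n_i+o(n_i\ln n_i),
\end{equation*}
with the identical identity when $\mathsf{V}(T_i)$ is replaced by $\mathsf{V}^{\ast}(T_i)$. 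Dividing by $\mu^{-3}\ln^3 n_i$ and summing over $i$ yields
\begin{equation*}
\mathbf{E}\big[S_n\big|\Omega_L\big]=\sigma^2\alpha\sum_{i=1}^{b^L}\frac{n_i}{\ln^2 n_i}+\sum_{i=1}^{b^L}o\Big(\frac{n_i}{\ln^2 n_i}\Big),
\end{equation*}
where $S_n$ is the left-hand side of either (\ref{indsum33}) or (\ref{indsum3}).

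Next I show $\sum_i n_i/\ln^2 n_i=(n/\ln^2 n)(1+o_p(1))$. Since $L=\lfloor\beta\log_b\ln n\rfloor$ we have $b^L\le\ln^\beta n$ and $L=O(\log\log n)$. By (\ref{sub}) and (\ref{iliada1,1}), $n_i=ne^{-Y_{L,i}}(1+o(1))$ with high probability, where $Y_{L,i}=\sum_{j=1}^L\varpi_{j,i}$ is a sum of $L$ i.i.d.\ copies of $-\ln V$ with mean $\mu L$ and variance $\sigma^2 L$. Higher-moment Markov bounds on $|Y_{L,i}-\mu L|$ combined with a union bound over the $b^L\le\ln^\beta n$ subtrees show $\max_i|Y_{L,i}-\mu L|=o_p(\ln n)$, so $\ln n_i=(1+o_p(1))\ln n$ uniformly in $i$. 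Together with $\sum_i n_i=n-O(b^L)$ this proves the concentration claim, and the error sum $\sum_i o(n_i/\ln^2 n_i)$ is also $o_p(n/\ln^2 n)$ by uniform convergence on the high-probability event that all $n_i$ are of order $n/b^L$.

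To upgrade the conditional-expectation estimate to an $o_p$ statement I bound the conditional variance. By conditional independence of the $T_i$ given $\Omega_L$,
\begin{equation*}
\mathbf{Var}(S_n|\Omega_L)=\sum_{i=1}^{b^L}\mathbf{Var}(X_i|n_i),\qquad X_i:=\frac{1}{\mu^{-3}\ln^3 n_i}\sum_{v\in\mathsf{V}(T_i)}(d_i(v)-\mu^{-1}\ln n_i)^2.
\end{equation*}
From the height bound $H_{n_i}=O(\ln n_i)$ and $N_i=O(n_i)$ whp one obtains the uniform bound $X_i=O(n_i/\ln n_i)$, and combining with $\mathbf{E}[X_i|n_i]=O(n_i/\ln^2 n_i)$ yields $\mathbf{Var}(X_i|n_i)\le\|X_i\|_\infty\mathbf{E}[X_i|n_i]=O(n_i^2/\ln^3 n_i)$. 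Summing gives $\mathbf{Var}(S_n|\Omega_L)=O(n^2/(b^L\ln^3 n))$, which is $o((n/\ln^2 n)^2)$ provided $\beta>1$. Chebyshev's inequality then yields $S_n-\mathbf{E}[S_n|\Omega_L]=o_p(n/\ln^2 n)$, and combining with the preceding steps proves both (\ref{indsum33}) and (\ref{indsum3}).

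The main obstacle is the simultaneous control required: on the one hand, the error $o(\cdot)$ in Corollary \ref{cor1} is only pointwise in $n_i$, so one has to handle uniformity of the convergence across the $b^L$ subtrees, separately treating the exceptional event where some $n_i$ is atypically small via (\ref{iliada1,1}); on the other hand, the variance estimate forces $\beta$ to be chosen sufficiently large so that the factor $b^{-L}$ wins against the $\ln n$ factors. Balancing these two requirements and merging the uniform concentration of $\{n_i\}$ with the variance bound is the technical heart of the argument.
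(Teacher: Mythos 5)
Your overall strategy matches the paper's: condition on $\Omega_L$, apply Corollary \ref{cor1} inside each subtree $T_i$, replace $\ln n_i$ by $\ln n$ via concentration, and then upgrade the conditional-expectation statement to an $o_p$ via a conditional variance bound plus Chebyshev. The treatment of $\sum_i n_i/\ln^2 n_i$ (truncating small $n_i$ and Taylor-expanding $1/\ln^2 n_i$) is also essentially what the paper does in (\ref{boundy})--(\ref{pontus1}).

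There is, however, a genuine gap in your variance step. You write $\mathbf{Var}(X_i\mid n_i)\leq\|X_i\|_\infty\,\mathbf{E}[X_i\mid n_i]$ after asserting $X_i=O(n_i/\ln n_i)$ from $H_{n_i}=O(\ln n_i)$ and $N_i=O(n_i)$ ``whp''. But neither bound holds almost surely: when $s_0=s_1=0$ the number of vertices $N_i$ has no deterministic linear bound in $n_i$ (only $\mathbf{E}(N_i)=O(n_i)$ and $\mathbf{E}(N_i^2)=O(n_i^2)$ by Lemma \ref{lem11}), and the height $H_{n_i}$ has exponentially rare but unbounded excursions. Hence $\|X_i\|_\infty$ is not $O(n_i/\ln n_i)$, and the inequality you invoke is unjustified as written. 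This is precisely why the paper proves (\ref{indsum3}) --- the good-vertex version --- first: by definition of good in $T_i$ one has $(d_i(v)-\mu^{-1}\ln n_i)^2\le\ln^{1+2\epsilon}n_i$ \emph{deterministically}, so each summand is bounded by $\mu^{3}/\ln^{2-2\epsilon}n_i$ (which is $\Omega_L$-measurable), and the conditional variance of $Z_i$ can then be controlled via $\mathbf{E}(Z_i^2\mid\Omega_L)=O(n_i^2/\ln^{4-4\epsilon}n_i)$ using only the second-moment bound $\mathbf{E}(N_i^2\mid n_i)=O(n_i^2)$. The all-vertices statement (\ref{indsum33}) is then deduced from (\ref{indsum3}) via Theorem \ref{lemma1} (and Remark \ref{rem1}), which control the contribution of bad vertices. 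To repair your argument you would either need to adopt this good-vertex route, or explicitly truncate on the high-probability event $\{H_{n_i}\le C\ln n_i,\ N_i\le Cn_i\}$ and bound the variance contribution from the complementary event separately using the tail estimates from Section \ref{2}.
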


\section{Some Fundamental Renewal Theory Results} 
\label{mainsection}

The main goal of this section is to present a renewal theory lemma and a corollary of this lemma, which are both frequently used in this study. 
In contrast to standard renewal theory the distribution function $\nu(t)$ in (\ref{renewal equation}) is not a probability measure. 
However, to solve (\ref{renewal equation}) we can apply \cite[Theorem VI.5.1]{asmussen} which deals with non probability measures. The result we get is presented in the following lemma. 
\begin{Lemma}\label{lem5} The renewal function $U(t)$
in (\ref{renewal function}) satisfies  
\begin{align}\label{renewal equation3}
 U(t)=(\mu^{-1}+o(1))e^t~~as~t\rightarrow \infty.
\end{align}
\end{Lemma}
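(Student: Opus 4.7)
The plan is to apply an exponential (Esscher) tilt to convert the excessive renewal equation (\ref{renewal equation}) into a standard renewal equation driven by a probability measure, and then invoke the key renewal theorem. First, I would observe that although $\nu$ has total mass $b>1$, the tilted measure $d\tilde F(t) := e^{-t}\, d\nu(t) = b e^{-t}\, d\mathbf{P}(\varpi_r \le t)$ is a proper probability measure: its total mass equals $b\mathbf{E}(e^{-\varpi_r}) = b\mathbf{E}(V) = 1$, and its mean equals $b\mathbf{E}(\varpi_r e^{-\varpi_r}) = b\mathbf{E}(-V\ln V) = \mu$. By assumption (A1), $-\ln V$ is non-lattice, and this property is inherited by $\tilde F$ since the Radon--Nikodym factor $be^{-t}$ is strictly positive.

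Next, I would set $u(t) := e^{-t}U(t)$ and $g(t) := e^{-t}\nu(t)$, and multiply (\ref{renewal equation}) by $e^{-t}$. A direct manipulation of the convolution integral turns the equation into the \emph{standard} renewal equation
\[
u(t) = g(t) + \int_0^t u(t-s)\, d\tilde F(s),
\]
driven by the probability distribution $\tilde F$ with mean $\mu$. The function $g$ is nonnegative, bounded by $be^{-t}$, and hence directly Riemann integrable.

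The key renewal theorem, which is the non-lattice case of \cite[Theorem VI.5.1]{asmussen}, then yields
\[
u(t) \longrightarrow \mu^{-1} \int_0^\infty g(s)\, ds \quad \text{as } t \to \infty.
\]
A quick integration by parts gives $\int_0^\infty e^{-s}\nu(s)\, ds = b\mathbf{E}(e^{-\varpi_r}) = b\mathbf{E}(V) = 1$, so $u(t) \to \mu^{-1}$, which is exactly the claim $U(t) = (\mu^{-1}+o(1))e^t$.

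The only genuinely delicate point is recognizing the tilt: the exponent $1$ in $e^{-t}$ is the Malthusian parameter of the measure $\nu$, picked out by the identity $b\mathbf{E}(V) = 1$, and it is this particular tilt that produces a probability measure whose mean happens to be $\mu$. Once the tilt is identified, everything reduces to routine verifications (non-latticity of $\tilde F$, direct Riemann integrability of $g$, evaluation of the constant) and a standard invocation of the key renewal theorem, which is exactly why the author can cite \cite[Theorem VI.5.1]{asmussen} and be done.
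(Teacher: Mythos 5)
Your proposal is correct and is essentially identical to the paper's proof: the same exponential tilt $d\omega(t)=e^{-t}\,d\nu(t)$, the same verification that the tilted measure is a probability measure with mean $\mu$, the same direct Riemann integrability check via the bound $be^{-t}$, the same appeal to \cite[Theorem VI.5.1]{asmussen}, and the same integration by parts to evaluate the limiting constant. The only cosmetic difference is that the paper identifies the tilted measure as the law of $-\ln\Delta$ (the size-biased $V$) and reads off $\mathbf{E}(-\ln\Delta)=\mu$ from the definitions in (\ref{splitdef}), whereas you compute the mean directly as $b\mathbf{E}(-V\ln V)=\mu$; these are the same identity.
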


\begin{proof}
 Since the distribution function $\nu(t)$ is not a probability measure, we define another (``conjugate'' or ``tilted'') measure $\omega$ on $[0,\infty)$ by
\begin{align*}
d\omega(t)=e^{-t}d\nu(t).
\end{align*}
Recall from Section \ref{not} that $\Delta=V_S$ is the size biased distribution of 
$(V_1,\dots,V_b)$.
We note that $\omega(x)$ is the distribution function of the random variable 
$-\ln \Delta$ since
\begin{align*}
\mathbf{P}\Big(-\ln \Delta\leq x\Big)=\mathbf{E}\Big(\mathbf{E}\Big(I\{-\ln V_S\leq x\}\Big|(V_1,\dots,V_b)\Big)\Big)=\\
\mathbf{E}\Big(\sum_{i=1}^{b}I\{-\ln V_i\leq x\}V_i\Big)=
b\mathbf{E}\Big(I\{-\ln V\leq x\}e^{-\ln(V)}\Big)=\omega(x).
\end{align*}
Thus, $\omega$  is a probability measure.  
Further, by recalling $\mu:=\mathbf{E}(-\ln \Delta)$ and $\sigma^2:=\mathbf{Var}(-\ln \Delta)$ gives
\begin{align}\label{expren}
\mathbf{E}(\omega)=\mu,~~~\mathrm{and}~\mathbf{Var}(\omega)=\sigma^2.
\end{align}

Define $\widehat{U}(t):=e^{-t}U(t)$ and $\widehat{\nu}(t):=e^{-t}\nu(t)$. We shall apply \cite[Theorem VI.5.1]{asmussen}, but first we need to show that the condition  that $\widehat{\nu}(t)$ is ``directly Riemann integrable''  (d.R.i.) is satisfied.
 Note that $\widehat{\nu}(t)\leq b e^{-t}$, and thus since $\widehat{\nu}(t)$ is also continuous almost everywhere, by \cite[Proposition IV.4.1.(iv)]{asmussen} it follows that $\widehat{\nu}(t)$ is d.R.i. if $ b e^{-t}$ is d.R.i.. 
That $b e^{-t}$ is d.R.i.  follows by applying \cite[Proposition IV.4.1.(v)]{asmussen}, since $b e^{-t}$  is a  nonincreasing and Lebesgue integrable function. Then by applying \cite[Theorem VI.5.1]{asmussen} and (\ref{expren}) we get
 \begin{align}\label{ny} \widehat{U}(t)=\widehat{\nu}(t)+(\widehat{U}*d\omega)(t),
\end{align}
where $\omega(t)$ is a probability measure, and
\begin{align}\label{renewal equation4}\widehat{U}(t)\rightarrow \mu^{-1}\int_{0}^{\infty}\widehat{\nu}(x)dx=\mu^{-1}\int_{0}^{\infty}\nu(x)e^{-x}dx=:\kappa.
 \end{align}

 Integration by parts now gives \begin{align}\label{reneq}\kappa&=\mu^{-1}\Big(b\Big{|}-e^{-t}\mathbf{P}(\varpi_r\leq t)\Big{|}_{0}^{\infty}-\int_{0}^{\infty}-e^{-t}d\nu(t)\Big)\nonumber\\
&=\mu^{-1}b\mathbf{E}(e^{-\varpi_r})=\mu^{-1}.
\end{align}
Thus, $U(t)=(\mu^{-1}+o(1))e^t$.
\end{proof}

The following result is a very useful corollary of Lemma \ref{lem5}.
 We write for $v$ at depth $d(v)$, $M_{v}^{n}:=n\prod_{j=1}^{d(v)}W_j$. Recall from (\ref{sub}) in Section \ref{Subtrees} that this is close to the real subtree size $n_v$.
\begin{cor}\label{lem6,1} By taking the sum over vertices $v,~d(v)=k$ and letting $\frac{n}{K}\rightarrow \infty$, we get that the expected number of nodes with $M_{v}^{n}\geq K$ is equal to
\begin{align}\label{ej3}
\mathbf{E}(\Big|v\in V(T^n);~M_{v}^{n}\geq K\Big|)&=\sum_{k=0}^{\infty}b^k\mathbf{P}\Big(M_{v}^{n}\geq K\Big)\nonumber\\&=:U(\ln n-\ln K)+1= (\mu^{-1}+o(1))
\frac{n}{K}.
\end{align}\end{cor}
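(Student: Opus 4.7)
The plan is to express the event $\{M_v^n \geq K\}$ in a form that matches the renewal function $U$ defined in (\ref{renewal function}), and then invoke Lemma \ref{lem5} to read off the asymptotic. The corollary is really a direct unpacking of definitions plus Lemma \ref{lem5}, so most of the work is notational.

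First, I would recall that for a vertex $v$ at depth $d(v)=k$ in the skeleton tree $S_b$, the product $M_v^n = n\prod_{j=1}^{k} W_{j,v} = n e^{-Y_k}$, where $Y_k=-\sum_{j=1}^{k}\ln W_{j,v}$ is the partial sum of $k$ i.i.d.\ copies of $\varpi_r = -\ln V$. Hence the condition $M_v^n \geq K$ is equivalent to $Y_k \leq \ln n - \ln K$. Since there are exactly $b^k$ vertices at depth $k$ in $S_b$, and the random variables $Y_k$ attached to each have the same distribution, linearity of expectation gives
\begin{equation*}
\mathbf{E}\bigl(\#\{v\in S_b:~d(v)=k,~M_v^n\geq K\}\bigr)=b^k\mathbf{P}(Y_k\leq \ln n-\ln K)=\nu_k(\ln n-\ln K).
\end{equation*}

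Next I would sum over $k\geq 0$. For $k=0$ only the root contributes, and $Y_0=0$, so $\nu_0(\ln n-\ln K)=1$ provided $n\geq K$ (which we may assume since $n/K\to\infty$). For $k\geq 1$ the sum $\sum_{k=1}^{\infty}\nu_k(\ln n-\ln K)$ is, by definition (\ref{renewal function}), exactly $U(\ln n-\ln K)$. Putting the two pieces together yields
\begin{equation*}
\mathbf{E}\bigl(|\{v\in \mathsf{V}(T^n):~M_v^n\geq K\}|\bigr)=U(\ln n-\ln K)+1.
\end{equation*}

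Finally, because $n/K\to\infty$ translates into $\ln n-\ln K\to\infty$, I would apply Lemma \ref{lem5} with $t=\ln n-\ln K$, which gives $U(\ln n-\ln K)=(\mu^{-1}+o(1))e^{\ln n-\ln K}=(\mu^{-1}+o(1))\tfrac{n}{K}$. The additive $+1$ is negligible compared with $n/K$, so it can be absorbed into the $o(n/K)$ term, producing the claimed asymptotic. The main thing to be a little careful about is making sure the indexing starts at $k=0$ correctly and that summing $b^k\mathbf P(M_v^n\ge K)$ over all vertices at depth $k$ is legitimate before any pruning to the actual tree $T^n$ (the statement is about all vertices $v$ of the skeleton tree for which $M_v^n\ge K$, since $M_v^n$ depends only on the split vectors, not on whether $v$ is actually in $T^n$; the corollary's notation should be read in this way, consistent with (\ref{sub})).
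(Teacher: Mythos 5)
Your proof is correct and follows essentially the same route as the paper's: translate $M_v^n\geq K$ into $Y_k\leq\ln n-\ln K$, recognize the resulting sum as the renewal function $U(\ln n-\ln K)$ (plus the $k=0$ term), and apply Lemma \ref{lem5}. The paper's own proof compresses these steps into a single display without separating out $\nu_0=1$ or commenting on the skeleton-tree reading of the event, so your version is merely a more explicit unpacking of the same argument.
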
 
\begin{proof}

 By using Lemma \ref{lem5} we get 
\begin{align}\label{sub55}  \sum_{d=0}^{\infty} b^d\mathbf{P}\Big(n\prod_{j=1}^{d}W_{j,v}\geq K\Big)&=
\sum_{d=0}^{\infty} b^d\mathbf{P}\Big(Y_d \leq \ln n-\ln K\Big)\nonumber\\&=(\mu^{-1}+o(1))
\frac{n}{K}.\end{align}
\end{proof}

We complete this section with a more general result in renewal theory, and a corollary of a more specific result that is valid for the renewal function $U(t)$ in (\ref{renewal function}). 
\begin{thm}\label{thm6} 
Let $F$ be a non-lattice probability measure and suppose that 
$0<\mu=\mathbf{E}(X)=\int_{0}^{\infty}xdF(x)<\infty$ and $\mathbf{E}(X^2)=\sigma^2+\mu^2<\infty$.

Let 
\begin{align}\label{standardrenewal}
 Z(t)=z(t)+\int_{0}^{t}Z(t-u)dF(u),~~t\geq 0,
\end{align}
 where  $z(t)$ is a nonnegative function, such that 
$a:=\int_{0}^{\infty}z(u)du<\infty$.
Define \begin{align}\label{standardrenewal2}
 G(x)=\int_{0}^{x}\Big(Z(t)-\frac{a}{\mu}\Big)dt.
\end{align}
Then \begin{align}\label{standardrenewal3}
 \lim_{x\rightarrow \infty}G(x)=-\frac{1}{\mu}\int_{0}^{\infty}uz(u)du+a\frac{\sigma^2+\mu^2}{2\mu^2}.
\end{align}
\end{thm}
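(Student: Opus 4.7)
My plan is to exploit the classical representation of the solution of the renewal equation (\ref{standardrenewal}) through the renewal measure. Iterating (\ref{standardrenewal}) yields $Z(t) = \int_0^t z(t-u)\,d\mathsf{U}(u)$, where $\mathsf{U}(t) := \sum_{n \geq 0} F^{n\ast}(t)$ is the standard renewal function associated with $F$ (distinct from the $U$ of (\ref{renewal function})). The main analytic input is the refinement of the elementary renewal theorem: under the non-lattice hypothesis together with $\mathbf{E}(X^{2})<\infty$,
\[
\mathsf{U}(t)-\frac{t}{\mu}\;\longrightarrow\; c\,:=\,\frac{\sigma^{2}+\mu^{2}}{2\mu^{2}}\qquad\text{as } t\to\infty.
\]
This can be derived, for example, by applying Wald's identity to $S_{\mathcal{N}(t)+1}=t+A(t)$ (where $A(t)$ is the overshoot) and using that $A(t)$ has asymptotic mean $\mathbf{E}(X^{2})/(2\mu)$.

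Setting $\nu(u):=\mathsf{U}(u)-u/\mu$ (so $\nu(u)\to c$ and $\nu$ is bounded) and $Z^{\ast}(y):=\int_{0}^{y}z(s)\,ds$, Fubini produces
\[
\int_{0}^{x}Z(t)\,dt\;=\;\int_{0}^{x}Z^{\ast}(x-u)\,d\mathsf{U}(u)\;=\;\frac{1}{\mu}\int_{0}^{x}Z^{\ast}(v)\,dv\;+\;\int_{0}^{x}Z^{\ast}(x-u)\,d\nu(u).
\]
The first integral is handled by elementary manipulation: using $Z^{\ast}(v)-a=-\int_{v}^{\infty}z(s)\,ds$ and Fubini once more, it equals $ax/\mu-\mu^{-1}\int_{0}^{\infty}uz(u)\,du+o(1)$, where the finiteness of $\int uz(u)\,du$ is the hidden integrability hypothesis making the stated limit finite. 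For the second integral, integration by parts---with the unit atom of $\mathsf{U}$ at $0$ absorbed by $Z^{\ast}(0)=0$, so both boundary terms vanish---gives
\[
\int_{0}^{x}Z^{\ast}(x-u)\,d\nu(u)\;=\;\int_{0}^{x}z(v)\,\nu(x-v)\,dv.
\]
Since $\nu$ is bounded and $\nu(x-v)\to c$ pointwise while $z$ is integrable, dominated convergence yields that this tends to $ca$. Subtracting $ax/\mu$ and combining the two pieces then produces exactly the announced limit $-\mu^{-1}\int_{0}^{\infty}uz(u)\,du+ac$.

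The heart of the argument is the refined renewal asymptotic $\mathsf{U}(t)-t/\mu\to c$ together with the integration-by-parts step, where one must treat the unit atom of $\mathsf{U}$ at zero carefully and verify that $\nu$ has locally bounded variation (the former is a classical consequence of the non-lattice and second-moment assumptions, the latter is automatic since $\mathsf{U}$ and $u\mapsto u/\mu$ both are). Everything else reduces to routine Fubini and dominated-convergence arguments using $a<\infty$ and the implicit integrability of $uz(u)$; a sanity check in the exponential (Poisson) case gives $Z\equiv a/\mu$ and $G\equiv 0$, in agreement with the formula ($c=1$, $\int uz\,du=a\mu$ for $z(t)=a\lambda e^{-\lambda t}$).
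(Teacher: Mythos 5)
Your proof is correct and follows essentially the same route as the paper: both rest on the representation $Z(t)=\int_0^t z(t-u)\,dV(u)$ via the standard renewal function $V$ (your $\mathsf{U}$), the refined renewal asymptotic $V(t)-t/\mu\to(\sigma^2+\mu^2)/(2\mu^2)$ together with Lorden's bound $0\le V(t)-t/\mu\le(\sigma^2+\mu^2)/\mu^2$ (cited in the paper from Asmussen, Propositions VI.4.1--4.2), a dominated-convergence passage to the limit, and the same implicit integrability hypothesis $\int_0^\infty uz(u)\,du<\infty$. The only difference is cosmetic: you apply Fubini in $t$ and then integrate by parts to shift the derivative from $Z^{\ast}$ onto $\nu$, whereas the paper applies Fubini in $u$ and directly splits $V(x-u)-x/\mu$ into $-u/\mu$ plus $V(x-u)-(x-u)/\mu$, so both arrive at the identical integral $\int_0^x z(v)\bigl(V(x-v)-\tfrac{x-v}{\mu}\bigr)dv$ on which dominated convergence is applied.
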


\begin{proof} Let $V(t)$ be the standard renewal function in (\ref{stand}), where $F_m(t)=\mathbf{P}\Big(\sum_{k=0}^{m}X_k\leq t\Big)$.
By applying \cite[Theorem IV.2.4]{asmussen}, 
\begin{align}\label{standardrenewal4}
 Z(t)=\int_{0}^{t}z(t-u)dV(u)=\int_{0}^{\infty}z(u)dV(t-u),
\end{align}
 where the last equality follows because $V(t)=0$ for $t\leq 0$.
By applying (\ref{standardrenewal4}) and Fubini's Theorem we get \begin{align}\label{standardrenewal5}&G(x)=
 \int_{0}^{\infty}z(u)\int_{0}^{x}dV(t-u)du-\frac{ax}{\mu}=\int_{0}^{\infty}z(u)V(x-u)du-\frac{ax}{\mu}.
\end{align}
Hence,
\begin{align}\label{standardrenewal6}&G(x)=\int_{0}^{\infty}z(u)\Big(V(x-u)-\frac{x}{\mu}\Big)du\nonumber\\&=-\frac{1}{\mu}\int_{0}^{x}z(u)udu-
\frac{1}{\mu}\int_{x}^{\infty}z(u)xdu+\int_{0}^{x}z(u)\Big(V(x-u)-\frac{x-u}{\mu}\Big)du.
\end{align}

From \cite[Proposition VI.4.1]{asmussen} we have $V(t)-\frac{t}{\mu}\rightarrow \frac{\sigma^2+\mu^2}{2\mu^2}$ and by \cite[Proposition VI.4.2]{asmussen}, $0\leq V(t)-\frac{t}{\mu}\leq \frac{\sigma^2+\mu^2}{\mu^2}$. Hence, the Lebesgue dominated convergence theorem applied to the last integral in (\ref{standardrenewal6}) gives
 \begin{align}\label{standardrenewal7}&
 \lim_{x\rightarrow \infty } \int_{0}^{\infty}z(u)\Big(V(x-u)-\frac{x-u}{\mu}\Big)I\{u\leq x\}du=\int_{0}^{\infty}z(u)\frac{\sigma^2+\mu^2}{2\mu^2}du.\end{align}
Note that for all $x$, $\int_{x}^{\infty}z(u)(u-x)du\geq 0$.  Thus, if $\int_{0}^{\infty}z(u)udu$ is integrable 
$\lim_{x\rightarrow \infty }\int_{x}^{\infty}z(u)xdu=0$, and the convergence result in (\ref{standardrenewal3}) obviously follows. If $\int_{0}^{\infty}z(u)udu$ is not integrable then we have a special case of (\ref{standardrenewal3}), i.e., $\lim_{x\rightarrow \infty} G(x)=-\infty$.
\end{proof}

We define the function 
\begin{align}\label{V1}
 W(x)=\int_{0}^{x}e^{-t}(U(t)-\mu^{-1}e^t)dt.
\end{align}

The next result is a corollary of Theorem \ref{thm6}.
\begin{cor}\label{cor3} The function $W(x)$
in (\ref{V1}) satisfies  \begin{align} \label{V}W(x)=\frac{\sigma^2-\mu^2}{2\mu^2}-\mu^{-1}+o(1)~~as ~x\rightarrow \infty. 
\end{align}
\end{cor}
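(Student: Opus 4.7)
The plan is to apply Theorem \ref{thm6} directly to the tilted renewal equation obtained in the proof of Lemma \ref{lem5}. Recall from (\ref{ny}) that $\widehat{U}(t) := e^{-t}U(t)$ and $\widehat{\nu}(t) := e^{-t}\nu(t)$ satisfy
\begin{align*}
\widehat{U}(t) = \widehat{\nu}(t) + (\widehat{U}\ast d\omega)(t),
\end{align*}
where $\omega$ is the probability distribution of $-\ln\Delta$, with mean $\mu$ and variance $\sigma^{2}$ by (\ref{expren}). Under assumption (A1), $\omega$ is non-lattice and has finite second moment, so the hypotheses of Theorem \ref{thm6} hold with $Z=\widehat{U}$, $z=\widehat{\nu}$, and $F=\omega$.

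Next I would identify the constant $a = \int_{0}^{\infty}\widehat{\nu}(u)\,du$. Integration by parts, together with $\nu(0)=0$ (which follows from $\mathbf{P}(V=1)=0$) and $b\mathbf{E}(V)=1$, gives $a=b\mathbf{E}(e^{-\varpi_r})=1$; alternatively this is immediate from $\kappa=\mu^{-1}a=\mu^{-1}$ in (\ref{reneq}). With $a=1$, the function $G(x)$ in (\ref{standardrenewal2}) becomes
\begin{align*}
G(x)=\int_{0}^{x}\bigl(\widehat{U}(t)-\mu^{-1}\bigr)\,dt=\int_{0}^{x}e^{-t}\bigl(U(t)-\mu^{-1}e^{t}\bigr)\,dt = W(x),
\end{align*}
so Theorem \ref{thm6} yields the limit
\begin{align*}
\lim_{x\to\infty}W(x) = -\frac{1}{\mu}\int_{0}^{\infty}u\,\widehat{\nu}(u)\,du + \frac{\sigma^{2}+\mu^{2}}{2\mu^{2}}.
\end{align*}

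The remaining task is to evaluate $\int_{0}^{\infty}u\,\widehat{\nu}(u)\,du=\int_{0}^{\infty}ue^{-u}\nu(u)\,du$. Another integration by parts, with boundary terms vanishing as above, converts this to $\int_{0}^{\infty}(ue^{-u}+e^{-u})\,d\nu(u)=b\mathbf{E}(\varpi_{r}e^{-\varpi_{r}})+b\mathbf{E}(e^{-\varpi_{r}})$. Using $\varpi_{r}=-\ln V$ together with the identities $\mu=b\mathbf{E}(-V\ln V)$ from (\ref{splitdef}) and $b\mathbf{E}(V)=1$, this equals $\mu+1$. Substituting back gives
\begin{align*}
\lim_{x\to\infty}W(x) = -\frac{\mu+1}{\mu} + \frac{\sigma^{2}+\mu^{2}}{2\mu^{2}} = \frac{\sigma^{2}-\mu^{2}}{2\mu^{2}} - \mu^{-1},
\end{align*}
which is exactly (\ref{V}). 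The main obstacle here is essentially bookkeeping: matching the renewal equation in the form demanded by Theorem \ref{thm6}, checking that the forcing term $\widehat{\nu}$ is integrable with finite first moment against $u$, and verifying the non-lattice property for $\omega$ (rather than just for $-\ln V$); none of these present a serious difficulty given (A1) and the moment bounds noted after (\ref{splitdef}).
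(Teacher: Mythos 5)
Your proposal is correct and follows the same route as the paper: apply Theorem~\ref{thm6} with $Z=\widehat{U}$, $z=\widehat{\nu}$, $F=\omega$, identify $a=1$ from (\ref{renewal equation4})--(\ref{reneq}), compute $\int_0^\infty u\widehat{\nu}(u)\,du=1+\mu$ by integration by parts (using $\nu(0)=0$ and $b\mathbf{E}(e^{-\varpi_r})=1$), and substitute into (\ref{standardrenewal3}). The only additions beyond the paper's exposition are the explicit identification $G(x)=W(x)$ and the remark that $\omega$ inherits the non-lattice property from $-\ln V$; both are correct and worth noting but do not change the argument.
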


\begin{proof} 

We apply Theorem \ref{thm6} to $Z(t)=\widehat{U}(t)=e^{-t}U(t)$  defined in the proof of Lemma \ref{lem5} (recall that $\widehat{U}(t)$ satisfies the renewal equation in (\ref{ny})).
Now, the constant $a$ as defined in Theorem \ref{thm6}, satisfies $a=\int_{0}^{\infty}\widehat{\nu}(u)du$, thus from (\ref{renewal equation4}) and (\ref{reneq}) we get $a=1$.
Using  (\ref{expren}) and (\ref{renewal equation4})--(\ref{reneq})  gives,
\begin{align*}
 \int_{0}^{\infty}\widehat{\nu}(u)udu=\int_{0}^{\infty}e^{-u}{\nu}(u)udu=\int_{0}^{\infty}e^{-u}{\nu}(u)du+
\int_{0}^{\infty}ue^{-u} d{\nu}(u)=1+\mu.
\end{align*}

\end{proof}

\section{Proofs of the Main Results}

\subsection{Proof of Theorem \ref{assumption1}}\label{1}

\subsubsection{Lemmas of Theorem \ref{assumption1}}\label{lemmas}


We present below some crucial lemmas by which we can then prove Theorem \ref{assumption1}.
 The proofs of these lemmas are given in Section \ref{proofslemmas} below.
The first lemma is fundamental for the proof.
\begin{Lemma}\label{lem11} For the first moment of the number of vertices $N$ we have
\begin{align}\label{obvious}\mathbf{E}(N)=\mathcal{O}(n)
\end{align}
and for the second moment of $N$ we have
\begin{align}\label{obv}\mathbf{E} (N^2)=\mathcal{O}\Big(n^2\Big).
\end{align} 
\end{Lemma}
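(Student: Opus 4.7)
My strategy is to proceed by induction on $n$, exploiting the standard recursive decomposition at the root. Once $n>s$ the root splits, and conditional on its split vector $\mathcal{V}$ and the resulting subtree sizes $(n_1,\dots,n_b)$---distributed as $(s_1,\dots,s_1)$ plus a multinomial vector with parameters $(n-s_0-bs_1;\,V_1,\dots,V_b)$, so that $\mathbf{E}(n_1)=(n-s_0)/b$---the $b$ subtrees are independent copies of split trees of the corresponding cardinalities, giving $N_n=1+\sum_{i=1}^{b}N^{(i)}_{n_i}$. Setting $f(n):=\mathbf{E}(N_n)$ (with $f(0)=0$) and $g(n):=\mathbf{E}(N_n^2)$, symmetry together with the conditional independence of subtrees yields
\begin{align*}
f(n) &=1+b\,\mathbf{E}\big[f(n_1)\mathbf{1}\{n_1\geq 1\}\big],\\
g(n) &=1+2bf(n)+b\,\mathbf{E}\big[g(n_1)\mathbf{1}\{n_1\geq 1\}\big]+b(b-1)\,\mathbf{E}\big[f(n_1)f(n_2)\big].
\end{align*}

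For the first moment I would try the ansatz $f(k)\leq Ck+D$. When $s_0\geq 1$ the identity $\mathbf{E}(n_1)=(n-s_0)/b$ lets the $+1$ be absorbed by $-Cs_0$ for any $C\geq 1/s_0$ with $D=0$. When $s_0=0$ this fails: substituting gives $f(n)\leq 1+Cn+bD\,\mathbf{P}(n_1\geq 1)$, and closing the recurrence forces $D<0$ with $|D|\big(b\mathbf{P}(n_1\geq 1)-1\big)\geq 1$, so one needs $\mathbf{P}(n_1\geq 1)>1/b$. This is precisely where Assumption (A1) is used: $\mathbf{P}(V=0)=0$ gives $\mathbf{E}\big((1-V)^{n}\big)\to 0$ by dominated convergence, hence $\mathbf{P}(n_1\geq 1)\to 1$, and for $n$ exceeding some $n_0$ one may take $|D|=2/(b-1)$. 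The finitely many base cases $n\leq n_0$ are absorbed into a sufficiently large $C$.

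For the second moment I would try $g(k)\leq C'k^2$. The cross term is controlled via $f(n_i)=O(n_i)$ together with the multinomial computation
\begin{equation*}
\mathbf{E}(n_1 n_2)=n^2\mathbf{E}(V_1V_2)+O(n)=\frac{1-c}{b(b-1)}\,n^2+O(n),
\end{equation*}
where $c:=b\mathbf{E}(V^2)$; this yields $b(b-1)\,\mathbf{E}[f(n_1)f(n_2)]\leq C^2(1-c)n^2+O(n)$. Similarly $\mathbf{E}(n_1^2)=n^2\mathbf{E}(V^2)+O(n)$ gives $b\,\mathbf{E}[g(n_1)]\leq C'c\,n^2+O(C'n)$. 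Summing,
\begin{equation*}
g(n)\leq \big[C'c+C^2(1-c)\big]\,n^2+O(n),
\end{equation*}
which closes as $g(n)\leq C'n^2$ provided $C'\geq C^2$ (since $1-c>0$). The strict bound $c<1$ again follows from (A1): because $V$ is not a.s.\ in $\{0,1\}$, Jensen's inequality is strict and $\mathbf{E}(V^2)<\mathbf{E}(V)=1/b$.

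The principal obstacle is the case $s_0=0$ in the first-moment bound: the naive linear ansatz $f(n)\leq Cn$ fails because the additive $+1$ in the recursion compounds, so one is forced into the refined ansatz with a negative offset whose existence relies on the non-degeneracy $\mathbf{P}(V=0)=0$ supplied by Assumption (A1). Once this is in place, everything else reduces to routine moment calculations for the multinomial vector $(n_1,\dots,n_b)$.
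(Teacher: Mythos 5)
Your recursive root-decomposition approach is a correct and genuinely different route from the paper's. The paper argues ball-by-ball: every inserted ball triggers at most one ``split cascade,'' and by estimating $\mathbf{P}(Z=k)$ via $(b\,\mathbf{E}(V^{s+1}))^{k}$ with $b\,\mathbf{E}(V^{s+1})<1$, the expected number of new nodes per ball is $\mathbf{E}(Z)=\mathcal{O}(1)$, giving $\mathbf{E}(N)=\mathcal{O}(n)$ directly; for $\mathbf{E}(N^{2})=\mathcal{O}(n^{2})$ the paper invokes Minkowski's inequality on $N\leq 1+\sum_{k}Z_{k}$. Your route instead writes the recursion $N_{n}=1+\sum_{i}N^{(i)}_{n_{i}}$ at the root and closes an affine ansatz $f(k)\leq Ck+D$ and a quadratic ansatz $g(k)\leq C'k^{2}$; this is closer in spirit to the classical Quicksort-type recurrences and is self-contained, whereas the paper's argument is shorter and also gives the key fact $\mathbf{E}(Z)=\mathcal{O}(1)$ that is reused elsewhere (e.g.\ Lemma~\ref{lem14}).

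Three points deserve attention before your plan becomes a proof. (i) In the case $s_{0}=s_{1}=0$ the event $\{n_{1}=n\}$ has positive probability $\mathbf{E}(V^{n})$, so the substitution of the ansatz into $\mathbf{E}[f(n_{1})\mathbf{1}\{n_{1}\geq 1\}]$ is not justified by a naive induction (it quietly uses the bound for $n_{1}=n$, which is what is being proved). This is repairable: move $b\,f(n)\,\mathbf{P}(n_{1}=n)$ to the left-hand side, note $b\,\mathbf{P}(n_{1}=n)=b\,\mathbf{E}(V^{n})\leq b\,\mathbf{E}(V^{2})<1$ for $n\geq 2$, and check that after dividing by $1-b\,\mathbf{P}(n_{1}=n)$ the required inequality reduces to exactly the same condition $1\leq D\big(1-b\,\mathbf{P}(n_{1}\geq 1)\big)$ you obtained, because the extra terms $-bCnp$ and $-bDp$ cancel on both sides; the same cancellation happens in the second-moment recursion. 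You should make that step explicit. (ii) The displayed identity for $g(n)$ has a slip: the linear term should be $2b\,\mathbf{E}[f(n_{1})]=2(f(n)-1)$, not $2bf(n)$; as you only need an upper bound this is harmless, but it should not be stated as an equality. (iii) The inequality $\mathbf{E}(V^{2})<\mathbf{E}(V)=1/b$ is not an instance of Jensen (which gives the reverse direction $\mathbf{E}(V^{2})\geq(\mathbf{E}V)^{2}$); it follows simply from $V^{2}<V$ on $(0,1)$ together with $\mathbf{P}(0<V<1)>0$, which is what (A1) supplies. With these fixes the argument is complete.
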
 

 \begin{Lemma}\label{lem14} Adding $K$ balls to a tree will only affect the expected number of nodes in a split tree by $\mathcal{O}(K)$ nodes.
\end{Lemma}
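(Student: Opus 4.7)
I will couple the two constructions of $T^n$ and $T^{n+K}$ on the same skeleton tree $S_b$ with identical split vectors $\mathcal V_v$ at every vertex and with the first $n$ balls sharing all their random choices. Since a ball insertion never removes a vertex, under this coupling $N^{n+K}\geq N^n$ and
\[
N^{n+K}-N^n=\sum_{k=1}^{K} X_k,
\]
where $X_k$ is the (non-negative) number of new vertices produced when ball $n+k$ is added. It therefore suffices to show that $\mathbf{E}(X_k)$ is bounded by an absolute constant $C$ independent of $k$ and of the tree state prior to the insertion.

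\textbf{Bounding $X_k$ via the splitting cascade.} When ball $n+k$ is added, it descends through existing vertices to a leaf $v$; no new vertex can appear on this path. If $n_v<s$ then $X_k=0$. If $n_v=s$, the leaf becomes full and splits, producing at most $b$ new children of $v$. A recursive split can occur only in the case $s_0=s_1=0$; as the algorithm itself notes, whenever $s_0>0$ or $s_1>0$ no child reaches capacity and the recursion terminates after one level. In the cascading case each child $c_i$ receives $\mathrm{Bin}(s+1,V_i)$ balls given $V_i$ and triggers a further split precisely when it receives all $s+1$ balls, i.e.\ with conditional probability $V_i^{s+1}$. Letting $X$ denote the total number of new vertices generated by one splitting-cascade that starts from a freshly-full leaf, I will use the self-similarity of the recursion and the independence of the split vectors at distinct vertices to obtain
\[
\mathbf{E}(X)\leq b+\lambda\,\mathbf{E}(X),\qquad \lambda:=b\,\mathbf{E}(V^{s+1}).
\]

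\textbf{Finishing the estimate.} Assumption (A1) gives $\mathbf{P}(V=1)=0$, so $V^{s+1}<V$ almost surely on $\{V>0\}$; since $\mathbf{E}(V)=1/b>0$ forces $\mathbf{P}(V>0)>0$, this yields $\mathbf{E}(V^{s+1})<1/b$ strictly and hence $\lambda<1$, so $\mathbf{E}(X)\leq b/(1-\lambda)=:C<\infty$. Because $X_k$ is stochastically dominated by $X$, we will have $\mathbf{E}(X_k)\leq C$ uniformly in $k$, and summing over $k$ gives
\[
\mathbf{E}(N^{n+K})-\mathbf{E}(N^n)=\sum_{k=1}^{K}\mathbf{E}(X_k)\leq CK=\mathcal{O}(K).
\]
The main obstacle will be the cascading regime $s_0=s_1=0$: one must rule out that a single ball insertion could generate an unbounded expected number of new vertices. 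This is exactly where the non-degeneracy assumption $\mathbf{P}(V=1)=0$ enters the argument, through the geometric-series bound $\mathbf{E}(X)\leq b/(1-\lambda)$ with $\lambda<1$.
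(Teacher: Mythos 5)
Your proof is correct and uses the same essential mechanism as the paper: the expected size of the splitting cascade triggered by a single ball insertion is bounded by a geometric series with ratio $\lambda = b\,\mathbf{E}(V^{s+1}) < 1$ (which in turn follows from $V\in(0,1)$ a.s.\ under (A1)), and summing over the $K$ inserted balls gives $\mathcal{O}(K)$. The paper's proof of Lemma~5.4 points back to the proof of (\ref{obvious}) in Lemma~5.1, which derives the same conclusion by writing $\mathbf{E}(Z)=\sum_k k\,\mathcal{O}\big((b\mathbf{E}(V^{s+1}))^k\big)$ rather than by setting up the self-similar recursion $\mathbf{E}(X)\le b+\lambda\mathbf{E}(X)$; these are two routes to the same bound. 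Your explicit monotone coupling of $T^n$ and $T^{n+K}$ on a common skeleton tree, and the stochastic domination $X_k\preceq X$, make the reduction to a single-cascade bound cleaner than the paper's terser presentation, but they do not change the argument's substance. One small point to keep in mind, which your inequality already covers: when $s_0=s_1=0$ the $b$ children of a split leaf are not all added to the tree — only those receiving at least one ball — so the correct bound per split level is ``at most $b$'' rather than ``exactly $b$,'' and only the unique child receiving all $s+1$ balls can propagate the cascade.
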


Let $R$ be the set of vertices such that conditioned on the split vectors, $r\in R$, if $M_{r}^{n}:=n\prod_{j=1}^{d(r)}W_j< B$ and $M_{p(r)}^{n}:=n\prod_{j=1}^{d(r)-1}W_j\geq B$, recall that $p(r)$ is the parent of $r$. For now we just let $B$ be large; however, later our choice of $B$ will be more precise.
 To show (\ref{vertexball}) we consider all subtrees rooted at some vertex $r\in R$. We denote these subtrees by $T_{r,B},~r\in R$.
Recall from (\ref{sub}) that with ``large'' probability the cardinality $n_r$ is ``close'' to $M_{r}^{n}$. We will show that in fact we can replace $n_r$ by $M_{r}^{n}$ in our calculations.  Let $n_r$ be the number of balls and let $N_r$ be the number of nodes in the $T_{r,B}$ subtree.
Corollary \ref{lem6,1} implies that most vertices are in the $T_{r,B}$ subtrees, i.e., \begin{align}\label{haha}\mathbf{E}(N)=\mathbf{E}\Big(\sum_{r\in R} N_{r}\Big)+\mathcal{O}\Big(\frac{n}{B}\Big).
 \end{align}

The next lemma shows that the expected number of vertices in the $T_{r,B}$ subtrees with subtree sizes $n_{r}$ that differ significantly from $M_{r}^{n}$ is bounded by a ``small'' error term for large $B$. Since the variance of a Bin$(m,p)$ distribution is $m(p-p^2)$,
the Chebyshev and Markov inequalities give similarly as in (\ref{iliada1,1}) 
that for large $B$,
\begin{align} \label{skola}\mathbf{P}\Big(|n_{r}-M_{r}^{n}|\geq B^{0.6} \Big)
&\leq4\dfrac{\mathbf{E}\Big(M_{r}^{n}\Big)}{B^{1.2}}+\frac{\sum_{k=1}^{\infty}4sb^{-k}}{B^{0.6}}
\leq\frac{1}{B^{0.1}}.\end{align} From (\ref{haha}) we have
\begin{multline}\label{haj}\mathbf{E}(N)=\mathbf{E}\Big(\sum_{r\in R} N_{r}I\{|n_{r}-M_{r}^{n}|\geq B^{0.6}\}\Big)+\\
\mathbf{E}\Big(\sum_{r\in R} N_{r}I\{|n_{r}-M_{r}^{n}|\leq B^{0.6}\}\Big)+\mathcal{O}\Big(\frac{n}{B}\Big).
 \end{multline}

\begin{Lemma}\label{lem16} 
The expected value of the number of nodes that are not in the $T_{r,B},~r\in R$, subtrees with subtree size $n_r$ that differs from $M_{r}^{n}$ with at least $B^{0.6}$ balls, is 
\begin{align}\label{oj}
&\mathbf{E}(\sum_{r\in R} N_{r}I\{|n_{r}-M_{r}^{n}|\geq B^{0.6}\})=\mathcal{O}\Big(\frac{n}{B^{0.1}}\Big),\end{align} hence, from (\ref{haj}) \begin{align}\label{haj1}\mathbf{E}(N)=\mathbf{E}\Big(\sum_{r\in R} N_{r}I\{|n_{r}-M_{r}^{n}|\leq B^{0.6}\}\Big)+\mathcal{O}\Big(\frac{n}{B^{0.1}}\Big).\end{align}
\end{Lemma}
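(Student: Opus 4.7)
The plan is to reduce Lemma \ref{lem16} to a bound on $\mathbf{E}\bigl(\sum_{r\in R} n_r I_{A_r}\bigr)$, where $A_r := \{|n_r - M_r^n| \geq B^{0.6}\}$. Conditional on $n_r$ and on the independent copies of the split vectors attached to descendants of $r$, the subtree $T_{r,B}$ has the distribution of a split tree with $n_r$ balls, so Lemma \ref{lem11} gives $\mathbf{E}(N_r \mid n_r) \leq C n_r$ for a universal constant $C$. Hence
\[
\mathbf{E}\Bigl(\sum_{r\in R} N_r I_{A_r}\Bigr) \leq C\,\mathbf{E}\Bigl(\sum_{r\in R} n_r I_{A_r}\Bigr),
\]
so it suffices to show the right-hand side is $\mathcal{O}\bigl(n/B^{0.1}\bigr)$.

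Next I would exploit that $r \in R$ forces $M_r^n < B$ and decompose
\[
n_r I_{A_r} \leq 2B\, I_{A_r} + (n_r - 2B)^+.
\]
For the bounded piece, \eqref{skola} (combined with a summation over the random set $R$) gives $\mathbf{E}\bigl(\sum_{r\in R} I_{A_r}\bigr) \leq \mathbf{E}(|R|)/B^{0.1}$, while Corollary \ref{lem6,1} applied with $K = B$ yields $\mathbf{E}(|R|) \leq b(\mu^{-1}+o(1))\,n/B$, since each vertex of $R$ is a child of a vertex with $M^n \geq B$. Multiplying gives a contribution of order $\mathcal{O}(n/B^{0.1})$, the target.

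For the tail piece $(n_r - 2B)^+$, I would use the stochastic domination \eqref{binomial}: conditional on $\mathscr{G}_{d(r)}$, $n_r$ is bounded by $\mathrm{Bin}(n,\prod W_j)$ plus $\mathcal{O}(s)$ small auxiliary binomials, and on $\{r \in R\}$ the variance of the dominant binomial is at most $M_r^n < B$. Since $n_r - 2B \leq n_r - M_r^n - B$ on $\{r\in R\}$, the elementary inequality $\mathbf{E}\bigl(X\cdot I\{X > B\}\bigr) \leq \mathbf{E}(X^2)/B$ applied to $X = (n_r - M_r^n)^+$ gives $\mathbf{E}\bigl((n_r - 2B)^+ \mid \mathscr{G}_{d(r)}, r \in R\bigr) = \mathcal{O}(1)$. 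Summed over $R$ this contributes at most $\mathcal{O}(\mathbf{E}(|R|)) = \mathcal{O}(n/B)$, which is $o(n/B^{0.1})$ and absorbed, proving \eqref{oj}. Plugging \eqref{oj} into \eqref{haj} then yields \eqref{haj1}.

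The step I expect to be most delicate is the reduction in the first paragraph: one must carefully justify that, conditionally on $n_r$ and on the split vectors attached to descendants of $r$, $T_{r,B}$ is genuinely distributed as a split tree of cardinality $n_r$, so that Lemma \ref{lem11} applies with a constant $C$ uniform in $r$. The recursive structure of the split-tree generating algorithm makes this true, because once a ball enters $T_{r,B}$ its further descent depends only on the split vectors strictly below $r$; but the $s_0, s_1$ rules at the splitting ancestors of $r$, and confirming that the implicit constant in Lemma \ref{lem11} does not depend on the root, both require some bookkeeping.
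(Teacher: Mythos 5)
Your proposal is correct and follows essentially the same route as the paper: reduce $N_r$ to $n_r$ via Lemma~\ref{lem11} (and you rightly flag that the delicate point is that, conditionally on $n_r$, the subtree $T_{r,B}$ is itself a split tree of cardinality $n_r$, so the constant is uniform), then split the contribution into a bounded piece controlled by the probability estimate~\eqref{skola} together with $\mathbf{E}(|R|)=\mathcal{O}(n/B)$ from Corollary~\ref{lem6,1}, and a tail piece controlled by a conditional second-moment argument. The only cosmetic difference is the decomposition: you write $n_r I_{A_r}\le 2B\,I_{A_r}+(n_r-2B)^{+}$, whereas the paper splits on the event $\{n_r\le 2M_r^n\}$ versus $\{n_r>2M_r^n\}$ and bounds the tail $F$ by a conditional Cauchy--Schwarz/Markov estimate; since $M_r^n<B$ on $\{r\in R\}$, the two splits are interchangeable and both yield $E_1=\mathcal{O}(n/B^{0.1})$ and $E_2=\mathcal{O}(n/B)$. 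Your version of the tail estimate, via $\mathbf{E}(X I\{X>B\})\le\mathbf{E}(X^2)/B$, is a slightly more streamlined way of packaging the same variance bound $\mathbf{E}((n_r-M_r^n)^2\mid\mathscr{G}_{d(r)})\le M_r^n+\mathcal{O}(1)$.
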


We also sub-divide the $T_{r,B},~r\in R$, subtrees into smaller classes, wherein the $M_{r}^{n}$'s in each class are close to each-other. 
 Choose  $\gamma:=\epsilon^2$ and let $ Z:=\{B,B-\gamma B,B-2\gamma B,\dots,\epsilon B\}$, where  $\epsilon=\frac{1}{k}$ for some positive integer $k$.
We write $R_z\subseteq R,~z\in Z$, 
for the set of vertices $r\in R$, such that  $M_{r}^{n}\in [z-\gamma B,z)$ and $M_{p(r)}^{n}\geq B$. 
(Note that the intervals are of length $\gamma B$ and that the set $Z$ contains at most  $\frac{1}{\gamma}$ elements.) 
We write $|R_z|$ for the number of nodes in $R_z$. The next lemma is a result that we get by the use of renewal theory applied to the renewal function $U(t)$
in (\ref{renewal function}).
\begin{Lemma}\label{lem12} 
Let $S:=\{1, 1-\gamma,1-2\gamma,\dots ,\epsilon\}$, where $\gamma=\epsilon^2$.
Choose  $\alpha\in S$ and let
$\frac{n}{B}\rightarrow\infty$, then
\begin{align}\label{residual4.2}\dfrac{\mathbf{E}(|R_{\alpha B}|)}{\frac{n}{B}}=c_{\alpha} +o(1),
\end{align} for a constant $c_{\alpha}$ (only depending on $\alpha$), and also $\sum_{\alpha \in S}c_{\alpha}=\mathcal{O}\big(1\big)$, where the constant in $\mathcal{O}$ is not depending on $\epsilon$.
\end{Lemma}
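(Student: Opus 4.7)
The plan is to express $\mathbf{E}(|R_{\alpha B}|)$ as a sum over potential parents $u$ with $M_u^n \geq B$, and then evaluate this sum using Corollary \ref{lem6,1}. Each $r\in R_{\alpha B}$ is a child of a unique vertex $u$ with $M_u^n\ge B$, and $M_r^n = M_u^n V_i^u$ where $V_i^u$ is the corresponding component of $u$'s split vector. Since the split vector of $u$ is independent of $M_u^n$ and its components are identically distributed as $V$, conditioning on $M_u^n$ and summing over the $b$ children gives
\begin{align*}
\mathbf{E}\!\left[\#\{c\colon p(c)=u,\ c\in R_{\alpha B}\}\,\big|\,M_u^n\right]
= b\bigl[F_V(\alpha B/M_u^n) - F_V((\alpha-\gamma)B/M_u^n)\bigr],
\end{align*}
where $F_V$ is the distribution function of $V$. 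Writing $X_u:=M_u^n/B$ and $f(x):=F_V(\alpha/x)-F_V((\alpha-\gamma)/x)$, this yields
\begin{align*}
\mathbf{E}(|R_{\alpha B}|) \;=\; b\,\mathbf{E}\!\sum_{u\colon X_u\ge 1} f(X_u).
\end{align*}

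The next step is the asymptotic evaluation of this sum. Let $G(x):=\mathbf{E}|\{u\colon X_u\ge x\}|$; Corollary \ref{lem6,1} yields $G(x) = (\mu^{-1}+o(1))(n/B)/x$ as $n/B\to\infty$, uniformly on compact subsets of $[1,\infty)$. The function $f$ is the difference of two non-increasing functions and so has bounded variation, hence the sum can be written as a Riemann--Stieltjes integral against $-dG$. For any $x_1>1$, the tail is bounded by
\begin{align*}
\Big|\!\int_{x_1}^\infty f(x)\,d\mu_X(x)\Big| \;\le\; F_V(\alpha/x_1)\,G(x_1) \;=\; \mathcal{O}\!\left(F_V(\alpha/x_1)\,\frac{n}{B}\right),
\end{align*}
which is $o(n/B)$ after letting first $n/B\to\infty$ and then $x_1\to\infty$, since $F_V(0^+)=\mathbf{P}(V=0)=0$ by assumption (A1). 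Integration by parts on the compact range $[1,x_1]$ combined with the uniform asymptotic of $G$ then gives
\begin{align*}
\mathbf{E}(|R_{\alpha B}|) \;=\; b\mu^{-1}\,\frac{n}{B}\int_1^\infty \frac{f(x)}{x^2}\,dx + o\!\left(\frac{n}{B}\right),
\end{align*}
and the substitution $y=1/x$ identifies the coefficient as
\begin{align*}
c_\alpha \;:=\; b\mu^{-1}\int_0^1 \bigl[F_V(\alpha y)-F_V((\alpha-\gamma)y)\bigr]\,dy,
\end{align*}
a constant depending only on $\alpha$ (given the split tree parameters).

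For the second assertion, the intervals $\bigl[(\alpha-\gamma)y,\,\alpha y\bigr)$ for $\alpha\in S$ partition $\bigl[(\epsilon-\gamma)y,\,y\bigr)$, so telescoping yields
\begin{align*}
\sum_{\alpha\in S} c_\alpha \;=\; b\mu^{-1}\int_0^1 \bigl[F_V(y)-F_V((\epsilon-\gamma)y)\bigr]\,dy \;\le\; b\mu^{-1} \;=\; \mathcal{O}(1),
\end{align*}
the bound being independent of $\epsilon$ as required. The principal technical difficulty is the passage from the pointwise-in-$x$ asymptotic of $G(x)$ given by Corollary \ref{lem6,1} to an asymptotic for the Stieltjes integral against $dG$: since $G$ is not available in closed form, one must combine uniform control of $G$ on a compact range $[1,x_1]$ with the tail bound based on the vanishing of $F_V(\alpha/x)$ as $x\to\infty$, and let $x_1\to\infty$ slowly enough (relative to $n/B$) for both estimates to remain simultaneously valid.
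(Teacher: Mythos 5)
Your proof is correct and arrives at the same constant $c_\alpha$ as the paper (after the substitution $y=e^{-t}$, the paper's formula $c_\alpha=\frac{b}{\mu}\int_0^\infty e^{-t}\mathbf{P}\bigl(t-\ln\alpha<-\ln W\le t-\ln(\alpha-\gamma)\bigr)\,dt$ becomes exactly your $b\mu^{-1}\int_0^1\bigl[F_V(\alpha y)-F_V((\alpha-\gamma)y)\bigr]dy$), but the route is genuinely different. The paper writes $\mathbf{E}(|R_{\alpha B}|)$ as a function $Z(q)$ of $q=\ln(n/B)$, observes that (after the exponential tilting $\widehat{Z}(q)=e^{-q}Z(q)$) it satisfies a proper renewal equation driven by the probability measure $\omega$, and then invokes the key renewal theorem a second time to conclude $\widehat{Z}(q)\to c_\alpha$; in other words, it re-runs the machinery of Lemma \ref{lem5} for a new forcing term $G(t)=b\mathbf{P}(t-\ln\alpha<-\ln W\le t-\ln(\alpha-\gamma))$, with the d.R.i.\ verification for $\widehat{G}$ absorbed by the same monotone bound $\widehat{G}(t)\le be^{-t}$. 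You instead stay entirely at the level of the already-established counting asymptotic: you rewrite $\mathbf{E}(|R_{\alpha B}|)$ as a sum over potential parents $u$ of $bf(X_u)$ with $f$ of bounded variation, recognize the expected empirical measure of $\{X_u\}$ has tail $G(x)=\mathbf{E}|\{u:X_u\ge x\}|$ controlled by Corollary \ref{lem6,1}, and evaluate by integration by parts on $[1,x_1]$ plus a tail estimate for $[x_1,\infty)$. This buys a more elementary, self-contained argument that does not require setting up and solving a second renewal equation, at the cost of having to argue the passage from a pointwise $o(1)$ to a uniform one on compacts and to control the tail; you identify this as the crux, and it does go through by the standard two-parameter limit (fix $\delta$, choose $x_1$ with $F_V(\alpha/x_1)<\delta$ — this is where $\mathbf{P}(V=0)=0$ from (A1) is used — then take $n/B$ large; no need to let $x_1$ grow ``slowly'' with $n/B$, the double limit suffices). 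One small point worth flagging if you write this up: with $V$ possibly having atoms, $\mathbf{P}\bigl((\alpha-\gamma)B/M_u^n\le V_i<\alpha B/M_u^n\bigr)$ is $F_V\bigl((\alpha B/M_u^n)^-\bigr)-F_V\bigl(((\alpha-\gamma)B/M_u^n)^-\bigr)$, so the left/right-continuity conventions in $f$ and the telescoping need to be stated consistently; this does not affect the conclusion. Your telescoping identity for $\sum_{\alpha\in S}c_\alpha$ is the same as the paper's, again just written in the $y=e^{-t}$ variable.
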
 

 
Before proving these lemmas we show how their use leads to the proof of Theorem \ref{assumption1}.
\subsubsection{Proof of (\ref{vertexball}) in Theorem \ref{assumption1}}
\begin{proof}

For showing (\ref{vertexball}) it is enough to show that for two arbitrary values of  the cardinality $n$ and $\widehat{n}$, where $\widehat{n}\geq n$, we have 
\begin{align}\label{limsup}\Big|\frac{\mathbf{E}(N)}{n}-\frac{\mathbf{E}(\widehat{N})}{\widehat{n}}\Big|=\mathcal{O}\big(\epsilon\big)~~~~\mathrm{as}~n~\rightarrow\infty,~~\forall\epsilon>0.
\end{align}
Since (\ref{limsup}) implies that $\frac{\mathbf{E}(N)}{n}$ is Cauchy it follows that $\frac{\mathbf{E}(N)}{n}$ converges to some constant $\alpha$ as $n$ tends to infinity; hence, we deduce (\ref{vertexball}). 

We will now prove (\ref{limsup}). 

Recall from Section \ref{lemmas} that we will consider the subtrees 
$T_{r,B},~r\in R$, rooted at $r$; these are defined such that $M_{r}^{n}:=n\prod_{j=1}^{d(r)}W_j< B$ and $M_{p(r)}^{n}:=n\prod_{j=1}^{d(r)-1}W_j\geq B$.

Let $R'\subseteq R$ be the set of vertices such that $r\in R'$ if
\begin{align} \label{epsilon}|n_{r}-M_{r}^{n}|\leq B^{0.6}.
 \end{align}
Lemma \ref{lem16}  shows that we only need to consider the vertices in $r\in R'$.


Let $ R''\subseteq R'$ be the set of vertices such that $r\in R''$ if 
$r\in R'$ and
\begin{align}\label{epsilon2} 
\epsilon B< M_{r}^{n}< B.
\end{align}
We will now explain that it is enough to consider the vertices $r\in R''$.

Corollary \ref{lem6,1} for $K=B$ gives that the expected number of parents $p(r)$ such that $M_{p(r)}^{n}\geq B$ is $\mathcal{O}\Big(\frac{n}{B}\Big)$; thus, since they only have $b$ children each, also the expectation of $|R|$ is $\mathcal{O}\Big(\frac{n}{B}\Big)$. Hence, for $r\in R'$  by using (\ref{obvious}) in Lemma \ref{lem11}, we get that the  expected number of nodes in the $T_{r,B}$, $r\in R'$,  with $ M_{r}^{n}\leq \epsilon B$ is bounded by $\mathcal{O}\big(\epsilon n\big)$. 

From (\ref{haj1})in Lemma \ref{lem16}, we get \begin{align}\label{limsupha}\mathbf{E}(N)&=\mathbf{E}\Big(\sum_{r\in R''}N_r\Big)+\mathcal{O}\big(\epsilon n\big)+\mathcal{O}\Big(\frac{n}{B^{0.1}}\Big).
\end{align} 
Recall that  we sub-divide the $T_{r,B},~r\in R$, subtrees into smaller classes, wherein the $M_{r}^{n}$'s in each class are close to each-other, by introducing the subsets $R_z\subseteq R,~z\in Z$, where $ Z=\{B,B-\gamma B,B-2\gamma B,\dots,\epsilon B\}$.
Hence, (\ref{limsupha}) gives 
\begin{align}\label{limsupnu}&\mathbf{E}(N)
=\mathbf{E}\Big(\sum_{z\in Z}\sum_{r\in R'\cap  R_{z}}N_r\Big)+
\mathcal{O}\Big(\epsilon n\Big)+\mathcal{O}\Big(\frac{n}{B^{0.1}}\Big).
\end{align}

We will now apply Lemma \ref{lem14} to calculate the expected value in (\ref{limsupnu}). Let $r_z$ be an arbitrarily chosen node in $R'\cap R_z$, where $z\in Z$.
By using (\ref{epsilon}) and Lemma \ref{lem14}, for any node $r_z \in R'\cap R_z$, we get that the expected number of nodes in a tree with the number of balls in an interval $[z-\gamma B,z)$  is equal to
$\mathbf{E}\Big(N_{r_z}\Big)+\mathcal{O}\Big(\gamma B\Big)$.
By using (\ref{limsupnu}) this implies that  
\begin{align}\label{limsupnu.1}&\mathbf{E}(N)=\sum_{z\in Z}\mathbf{E}(|R'\cap R_z|)\Big(\mathbf{E}(N_{r_z})+\mathcal{O}\big(\gamma B\big)\Big)+\mathcal{O}\big(\epsilon n\big)+\mathcal{O}\Big(\frac{n}{B^{0.1}}\Big).
\end{align}
Define $a_x$ as the quotient of the expected number of vertices  in a tree with cardinality $\lfloor x \rfloor$ divided by $\lfloor x \rfloor$. Note from Lemma \ref{lem11} that $a_x=\mathcal{O}\big(1\big)$.

Recall from Lemma \ref{lem12} that $S=\{1, 1-\gamma,1-2\gamma,\dots ,\epsilon\}$. By using  (\ref{residual4.2}) in Lemma \ref{lem12} and applying (\ref{skola}) we have that for each choice of
$\gamma$ and $\alpha \in S$, there is a $\sigma_{\gamma}$ such that for a constant $c_{\alpha}$ (depending on $\alpha$),
\begin{align}\label{residual4.1}\Big|\dfrac{\mathbf{E}(|R'\cap R_{\alpha B}|)}{\frac{n}{B}}-c_{\alpha} \Big|\leq \gamma^{2}+\mathcal{O}\Big(\frac{1}{B^{0.1}}\Big),
\end{align}
whenever
 $\frac{n}{B}\geq \frac{1}{\sigma_{\gamma}}$. We now choose $B=\ln n$, where $n$ is the smallest of the two arbitrary values we start with (i.e., $\widehat{n}\geq n$).
Thus, we have by the choice of $B$ (for $n$ large enough) that $\frac{n}{B}\geq \frac{1}{\sigma_{\gamma}}$ so that (\ref{residual4.1}) holds.

Note that since $\sum_{\alpha \in S}c_{\alpha}=\mathcal{O}(1)$, we have that $\sum_{\alpha \in S}c_{\alpha}\frac{\mathcal{O}(B\gamma)}{B}=\mathcal{O}(\gamma)$.
Recall that  $\gamma:=\epsilon^2$.
Thus, for a constant $c_{\alpha}$ (depending on $\alpha$)  and $a_{\alpha B}=\mathcal{O}\big(1\big)$, we get from (\ref{limsupnu.1}) and (\ref{residual4.1}) that
\begin{align}\label{limsup.1}\mathbf{E}(N)&=n\sum_{\alpha \in S}c_{\alpha}\frac{1}{B}\Big(\alpha B a_{\alpha B} +\mathcal{O}(B\gamma)\Big)+n\sum_{\alpha \in S}\mathcal{O}\big(a_{\alpha B}\gamma^{2}\big)+\mathcal{O}\big(\epsilon n\big)=
\nonumber\\&=n\sum_{\alpha \in S}\alpha a_{\alpha B}c_{\alpha}+\mathcal{O}\big(\epsilon n\big).
\end{align}
In analogy we also get for $\widehat{n}\geq n$,
\begin{align}\label{limsup.1.2}&\mathbf{E}(\widehat{N})=
\widehat{n}\sum_{\alpha \in S}\alpha a_{\alpha B}c_{\alpha}+\mathcal{O}\big(\epsilon\widehat{n}\big).
\end{align}
Thus, (\ref{limsup}) follows, which shows (\ref{vertexball}).

\end{proof}

\subsubsection{Proof of (\ref{vertexball2}) in Theorem \ref{assumption1}}
\begin{proof}

First note that (\ref{obv}) in Lemma \ref{lem11} implies that $\mathbf{Var}(N)=\mathcal{O}\Big(n^2\Big)$. 
The purpose is to use the variance formula 
\begin{align}\label{varianceformula}\mathbf{Var}(Y)=\mathbf{E}\Big(\mathbf{Var}(Y|\mathscr{G})\Big)+
\mathbf{Var}\Big(\mathbf{E}(Y|\mathscr{G})\Big),
\end{align}
where $Y$ is a random variable and $\mathscr{G}$ is a sub-$\sigma$-field, see e.g.\cite[exercise 10.17-2]{gut}.
We consider  the subtrees $T_i,~1\leq i \leq b^D$ at depth $D=c\ln n$, choosing the constant  $c$ small enough so that the number of nodes $Z_D$ between depth $D$ and the root is $\mathcal{O}\big(n^{\epsilon}\big)$ for some arbitrary small $\epsilon$. Let $n_i$ be the number of balls and $N_i$ the number of nodes in $T_i$. 
Conditioned on $\Omega_D$, $N_i,~1\leq i\leq b^D$, are independent and it follows that,
\begin{align}\label{condvar}&\mathbf{Var}(N{|}\Omega_D)= \mathbf{Var}\Big(\sum_{i=1}^{b^D}N_i+Z_D|\Omega_D\Big)=\sum_{i=1}^{b^D}\mathbf{Var}(N_i|\Omega_D)=\sum_{i=1}^{b^D}\mathcal{O}(n_i^{2}).
 \end{align}
Taking expectation in (\ref{condvar}) gives
\begin{align}\label{condvar2}&\mathbf{E}\Big(\mathbf{Var}(N{|}\Omega_D)\Big)
=\sum_{i=1}^{b^D}\mathcal{O}(\mathbf{E}(n_i^{2})).
 \end{align}
Recall that $\Omega_D$ is the $\sigma$-field generated by $\{n_v,~d(v)\leq D\}$. 
\begin{Lemma}\label{lem10} For $D=c\ln n$ there is a $\delta>0$, such that
\begin{align}\label{subtreei}\mathbf{E}\Big(\sum_{i=1}^{b^D}n_i^2\Big)=\mathcal{O}(n^{2-\delta}).
 \end{align}
\end{Lemma}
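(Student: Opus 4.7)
The plan is to compare the true subtree sizes $n_i$ with the idealized sizes $M_i^n = n\prod_{j=1}^D W_{j,i}$ from (\ref{sub}) and exploit the multiplicative structure of the latter. From the stochastic upper bound (\ref{binomial}) I would write $n_i \leq B_i + R_i$ in distribution, where $B_i \mid \mathscr{G}_D \sim \mathrm{Bin}(n, M_i^n/n)$ and $R_i \leq s_1(D+1) = \mathcal{O}(\ln n)$ almost surely. Using $(B_i+R_i)^2 \leq 2B_i^2+2R_i^2$ and $\mathbf{E}(B_i^2 \mid \mathscr{G}_D) = M_i^n(1-M_i^n/n)+(M_i^n)^2 \leq M_i^n + (M_i^n)^2$, this yields
\[
\mathbf{E}(n_i^2) \leq 2\mathbf{E}\bigl((M_i^n)^2\bigr) + 2\mathbf{E}(M_i^n) + \mathcal{O}(\ln^2 n).
\]
Summing over the $b^D$ vertices at depth $D$ therefore reduces the task to controlling $\sum_i \mathbf{E}((M_i^n)^2)$, $\sum_i \mathbf{E}(M_i^n)$, and the correction $b^D \cdot \mathcal{O}(\ln^2 n)$.

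For the principal sum I would argue by a one-level recursion. For any vertex $u$ at depth $D-1$, its $b$ children $u_1,\dots,u_b$ satisfy $M_{u_j}^n = V_{j,u}\,M_u^n$, so $\sum_{j=1}^b (M_{u_j}^n)^2 = (M_u^n)^2 \sum_{j=1}^b V_{j,u}^2$. Since the split vector at $u$ is independent of $\mathscr{G}_{D-1}$, taking conditional expectation multiplies the parent's squared weight by the constant $\kappa := b\mathbf{E}(V^2)$. Iterating this $D$ times starting from $M_{\mathrm{root}}^n = n$ yields $\mathbf{E}\bigl(\sum_i (M_i^n)^2\bigr) = n^2 \kappa^D$. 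Under assumption (A1) we have $V \in (0,1)$ almost surely, hence $V^2 < V$ a.s., and therefore $\kappa < b\mathbf{E}(V) = 1$; for $D = c\ln n$ this gives $n^2 \kappa^D = n^{2-c|\ln\kappa|}$, which is $\mathcal{O}(n^{2-\delta})$ for $\delta := c|\ln\kappa|>0$. The two remaining terms are easy: $\sum_i M_i^n = n$ identically (the path probabilities on the complete $b$-ary skeleton sum to $1$ at every level), and $b^D \ln^2 n = n^{c \ln b}\ln^2 n = o(n)$ when $c$ is small enough (which is in any case needed to ensure $Z_D = \mathcal{O}(n^{\epsilon})$ in the proof of (\ref{vertexball2})).

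Combining these bounds gives $\mathbf{E}\bigl(\sum_i n_i^2\bigr) = \mathcal{O}(n^{2-\delta}) + \mathcal{O}(n) = \mathcal{O}(n^{2-\delta})$, possibly after shrinking $\delta$. The main obstacle is essentially verifying $\kappa < 1$; this is where the non-degeneracy of $V$ supplied by (A1) enters decisively, ruling out the pathological case $\kappa = 1$. Once this is in hand the mechanism is transparent: the doubly exponential growth $b^D$ in the number of subtrees at depth $D$ is overcome by the geometric contraction $\kappa^D$ in the per-vertex second moment of $M_v^n$, while the binomial variance and the bounded corrections appearing in (\ref{binomial}) are of strictly lower order.
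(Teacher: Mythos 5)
Your argument is correct and follows essentially the same route as the paper: both rely on the stochastic bound $n_i \leq \mathrm{Bin}(n,\prod_{j=1}^D W_j) + \mathcal{O}(D)$ from (\ref{binomial}), the second moment of the binomial, and the key inequality $b\mathbf{E}(V^2) < b\mathbf{E}(V) = 1$ (valid since $V\in(0,1)$ a.s.\ under (A1)) to beat the factor $b^D$. The only cosmetic difference is that you compute $\mathbf{E}\bigl(\sum_i (M_i^n)^2\bigr)=n^2\kappa^D$ by a level-by-level recursion on the tree, whereas the paper bounds $\mathbf{E}(n_i^2)$ per vertex via $\prod_j\mathbf{E}(W_j^2)=(\mathbf{E}(V^2))^D$ and then multiplies by $b^D$; these are the same calculation.
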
 

\begin{proof}

The representation of subtree sizes in split trees described in (\ref{binomial}) in Section \ref{Subtrees} gives 
in particular that conditioning on $\mathscr{G}_D$, $n_i$ for $i$ at depth $D$  is bounded from above (in stochastic sense) by 
\begin{align}\label{binomial5}
 \mathrm{Bin}(n,\prod_{j=1}^{D}W_j)+s_1D,
\end{align}
where $W_{j},j\in \{1,\dots, D\}$, are i.i.d.\ random variables distributed as $V$.
The fact that the second moment of a $\mathrm{Bin}(m,p)$ is  $m^2p^2+mp-mp^2$ and the bound of $n_i$ in (\ref{binomial5}) give 
\begin{align*}
 \mathbf{E}({n_{i}}^2|\mathscr{G}_{D})\leq n^{2}\prod_{j=1}^{D}W_j^{2}+\mathcal{O}(n D\prod_{j=1}^{D}W_j)+\mathcal{O}(D^2).
\end{align*}
Note that $\mathbf{E}(W_j^2)<\mathbf{E}(W_j)=\frac{1}{b}$, since $W_j\in (0,1)$.  Hence, there is an $\epsilon>0$ such that
\begin{align}\label{subtree2,2}\mathbf{E}({n_{i}}^2)&\leq n^2\prod_{j=1}^{D} {\mathbf{E}({W_j}^2})+\mathcal{O}\Big(\frac{nD}{b^D}\Big)+\mathcal{O}(D^2)\nonumber
\\&\leq\frac{n^2}{(b+\epsilon)^{D}}+\mathcal{O}\Big(\frac{nD}{b^D}\Big)+\mathcal{O}(D^2),\end{align}
and thus there is a $\delta>0$ such that
\begin{align*}\mathbf{E}\Big(\sum_{i=1}^{b^D}n_i^2\Big)=\mathcal{O}(n^{2-\delta}),
 \end{align*}
which shows (\ref{subtreei}).
\end{proof}

Thus, (\ref{condvar2}) and (\ref{subtreei}) in Lemma \ref{lem10} give
\begin{align}\label{condvar3}&\mathbf{E}\Big(\mathbf{Var}(N{|}\Omega_D)\Big)=\mathcal{O}(n^{2-\delta}).
 \end{align}

By applying (\ref{vertexball}) in Theorem \ref{assumption1} gives
\begin{align}\label{vantevarde}&\mathbf{E}(N{|}\Omega_D)=\sum_{i=1}^{b^D}\Big(\alpha n_i+o(n_i)\Big) +\mathbf{E}(Z_D{|}\Omega_D).
 \end{align}
Applying (\ref{vantevarde}) gives
\begin{align}\label{condvar4}&\mathbf{Var}\Big(\mathbf{E}(N{|}\Omega_D)\Big)
=\mathbf{Var}\Big(\alpha n+o(n)\Big)=o(n^2).
 \end{align}

 Thus, by applying the variance formula in (\ref{varianceformula}) 
we get from (\ref{condvar3}) and (\ref{condvar4}) that $\mathbf{Var}(N)=o(n^2)$.
\end{proof}

\begin{rem} 
The proof shows that if
we can improve the result in (\ref{vertexball}) in Theorem \ref{assumption1} such that $\mathbf{E}(N)=\alpha n+\mathcal{O}(n^{1-c_1})$ for some constant $c_1>0$, we will also get a sharper result for the variance, i.e., $\mathbf{Var}(N)=o(n^{2-c_2})$ for some constant $c_2>0$.
\end{rem}

\subsubsection{Proofs of the Lemmas of Theorem \ref{assumption1}}\label{proofslemmas}

\begin{proof}[Proof of Lemma \ref{lem11}]
 (Note that if $s_0>0$ it is always true that $N\leq n$ and if $s_1>0$ we always have $N\leq 2n$.)
For $s_0=s_1=0$ we can argue as follows: 
When a new ball is added to the tree the expected number of additional nodes is bounded by the expected number of nodes one gets from a splitting node.
Let $Z$ be the number of nodes that one gets when a node of $s+1$ balls splits.
Then
\begin{align}\label{bound}
\mathbf{E}(Z)=\sum_{k=1}^{\infty}k\mathbf{P}(Z=k).
\end{align}
Note that once a node gives balls to at least 2 children the splitting process ends.
Thus, \begin{align*}
       \mathbf{P}\Big(Z=k\big{|}\mathscr{G}_k\Big)=\mathcal{O}\Big(\sum_{v,~d(v)=k} \prod_{j=1}^{k}W_{j,v}^{s+1}\Big).
      \end{align*}
Hence, (\ref{bound}) implies,
\begin{align}\label{bound2}&\mathbf{E}(Z)=
\sum_{k=1}^{\infty}k\mathcal{O}\Big((b\mathbf{E}(V^{s+1}))^k\Big).
\end{align}
There is a $\delta>0$ such that $b\mathbf{E}(V^{s+1})\leq b^{-\delta}$ since 
\begin{align}\label{axel2}
\mathbf{E}(V^{s+1})<\mathbf{E}(V)=\frac{1}{b},
\end{align} 
for $V\in (0,1)$. Thus, (\ref{bound2}) gives
\begin{align}\label{bound2,1}\mathbf{E}(Z)&=
\sum_{k=1}^{\infty}k\mathcal{O}\Big(b^{-k\delta}\Big)=
\mathcal{O}\Big(\frac{b^{-\delta}}{(1-b^{-\delta})^2}\Big)=\mathcal{O}(1).
\end{align}
This shows (\ref{obvious}). 

Now we show (\ref{obv}).
Note that (\ref{obv}) obviously holds if $s_1>0$ or $s_0>0$, since then $N\leq 2n$.
Recall that $Z$ is the number of nodes that one gets when a node of $s+1$ balls splits.
Then by the well-known Minkowski's inequality 
\begin{align}\label{bound3}
\mathbf{E}(N^2)\leq n^2 \mathbf{E}(Z^2).
\end{align}
By similar calculations as in (\ref{bound2})--(\ref{bound2,1}) we get that for some constant $\delta>0$, 
\begin{align}\label{bound4}
\mathbf{E}(Z^2)&\leq \sum_{k=1}^{\infty}k^2\mathbf{P}(Z=k)=
\sum_{k=1}^{\infty}k^2\mathcal{O}\Big(b^{-k\delta}\Big)=\mathcal{O}(1)
.\end{align}
Thus, (\ref{obv}) follows from (\ref{bound3}) and (\ref{bound4}).

\end{proof}

\begin{proof}[Proof of Lemma \ref{lem11}]
The proof of this lemma is in analogy with the proof of (\ref{obvious}) in 
Lemma \ref{lem11}. Adding one ball to the tree will only increase the vertices if it is added to a leaf with $s$ balls. Recall that $Z$ is the number of nodes that one gets when a node of $s+1$ balls splits. Hence, (\ref{bound2}) gives $\mathbf{E}(Z)=\mathcal{O}\big(1\big)$, implying that $K$ balls can only create $\mathcal{O}\big(K\big)$ additional nodes.
\end{proof}

\begin{proof}[Proof of Lemma \ref{lem16}]
By applying $(\ref{skola})$ we get that with probability at least $1-\frac{1}{B^{0.1}}$,
\begin{align} \label{epsilonhej}|n_{r}-M_{r}^{n}|\leq B^{0.6}.
 \end{align}

We have 
\begin{align}\label{hoa}&\mathbf{E}\Big(\sum_r n_{r}I\{|n_{r}-M_{r}^{n}|\geq B^{0.6}\}\Big)=E_1+E_2,
 \end{align}
where
\begin{align*}E_1&=\mathbf{E}\Big(\sum_r n_{r}I\{|n_{r}-M_{r}^{n}|\geq B^{0.6}\}I\{n_{r}\leq 2M_{r}^{n}\}\Big),\nonumber\\E_2&=\mathbf{E}\Big(\sum_r n_{r}I\{|n_{r}-M_{r}^{n}|\geq B^{0.6}\}
I\{n_{r}> 2M_{r}^{n}\}\Big).
 \end{align*}  Hence, the facts that $\sum_r M_{r}^{n}=\mathcal{O}\big(n\big)$ and that the bound in (\ref{epsilonhej}) holds with probability $1-\frac{1}{B^{0.1}}$,  give
\begin{align*}E_1&\leq \mathbf{E}\Big(\sum_r 2M_{r}^{n}I\{|n_{r}-M_{r}^{n}|\geq B^{0.6}\}\Big)=\mathcal{O}\Big(\frac{n}{B^{0.1}}\Big).
 \end{align*}

Recall that $R$ is the set of vertices such that $r\in R$, if $r$ is the root of a $T_{r,B}$ subtree.
We obviously have 
\begin{align*}E_2&\leq \mathbf{E}\Big(\sum_v 2(n_{v}-M_{v}^{n})I\{n_{v}> 2M_{v}^{n}\}I\{v\in R\}\Big).
 \end{align*}

By summing over vertices $v$ at depth $k$ we get 
\begin{align}\label{hej1}
E_2\leq \sum_{k=0}^{\infty}2b^k\mathbf{E}\Big( (n_{v}-M_{v}^{n})I\{n_{v}>2M_{v}^{n}\}\Big)
\mathbf{P}\Big(v\in R\Big).
\end{align}
We write $F$ for the expected value in (\ref{hej1}), i.e., \begin{align*}F:=\mathbf{E}\Big((n_{v}-M_{v}^{n})I\{n_{v}> 2M_{v}^{n}\}\Big).
 \end{align*}
Hence, the conditional Cauchy-Schwarz and the conditional Markov inequalities give
\begin{align}\label{ej1}  F:&\leq \mathbf{E}\left(\sqrt{\mathbf{E}\Big( \Big(n_v-M_{v}^{n}\Big)^2\Big|\mathscr{G}_d\Big)}\sqrt{\mathbf{P}\Big(n_{v}> 2M_{v}^{n}\Big|\mathscr{G}_d\Big)}\right)\nonumber\\&\leq \min\left(\mathbf{E}\left(\frac{\mathbf{E}\Big( \Big(n_v-M_{v}^{n}\Big)^2\Big|\mathscr{G}_d\Big)}{M_{v}^{n}}\right),\mathbf{E}\left(\sqrt{\mathbf{E}\Big( \Big(n_v-M_{v}^{n}\Big)^2\Big|\mathscr{G}_d\Big)}\right)\right).
 \end{align}
From (\ref{binomial}) we have that  for all $v$ with $d(v)=d$, conditioned on $\mathscr{G}_d$ (i.e., the $\sigma$-field generated by $W_{j,v},j\in\{1,\dots, d\}$), $n_v\leq n'_{v}+n''_{v}$, where
\begin{align*}n'_{v}&:=\mathrm{Bin}(n,\prod_{j=1}^{d}W_{j,v}),\\ n''_{v}&:=\mathrm{Bin}(s_1,\prod_{j=2}^{d}W_{j,v})+\dots + \mathrm{Bin}(s_1,W_{d,v})+s_{1}.
 \end{align*}
Thus, (\ref{ej1}) gives for $M_{v}^{n}\geq 1$,
\begin{align}\label{tal}F&\leq \mathbf{E}\left(\frac{\mathbf{E}\Big( \Big(n_v-M_{v}^{n}\Big)^2\Big|\mathscr{G}_d\Big)}{M_{v}^{n}}\right)\nonumber
\\&\leq \mathbf{E}\left(\frac{\mathbf{E}\Big( \Big(n'_v-M_{v}^{n}\Big)^2\Big|\mathscr{G}_d\Big)}{M_{v}^{n}}+
 \frac{\mathbf{E}\Big( (n''_v)^{2}+2n'_vn''_v-2n''_vM_{v}^{n}\Big|\mathscr{G}_d\Big)}{M_{v}^{n}}\right)\nonumber
\\&\leq \mathbf{E}\left(\frac{\mathbf{E}\Big( \Big(n'_v-M_{v}^{n}\Big)^2\Big|\mathscr{G}_d\Big)}{M_{v}^{n}}\right)+
\mathbf{E}\Big( (n''_v)^{2}\Big)+2\mathbf{E}\Big(n''_v\Big),
\end{align} 
where we in the last equality apply that $\mathbf{E}\Big(n'_v\Big)=M_{v}^{n}$.
For $M_{v}^{n}< 1$ we apply that (\ref{ej1}) gives 
\begin{align}\label{tal2}F\leq \mathbf{E}\left(\sqrt{\mathbf{E}\Big( \Big(n_v-M_{v}^{n}\Big)^2\Big|\mathscr{G}_d\Big)}\right).
 \end{align}
By applying the fact that the variance of a Bin$(m,p)$ distribution is $m(p-p^2)$ we get
$\mathbf{E}\Big( \Big(n'_v-M_{v}^{n}\Big)^2\Big|\mathscr{G}_d\Big)\leq M_{v}^{n}$, and 
from the Minkowski's inequality we easily deduce that $\mathbf{E}\Big( (n''_v)^{2}\Big)=\mathcal{O}(1)$.
Hence, by using that we can bound $F$ as in (\ref{tal})  for $M_{v}^{n}\geq 1$, and by the bound in (\ref{tal2}) for $M_{v}^{n}< 1$, we get that $F=\mathcal{O}(1)$.
Thus, from (\ref{hej1}) we get \begin{align}\label{ej2}
E_2\leq \sum_{k=0}^{\infty}b^k\mathcal{O}\big(1\big)\mathbf{P}(v\in R).
\end{align}
Note that $v\in R$ only if $M_{p(v)}^{n}\geq B$. Hence, by applying Corollary  \ref{lem6,1} for $K=B$, we get from (\ref{ej2}) that $E_2=\mathcal{O}\big(\frac{n}{B}\big)$.
By applying Lemma \ref{lem11} in combination with (\ref{hoa}) and using the bounds of $E_1$ and $E_2$  we get 
\begin{align}\label{hoho}\mathbf{E}\big(\sum_r N_rI\{|n_{r}-M_{r}^{n}|\geq B^{0.6}\}\big)&=\mathcal{O}\Big(\mathbf{E}\big(\sum_r n_rI\{|n_{r}-M_{r}^{n}|\geq B^{0.6}\}\big)\Big)
\nonumber\\&=\mathcal{O}\Big(\frac{n}{B^{0.1}}\Big).
 \end{align}

\end{proof}

\begin{proof}[Proof of Lemma \ref{lem12}]

Recall the definition of $Y_k=-\sum_{j=1}^{k}\ln W_j$ and $\nu(t)=b\mathbf{P}(-\ln W_j\leq t)$. Also recall that we write $S=\{1, 1-\gamma,1-2\gamma,\dots ,\epsilon\}$ for $\gamma=\epsilon^2$.
We have for $\alpha \in S$
\begin{align*}&\mathbf{E}(|R_{\alpha B}|)=\sum_{k=0}^{\infty}b^{k+1} \Big(\mathbf{P}\Big(\{Y_k-\ln W_{k+1}> \ln \frac{n}{B}-\ln \alpha\}\bigcap\{Y_k\leq\ln \frac{n}{B}\}\Big)\nonumber\\&-\mathbf{P}\Big(\{Y_k- \ln W_{k+1}>\ln \frac{n}{B}-\ln\Big( \alpha-\gamma\Big)\}\bigcap\{Y_k\leq \ln \frac{n}{B}\}\Big)\Big).
\end{align*}
We write $q:=\ln \frac{n}{B}$.
From the definition of $U(t)$ we have that $\mathbf{E}(|R_{\alpha B}|)$  is equal to
\begin{multline*}Z(q):=\int_{0}^{q}b
\Big(\mathbf{P}\Big(- \ln W_{k+1}>q-t-\ln \alpha\Big)\\-\mathbf{P}\Big(- \ln W_{k+1}> q-t-\ln\Big( \alpha-\gamma\Big)\Big)\Big)dU(t).
\end{multline*}
Hence,
\begin{multline}\label{hopp}Z(q):=\int_{0}^{q}b\mathbf{P}\Big(q-t-\ln \alpha<- \ln W_{k+1}\leq q-t-\ln\Big( \alpha-\gamma\Big)\Big)dU(t).
\end{multline}
We write
 \begin{align*}G(t):=b
\mathbf{P}\Big(t-\ln \alpha<- \ln W_{k+1}\leq t-\ln\Big( \alpha-\gamma\Big)\Big).
\end{align*}
Thus,
 \begin{align*}Z(q)=(G*dU)(q).
\end{align*}

Recall that we write $d\omega(t)=e^{-t}d\nu(t)$ where $\omega(t)$ is a probability measure.
Recall from (\ref{ny}) that we have
 \begin{align*} \widehat{U}(t)=\widehat{\nu}(t)+(\widehat{U}*d\omega)(t),
\end{align*}
where $\widehat{U}(t):=e^{-t}U(t)$ and $\widehat{\nu}(t):=e^{-t}\nu(t)$.
Thus, by using \cite[Theorem VI.5.1]{asmussen}
we have for $\widehat{Z}(x)=e^{-x}Z(x)$ and  $\widehat{G}(x)=e^{-x}G(x)$ that
 \begin{align*}\widehat{Z}(q)=(\widehat{G}*d\omega)(q).
\end{align*}
By using (\ref{hopp}) this implies that 
\begin{multline}\label{hopp1}\widehat{Z}(q)=\int_{0}^{q}be^{t-q}
\mathbf{P}\Big(q-t-\ln \alpha<- \ln W_{k+1}\leq q-t-\ln\Big( \alpha-\gamma\Big)\Big)d\omega(t).
\end{multline}

By using the key renewal theorem \cite[Theorem II.4.3]{gut2} applied to $\widehat{U}(t)$ we get
\begin{align}\label{hopp2}\lim_{{q\rightarrow\infty}}\widehat{Z}(q)=\frac{b}{\mu}\int_{0}^{\infty}e^{-t}
\mathbf{P}\Big(t-\ln \alpha<- \ln W_{k+1}\leq t-\ln\Big( \alpha-\gamma\Big)\Big)dt.
\end{align}
Note that $\lim_{{q\rightarrow\infty}}\widehat{Z}(q):=c_{\alpha}$, for some constant $c_{\alpha}$ only depending on $\alpha$. 

Thus, by using $\widehat{Z}(x)=e^{-x}Z(x)$ we get that 
\begin{align*}&\mathbf{E}(|R_{\alpha B}|)
=\frac{n}{B}c_{\alpha}+o\big(\frac{n}{B}\big),
\end{align*} for the constant $c_{\alpha}$ (only depending on $\alpha$), which shows (\ref{residual4.2}).

Also note that  we have \begin{align}\label{hopp3}\sum_{\alpha \in S}c_{\alpha}&=\frac{b}{\mu}\int_{0}^{\infty}e^{-t}\sum_{\alpha \in S}
\mathbf{P}\Big(t-\ln \alpha<- \ln W_{k+1}\leq t-\ln\Big( \alpha-\gamma\Big)\Big)dt\\&=
\frac{b}{\mu}\int_{0}^{\infty}e^{-t}
\mathbf{P}\Big(t<- \ln W_{k+1}\leq t-\ln\big( \epsilon-\gamma\big)\Big)dt\leq \frac{b}{\mu}.
\end{align}

\end{proof}

\subsection{Proof of Theorem \ref{lemma1}}\label{2}

\begin{proof}
We use large deviations to show this theorem (in fact we get a sharper bound of the number of bad nodes).
Note that a vertex $v$  belongs to the tree if and only if $n_v\geq 1$. 
Recall that there is an upper bound of $n_v$ with $d(v)=d$ in (\ref{binomial}) above, i.e.,
conditioning on $\mathscr{G}_d$ in stochastic sense, 
\begin{align}\label{bolldel} n_v &\leq \mathrm{Bin}(n,\prod_{j=1}^{d}W_{j,v})+\mathrm{Bin}(s_1,\prod_{j=2}^{d}W_{j,v})\nonumber\\&+\mathrm{Bin}(s_1,\prod_{j=3}^{d}W_{j,v})+\dots + \mathrm{Bin}(s_1,W_{d,v})+s_1 ,
\end{align}
where $W_{j,v},~j\in\{1,\dots, d\}$, are i.i.d.\ random variables distributed as $V$.
It is enough to just consider the first term $\mathrm{Bin}(n,\prod_{j=1}^{d}W_{j,v})$ in (\ref{bolldel}), and prove that the number of bad nodes with $\mathrm{Bin}(n,\prod_{j=1}^{d}W_{j,v})\geq 1$ is bounded by 
$\mathcal O_{L^{1}} \Big(\frac {n}{\ln^{k+1}{n}}\Big)$, where we choose $k$ large enough. 
If $s_1=0$, $\mathrm{Bin}(n,\prod_{j=1}^{d}W_{j,v})$ is the only term in (\ref{bolldel}).
We now explain the fact that we can ignore the terms in $n_v$ that occurs because of the parameter $s_1$.
Assume that for split trees with $s_1=0$, the number of bad nodes is bounded by 
$\mathcal O_{L^{1}} \Big(\frac {n}{\ln^{k+1}{n}}\Big)$. 
We first consider the vertices  with $d\leq \mu^{-1}\ln{n}-\ln^{0.5+\epsilon}{n}$.
If $s_1>0$, we assume that we first add the $n$ balls as in the construction of a split tree with the parameter $s_1=0$.
 Hence,  the number of vertices $v$ with $d\leq \mu^{-1}\ln{n}-\ln^{0.5+\epsilon}{n}$, is bounded by 
$\mathcal O_{L^{1}} \Big(\frac {n}{\ln^{k+1}{n}}\Big)$.
We now repay the subtree sizes for their potential loss of balls because of $s_1>0$.
A vertex $v$ at depth $d$ can at most have a loss of $s_1d$ balls in the subtree rooted at $v$. These balls cannot give more than 
$s_1bd$ nodes to the tree (since only if  $s_0=s_1=0$ it is  possible for an increment of more than $b$ nodes when a new ball is added to the tree). Thus, since $d\leq \mu^{-1}\ln{n}$ and the fact that we assume that we have $\mathcal O_{L^{1}} \Big(\frac {n}{\ln^{k+1}{n}}\Big)$ nodes before the repayment of the loss of balls, these additional balls cannot give more than $\mathcal O_{L^{1}} \Big(\frac {n}{\ln^{k}{n}}\Big)$ nodes. 
Now we consider the vertices with  $d\geq \mu^{-1}\ln{n}+\ln^{0.5+\epsilon}{n}$.
 Again we first distribute the $n$ balls assuming that $s_1=0$, and then repay for the potential loss of balls in the subtrees if $s_1>0$.
First note that for $d=\mathcal{O}(\ln n)$ we can argue as in the previous case. This means that the number of nodes with 
$\mu^{-1}\ln{n}+\ln^{0.5+\epsilon}{n}\leq d\leq K\ln{n}$ for some arbitrary constant $K$ is bounded by $\mathcal O_{L^{1}} \Big(\frac {n}{\ln^{k}{n}}\Big)$.
For larger $d$ we argue as follows:
For any constant $K_1>0$,
\begin{align*} & \mathrm {mBin}(s_1,\prod_{j=2}^{d}W_{j,v})+\mathrm {mBin}(s_1,\prod_{j=3}^{d}W_{j,v})+\dots+ \mathrm {mBin}(s_1,W_{d,v})+s_1 \nonumber\\& \leq \mathrm {mBin}(s_1,\prod_{j=2}^{d}W_{j,v})+\dots +\mathrm {mBin}(s_1,\prod_{j=d-\lfloor K_1\ln n \rfloor}^{d}W_{j,v})+ K_1s_1\ln n.
\end{align*}

The Markov inequality gives,
\begin{align} &\mathbf{P}(\mathrm {mBin}(s_1,\prod_{j=2}^{d}W_{j,v})+\dots +\mathrm {mBin}(s_1,\prod_{j=d-\lfloor K_1\ln n\rfloor}^{d} W_{j,v})\geq 1\Big)\nonumber\\& \leq \mathbf{E}\Big(\mathrm {Bin}(s_1,\prod_{j=2}^{d}W_{j,v})+\dots+\mathrm {Bin}(s_1,\prod_{j=d-\lfloor K_1\ln n\rfloor}^{d}W_{j,v})\Big)=\mathcal O \Big(b^{-K_1\ln n}\Big),\end{align}
where the last equality is obtained by first condition on $\mathscr{G}_d$  and then take the expected value twice.
Thus, the expected number of vertices that gets a repayment of at least $K_1s_1\ln n+2$ balls is bounded by 
$\mathcal O \Big(\frac {n}{b^{K_1\ln n}}\Big)$. Since $s_1>0$, we can assume that $d\leq n$.
Hence, the expected number of balls of this contribution is $\mathcal O \Big(\frac {n^2}{b^{K_1\ln n}}\Big)$; choosing $K_1$ large enough  this number is just $o(1)$ and can thus be ignored.

It remains to prove that if $s_1=0$ the number of vertices $v$, where  $d(v)\leq \mu^{-1}\ln{n}-\ln^{0.5+\epsilon}{n}$ or $d(v)\geq \mu^{-1}\ln{n}+\ln^{0.5+\epsilon}{n}$,  with $n_v\geq 1$ is bounded by $\mathcal O_{L^{1}} \Big(\frac {n}{\ln^{k+1}{n}}\Big)$ for any constant $k$.
Note that an upper bound of the expected number of vertices at depth $d$  is given by \begin{align}\label{upperboundprofile} b^d \mathbf{P}(n_v\geq 2),
 \end{align} where $v$ is a vertex at depth $d-1$. Note that this is true even in the case $s_0=0$, since for all internal nodes $n_v\geq s+1$.
Choosing $t>0$, an application of the Markov inequality implies that
\begin{align}\label{cannes}\mathbf{P}(n_v\geq 2)\leq \mathbf{P}(n_v(n_v-1)\geq 2)&\leq\nonumber\\\mathbf{P}(n_v^{t}(n_v-1)^t\geq 2^t)&\leq \frac{\mathbf{E}(n_v^{t}(n_v-1)^t)}{2^t}.
\end{align}
Thus, an upper bound of the expected profile for the vertices at depth $d$ is
\begin{align}\label{cannes2}b^d\mathbf{E}(n_v^{t}(n_v-1)^t),\end{align}
where $v$ is a  is a vertex at depth $d-1$.

First we show that the number of vertices $v$ (assuming $s_1=0$) where $d(v)\geq \mu^{-1}\ln{n}+\ln^{0.5+\epsilon}{n}$ is bounded by $\mathcal O_{L^{1}} \Big(\frac {n}{\ln^{k+1}{n}}\Big)$. We prove this by choosing $t=\frac{1+\epsilon(n)}{2}$, where $\epsilon(n)>0$ is a decreasing function of $n$ that we specify below, and show that
 \begin{align}\label{cannes9}\sum_{d=\lfloor \mu^{-1}\ln{n}+\ln^{0.5+\epsilon}{n}\rfloor-1}^{\infty}b^d\mathbf{E}\big(n_v^{\frac{1+\epsilon(n)}{2}}(n_v-1)^{\frac{1+\epsilon(n)}{2}}\big)=\mathcal O\Big(\frac {n}{\ln^{k+1}{n}}\Big).
\end{align}
Let $X_d$ be a mixed binomial $(n,\prod_{j=1}^{d}W_j)$, where $W_j,~j\in{1,\dots,d}$ are i.i.d.\ random variables distributed as $V$.
To show (\ref{cannes9}) it is enough to show that the expected value of 
\begin{align}\label{cannes8}\sum_{d=\lfloor \mu^{-1}\ln{n}+\ln^{0.5+\epsilon}{n}\rfloor-1}^{\infty}b^d\mathbf{E}\Big(X_d^{\frac{1+\epsilon(n)}{2}}(X_d-1)^{\frac{1+\epsilon(n)}{2}} \big{|} \mathscr{G}_d\Big), 
\end{align} is $\mathcal O \Big(\frac {n}{\ln^{k+1}{n}}\Big)$.
That this is enough follows because of the bound of $n_v$ in (\ref{bolldel}), since we assume that $s_1=0$.
Suppose that $\epsilon(n)<1$, thus the Lyapounov inequality (which is a special case of the well-known H\"{o}lder inequality) gives 
\begin{align}\label{binomial4}\mathbf{E}\Big(X_d^{\frac{1+\epsilon(n)}{2}}(X_d-1)^{\frac{1+\epsilon(n)}{2}}\big{|} \mathscr{G}_d\Big)
&\leq (n^2-n)^{\frac{1+\epsilon(n)}{2}}\prod_{j=1}^{d}W_j^{1+\epsilon(n)}\nonumber\\& \leq \Big(n\prod_{j=1}^{d}W_j\Big)^{1+\epsilon(n)}.
\end{align}
Hence, to show (\ref{cannes9}) we deduce from the right hand-side of the second inequality in (\ref{binomial4}) that it is enough to show that 
\begin{align}\label{cannes3}S_1:=\sum_{d=\lfloor \mu^{-1}\ln{n}+\ln^{0.5+\epsilon}{n}\rfloor-1}^{\infty}b^d\big(\mathbf{E}(W_j^{1+\epsilon(n)})\big)^dn^{1+\epsilon(n)}
=\mathcal O\Big(\frac {n}{\ln^{k+1}{n}}\Big).\end{align}
Taylor expansion gives \begin{align}\label{cannes4}&W_j^{1+\epsilon(n)}=W_je^{\epsilon(n)\ln W_j}=W_j\Big(1+\epsilon(n)\ln W_j+ \frac{\ln^2 W_j}{2}\epsilon^2(n)\Big)\nonumber\\&+\mathcal{O}\Big(W_j\epsilon^3(n) \ln ^3W_j\Big).\end{align}
Thus, by taking expectations in (\ref{cannes4}) we get
\begin{align}\label{cannes5}S_1&=\sum_{d=\lfloor \mu^{-1}\ln{n}+\ln^{0.5+\epsilon}{n}\rfloor-1}^{\infty}\bigg(1-\mu\epsilon(n)+\frac{\sigma^2+\mu^2}{2}\epsilon^2(n)+\mathcal{O}\big(\epsilon^3(n)\big)\bigg)^dn^{1+\epsilon(n)}\nonumber\\&=
\sum_{d=\lfloor \mu^{-1}\ln{n}+\ln^{0.5+\epsilon}{n}\rfloor-1}^{\infty}e^{\ln\Big(1-\mu\epsilon(n)+\frac{\sigma^2+\mu^2}{2}\epsilon^2(n)+
\mathcal{O}\big(\epsilon^3(n)\big)\Big)d+\ln{n}(1+\epsilon(n))}\nonumber\\&=
\sum_{d=\lfloor \mu^{-1}\ln{n}+\ln^{0.5+\epsilon}{n}\rfloor-1}^{\infty}
e^{\Big(-\mu\epsilon(n)+\frac{\sigma^2}{2}\epsilon^2(n)+\mathcal{O}\big(\epsilon^3(n)\big)\Big)d+\ln{n}(1+\epsilon(n))}\nonumber\\&=\mathcal{O}\bigg(\frac{n^{1-\mu\epsilon(n)\ln^{-0.5+\epsilon}n+
\mathcal{O}\Big(\epsilon^2(n)\Big)}}{\epsilon(n)}\bigg).
\end{align}
Hence, by choosing $\epsilon(n)=\delta\ln^{-0.5+\epsilon}n$ for some constant $\delta$ (that we choose small enough) we get from the last inequality in (\ref{cannes5}) that for some constant $B>0$ and any constant $k$,
\begin{align}\label{cannes7}S_1=\mathcal{O}\bigg(ne^{-B\ln^{2\epsilon}n}\bigg)=\mathcal O \Big(\frac {n}{\ln^{k+1}{n}}\Big).
\end{align}

We argue similarly for the vertices $v$, $d(v)\leq \mu^{-1}\ln{n}-\ln^{0.5+\epsilon}{n}$.
In (\ref{cannes2}) let $t=\frac{1-\epsilon(n)}{2}$  where $\epsilon(n)=\delta\ln^{-0.5+\epsilon}n$ for some constant $\delta$ as above.
In analogy with (\ref{cannes9}) an upper bound for the expected number of vertices $v$ with $d(v)\leq \mu^{-1}\ln{n}-\ln^{0.5+\epsilon}{n}$ is
 \begin{align}\label{cannes10}S_2:=\sum_{d=0}^{\lfloor\mu^{-1}\ln{n}-\ln^{0.5+\epsilon}{n}\rfloor}b^d\mathbf{E}\big(n_v^{\frac{1-\epsilon(n)}{2}}(n_v-1)^{\frac{1-\epsilon(n)}{2}}\big).
\end{align}

We use similar calculations as in (\ref{cannes3})--(\ref{cannes7}) to show that
\begin{align}\label{cannes6}\sum_{d=0}^{\lfloor\mu^{-1}\ln{n}-\ln^{0.5+\epsilon}{n}\rfloor}b^d\big(\mathbf{E}(W_j^{1-\epsilon(n)})\big)^dn^{1-\epsilon(n)}=\mathcal{O}\bigg(ne^{-B\ln^{2\epsilon}n}\bigg).
\end{align}
This implies in analogy with (\ref{cannes9})--(\ref{cannes3}) that for some constant $B$ and any constant $k$,
\begin{align}\label{cannes11}S_2=\mathcal{O}\big(ne^{-B\ln^{2\epsilon}n}\big)=\mathcal O\Big(\frac {n}{\ln^{k+1}{n}}\Big).
\end{align}
Hence, if $s_1=0$ the number of bad vertices is bounded by $\mathcal O_{L^{1}} \Big(\frac {n}{\ln^{k+1}{n}}\Big)$, for any constant $k$. 
Thus, it follows from our previous explanation that the number of bad vertices for arbitrary $s_1\geq 0$ is bounded by $\mathcal O_{L^{1}} \Big(\frac {n}{\ln^{k}{n}}\Big)$.
\end{proof}

\begin{rem}
 We note from $(\ref{cannes5})$, $(\ref{cannes7})$ and $(\ref{cannes11})$ that we in fact get a sharper bound for the number of bad nodes, i.e., $\mathcal{O}\big(ne^{-B'\ln^{2\epsilon}n}\big)$ for some constant $B'>0$.
\end{rem}

\begin{rem}\label{rem1} From the calculations in the proof of Theorem \ref{lemma1} in particular in
$(\ref{cannes5})$, we see that we can get a much smaller error term for larger depths, i.e., for any constant $r$ there is a constant $C>0$ so that the number of nodes with $d(v)\geq C \ln n$  is bounded by $\mathcal{O}_{L^{1}}\big(\frac{1}{n^r}\big)$.
\end{rem}

\subsection{Proof of Theorem \ref{lemma}}\label{3}

\begin{proof}
We write 
\begin{align}\label{Z_n}Z_n:=\frac{D_n-\mu^{-1} \ln n}{\sqrt{\ln n}}.
\end{align}
By a classical result in probability theory, see e.g. \cite[Theorem 5.5.4]{gut}, the limit law in (\ref{splitdepth:2}) implies that (\ref{expsplitdepth2}) holds if 
$Z_n$ is uniformly integrable.
In particular this is true if $Z_n^2$ is uniformly integrable. This uniformly integrability also gives
\begin{align}\label{varsplitdepth3}\mathbf{E}\big(Z_n^2\big):=\frac{\mathbf{E}\Big(\big(D_n-\mu^{-1} \ln n\big)^2\Big)}{\ln n}\rightarrow\mathbf{E}\Big(N\big(0,\sigma ^2\mu^{-3}\big)^2\Big)=\sigma ^2\mu^{-3}.
\end{align}
Furthermore, the convergence results in (\ref{expsplitdepth2}) and (\ref{varsplitdepth3}) imply (\ref{varsplitdepth2}) for $k=n$.
By using the same coupling argument as in (\ref{expsplitdepth1}) it is easy to show that the convergence result of the expected depth in (\ref{expsplitdepth2}) implies the convergence result of the expected average depth in (\ref{expsplitdepth3}).

Thus, it remains to show that $Z_n^2$ is uniformly integrable and that (\ref{varsplitdepth2}) for $k=n$ implies that (\ref{varsplitdepth2}) also holds for $\frac{n}{\ln n}\leq k <  n$.
 By a standard argument, see e.g.\ \cite[Theorem 5.5.4]{gut}, $Z_n^2$ is uniformly integrable if for some $p>1$ and $n_0$ large enough,
\begin{align}\label{hoj}\sup_{n>n_0}\mathbf{E}\big(|Z_n^{2}|^p\big):&=\sup_{n>n_0}\mathbf{E} \bigg(\bigg{|}\frac{(D_n-\mu^{-1} \ln n\big)^2}{\ln n}\bigg{|}^{p}\bigg),
 \end{align}
  is uniformly bounded. We choose $p=\frac{3}{2}$.
We show that this is true by using similar calculations as Devroye used in \cite{devroye3} for proving
the limit law of $D_n$ in (\ref{splitdepth:2}).
First, consider an infinite random path $u_1,u_2,\dots, $ in the skeleton tree $S_b$, where $u_1$ is the root.
Given $u_1$ and the split vector $\mathcal{V}_{u_i}=(V_1,\dots,V_b)$ for $u_i$, then $u_{i+1}$ is the $j$-th child of $i$ 
with probability $V_j$. Construct a random split tree with $n$ balls and let $u*$ be the unique leaf in the infinite path. 
Then by using a natural coupling, letting the $n$:th ball follow the random path, $D_n$ is in stochastic sense less than or equal to the distance between $u*$ and the root. In the coupling $D_n$ is less than this distance, if the $n$-th ball is sent to a leaf which splits and does not send this ball to one of its children (i.e, the $n$-th ball is one of the $s_0$ balls). If the $n$-th ball is one of the $s_1$ balls it is added to a child of $p(u*)$ (the parent of $u*$), i.e., it ends up at the same depth as $u*$.  Recall that $H_n$ denotes the height of a split tree with $n$ balls.
For all $\beta>0$ we have
\begin{align}\label{larger event}
 \mathbf{P}\Big(D_n>k+\beta\Big) \leq \mathbf{P}\Big(n(u_k)>\beta\Big)+\mathbf{P}\Big(H_{\beta}>\beta\Big),
\end{align}
and 
\begin{align}\label{larger event2}
\mathbf{P}\Big(D_n<k\Big)\leq \mathbf{P}\Big(n(u_k)\leq s+1\Big).
\end{align}
Recall that $\Delta=V_S$, where given $(V_1,\dots,V_b)$, $S=j$ with probability $V_j$. 
Then $n(u_k)$ is stochastically bounded by 
\begin{align}\label{binomialsigma} n(u_k) &\leq \mathrm{mBin}(n,\prod_{j=1}^{k}\Delta_j)+\mathrm{mBin}(s_1,\prod_{j=2}^{k}\Delta_j)\nonumber\\&+\mathrm{mBin}(s_1,\prod_{j=3}^{k}\Delta_j)+\dots + \mathrm{mBin}(s_1,\Delta_k)+s_1,
\end{align}
where $\Delta_j$ are i.i.d random variables distributed as $\Delta$.

Consider the probability 
$\mathbf{P}(D_n>k+\beta)$, where $k=\lfloor \mu^{-1} \ln n+\frac{x}{2}\sqrt{\ln n} \rfloor$ for $x \in R^{+}$. We bound this by bounding the probabilities in the right hand-side of (\ref{larger event}), choosing  $\beta=\lfloor\frac{x}{2}\ln^{0.2}(n)\rfloor$. First note that the bound of $n(u_k)$ in (\ref{binomialsigma}) implies that in stochastic sense
\begin{align}\label{binomialsigma2}n(u_k) &\leq \mathrm{mBin}\big(n,\prod_{j=1}^{k}\Delta_j\big)+
\mathrm{mBin}\big(s_1,\prod_{j=1}^{k}\Delta_j\big) +\dots \nonumber\\&~~~~+\mathrm{mBin}\big(s_1,\prod_{j=k-\lfloor\frac{x}{2}\ln^{0.1}n\rfloor+1}^{k}\Delta_j\big)+\lfloor\frac{s_1x}{2}\ln ^{0.1}n\rfloor.
\end{align}
Thus, we can bound the first probability in the right hand-side of  (\ref{larger event}) by
\begin{align}\label{binomialsigma3} &
 \mathbf{P}(n(u_k)>\beta)\leq  \mathbf{P}\bigg(\mathrm{mBin}\big(n,\prod_{j=1}^{k}\Delta_j\big)+\lfloor\frac{s_1x}{2}\ln ^{0.1}n\rfloor \geq\beta-1\bigg)\nonumber\\&+\mathbf{P}\bigg(\mathrm{mBin}\big(s_1,\prod_{j=1}^{k}\Delta_j\big)+\dots +\mathrm{mBin}\big(s_1,\prod_{j=k-\lfloor\frac{x}{2}\ln ^{0.1}n\rfloor}^{k}\Delta_j\big) >1\bigg).
\end{align}
For bounding the first probability in the right hand-side of the inequality in (\ref{binomialsigma3}), we use \cite[Lemma 4]{devroye3} which states a general result for bounding tail probabilities for mixed binomial $(m,Z)$ distributions where $Z$ is a random variable, thus we obtain 
\begin{align}\label{limitlaw}&
 P_1:=\mathbf{P}\bigg(\mathrm{mBin}\big(n,\prod_{j=1}^{k}\Delta_j\big)>\beta-\lfloor\frac{s_1x}{2}\ln ^{0.1}n\rfloor-1\bigg)\nonumber\\&\leq 
\mathbf{P}\bigg(\sum_{j=1}^{k}\ln \Delta_j>\ln\Big(\frac{\beta-\lfloor\frac{s_1x}{2}\ln ^{0.1}n\rfloor-1}{2n}\Big)\bigg)+\Big(\frac{e}{4}\Big)^\frac{\beta-\lfloor\frac{s_1x}{2}\ln ^{0.1}n\rfloor-1}{2}.
\end{align}
From (\ref{limitlaw}) we deduce that for $n$ large enough
\begin{align}
\label{limitlaw1} P_1&\leq \mathbf{P}\bigg(\dfrac{\sum_{j=1}^{k}\ln \Delta_j+k\mu}{\sqrt{k\sigma^2}}>
\dfrac{\ln\Big(\frac{\beta-\lfloor\frac{s_1x}{2}\ln ^{0.1}n\rfloor-1}{2n}\Big)+k\mu}{\sqrt{k\sigma^2}}\bigg)+\Big(\frac{e}{4}\Big)^\frac{\beta-\lfloor\frac{s_1x}{2}\ln ^{0.1}n\rfloor-1}{2}
\nonumber\\&~~~\leq \mathbf{P}\bigg(\dfrac{\sum_{j=1}^{k}\ln \Delta_j+k\mu}{\sqrt{k\sigma^2}}>
\dfrac{x \mu^{\frac{3}{2}}}{3\sigma}\bigg)+\Big(\frac{e}{4}\Big)^\frac{\beta-\lfloor\frac{s_1x}{2}\ln ^{0.1}n\rfloor-1}{2}.
\end{align}
Recall the notations $c:=\mathbf{E}\big(\Delta\big)$, $\mu=:\mathbf{E}\big(-\ln \Delta\big)$ and 
$\sigma ^2 =:\mathbf{Var}\big(\ln \Delta\big)$. Note that $c<1$. Since the $\Delta_j$, $j\in\{1,\dots,k\}$, are i.i.d random variables we can use the Marcinkiewicz-Zygmund inequalities, see e.g.\ \cite[Corollary 3.8.2]{gut}, which gives for $q\geq 2$,
\begin{align}\label{normal}
 \mathbf{E}\bigg(\bigg{|}\sum_{j=1}^{k}\ln \Delta_j+k\mu \bigg{|}^q\bigg)\leq B_qk^{\frac{q}{2}}\mathbf{E}\bigg(\Big{|}\ln \Delta_j+\mu \Big{|}^q\bigg),
\end{align}
where $B_q$ is a constant only depending on $q$.
By using the Markov inequality and (\ref{normal}) we get from (\ref{limitlaw1}) that for $n$ large enough
\begin{align}\label{limitlaw2}&P_1\leq\dfrac{\mathbf{E}\bigg(\bigg(\dfrac{\sum_{j=1}^{k}\ln \Delta_j+k\mu}{\sqrt{k\sigma^2}}\bigg)^4\bigg)}{\Bigg(\dfrac{x \mu^{\frac{3}{2}}}{3\sigma}\Bigg)^4}+\Big(\frac{e}{4}\Big)^\frac{\beta-\lfloor\frac{s_1x}{2}\ln ^{0.1}n\rfloor-1}{2}\nonumber\\& \leq 
\dfrac{B_4\mathbf{E}\bigg(\Big{|}\ln \Delta_j+\mu \Big{|}^4\bigg)}{\Bigg(\dfrac{x \mu^{\frac{3}{2}}}{3}\Bigg)^4}+\Big(\frac{e}{4}\Big)^\frac{\beta-\lfloor\frac{s_1x}{2}\ln ^{0.1}n\rfloor-1}{2}=\frac{C}{x^4}+\Big(\frac{e}{4}\Big)^\frac{\beta-\lfloor\frac{s_1x}{2}\ln ^{0.1}n\rfloor-1}{2},
\end{align}
for the constant 
$C=\frac{B_4\mathbf{E}(|\ln \Delta_j+\mu |^4)3^4}{\mu^{6}}<\infty$ (recall from section \ref{not} that 
all moments of $|\ln \Delta|$ are bounded).
The Markov inequality  implies that
\begin{align}\label{binomialsigma4}
 &\mathbf{P}\bigg(\mathrm{mBin}\big(s_1,\prod_{j=1}^{k}\Delta_j\big)+\dots+ \mathrm{mBin}\big(s_1,\prod_{j=k-\lfloor\frac{x}{2}\ln ^{0.1}n\rfloor+1}^{k}\Delta_j\big)  \geq 1\bigg)\nonumber\\&\leq \mathbf{E}\bigg(\mathrm{mBin}\big(s_1,\prod_{j=1}^{k}\Delta_j\big)+\dots+ \mathrm{mBin}\big(s_1,\prod_{j=k-\lfloor\frac{x}{2}\ln ^{0.1}n\rfloor+1}^{k}\Delta_j\big)\bigg)\nonumber\\&=\mathcal{O}\Big(c^{\lfloor \frac{x}{2}\ln ^{0.1}n\rfloor}\Big),~~~~\mathrm{for}~~c:=\mathbf{E}(\Delta)<1.
\end{align}

We now consider the other probability i.e., $\mathbf{P}\Big(H_{\beta}>\beta\Big)$. (Note that this probability is 0 if $s_0>0$ or $s_1>0$.)
By applying (\ref{upperboundprofile}) we get
\begin{align}\label{mittag1} \mathbf{P}\Big(H_{\beta}>\beta\Big)\leq b^{\beta}\mathbf{P}\Big(n(v)\geq 2\Big),
\end{align} where $v$ is a vertex at depth $\beta-1$.
From (\ref{cannes}) we deduce for $t=0.75$,
\begin{align}\label{cannes,1}\mathbf{P}(n_v\geq 2)\leq  {\mathbf{E}(n_v^{0.75}(n_v-1)^{0.75})}.
\end{align}
Let $X_{\beta}$ be a mixed binomial $(n,\prod_{j=1}^{\beta}W_j)$, where $W_j,~j\in\{1,\dots,\beta\}$ are i.i.d.\ random variables distributed as $V$.
Note similarly as in (\ref{cannes8}) that (\ref{cannes,1}) is bounded by the expectation of
\begin{align}\label{cannes8,1} \mathbf{E}\Big(X_{\beta}^{0.75}(X_{\beta}-1)^{0.75} \big{|} \mathscr{G}_{\beta}\Big). 
\end{align}
We note similarly as in (\ref{binomial4}) that the Lyapounov inequality gives 
\begin{align}\label{binomial4,1}\mathbf{E}\Big(X_{\beta}^{0.75}(X_{\beta}-1)^{0.75}\big{|} \mathscr{G}_{\beta}\Big)
&\leq (\beta^2-\beta)^{0.75}\prod_{j=1}^{\beta}W_j^{1.5}\leq \Big(\beta\prod_{j=1}^{\beta}W_j\Big)^{1.5}.
\end{align}
Again the fact that $\mathbf{E}(W_j^2)<\mathbf{E}(W_j)=\frac{1}{b}$ (since $W_j\in (0,1)$),
gives that there is a $\delta>0$ such that
\begin{align}\label{height} \mathbf{P}\bigg(H_{\beta}>\beta\bigg)\leq b^{-\delta\beta}\beta^{1.5}.
\end{align}
We now consider the probability 
$\mathbf{P}(D_n<k)$, where $k=\lfloor \mu^{-1} \ln n-x\sqrt{\ln n} \rfloor$ for $x \in R^{+}$, and use the bound of the larger probability in (\ref{larger event2}).
We have 
\begin{align}\label{limitlaw3} \mathbf{P}(n(u_k)\leq s+1)\leq  \mathbf{P}\bigg(-ks+\mathrm{Bin}(n,\prod_{j=1}^{k}\Delta_j)\leq s+1\bigg).
 \end{align}
Again by applying \cite[Lemma 4]{devroye3} and using similar calculations as in (\ref{limitlaw})--(\ref{limitlaw2}), we get for $n$ large enough \begin{align}\label{limitlaw4}P_2&\leq   
\mathbf{P}\bigg(\dfrac{\sum_{j=1}^{k}\ln \Delta_j+k\mu}{\sqrt{k\sigma^2}}<
\dfrac{\ln\Big(\frac{2(s(k+1)+1)}{n}\Big)+k\mu}{\sqrt{k\sigma^2}}\bigg)+\Big(\frac{2}{e}\Big)^{s(k+1)+1}
\nonumber\\\leq &\mathbf{P}\bigg(\dfrac{\sum_{j=1}^{k}\ln \Delta_j+k\mu}{\sqrt{k\sigma^2}}<
\dfrac{x \mu^{\frac{3}{2}}}{3\sigma}\bigg)+\Big(\frac{2}{e}\Big)^{s(k+1)+1}
\nonumber\\&\leq
\dfrac{B_4\mathbf{E}\bigg(\Big{|}\ln \Delta_j+\mu \Big{|}^4\bigg)}{\Bigg(\dfrac{x \mu^{\frac{3}{2}}}{3}\Bigg)^4}+\Big(\frac{2}{e}\Big)^{s(k+1)+1}=C\frac{1}{x^4}+\Big(\frac{2}{e}\Big)^{s(k+1)+1},
\end{align}
for the constant 
$C=\frac{B_4\mathbf{E}(|\ln \Delta_j+\mu |^4)3^4}{\mu^{6}}<\infty$. 
Now we can show that for $n_0$ large enough $\sup_{n>n_0}\mathbf{E}\big(|Z_n^{2}|^{\frac{3}{2}}\big)$ in (\ref{hoj}) is uniformly bounded:
By the choice of $k$ and $\beta$, we get from (\ref{limitlaw2}), (\ref{binomialsigma4}), (\ref{height}) and (\ref{limitlaw4}) that for for $n_0$ large enough
\begin{align}\label{uni}\sup_{n>n_0}\mathbf{E}\big(|Z_n^{2}|^{\frac{3}{2}}\big):&=\sup_{n>n_0}\mathbf{E} \bigg(\bigg{|}\frac{(D_n-\mu^{-1} \ln n\big)^2}{\ln n}\bigg{|}^{\frac{3}{2}}\bigg)
\nonumber\\&=\sup_{n>n_0}\int_{x=0}^{\infty} 3x^2\mathbf{P} \bigg(\bigg{|}\frac{D_n-\mu^{-1} \ln n}{\sqrt{\ln n}}\bigg{|}>x\bigg)dx \nonumber\\&\leq \sup_{n>n_0} \bigg{\{}\int_{x=1}^{\infty}\bigg(\frac{6C}{x^2}+3x^2\Big(\frac{e}{4}\Big)^\frac{\beta-\lfloor\frac{s_1x}{2}\ln ^{0.1}n\rfloor-1}{2}+3x^2\Big(\frac{2}{e}\Big)^{s(k+1)+1}\nonumber\\&+\mathcal{O}\Big(x^2c^{\lfloor \frac{x}{2}\ln ^{0.1}n\rfloor}\Big)+ 3 x^2b^{-\delta\beta}\beta^{1.5}
\bigg)dx\bigg{\}}+1<\infty,\end{align}
 and thus $Z_n^2$ is uniformly integrable so that (\ref{varsplitdepth3}) holds, which shows (\ref{varsplitdepth2}) for $k=n$.

From this result it is now easy to show as we explain below that (\ref{varsplitdepth2}) also holds for all $k$, $\frac{n}{\ln n}\leq k <  n$. 
Recall that we denote the depth of ball $k$, when it is added to the tree by $D_{k}^{f}$. 
As we argued for proving (\ref{expsplitdepth1}),  in stochastic sense  for $k\leq n$,
\begin{align}\label{begr}
D_{k}^{f}\leq D_k\leq D_n.
\end{align}
From (\ref{splitdepth:2}) it follows that for all $\frac{n}{\ln n}\leq k\leq n$,
\begin{align*} \frac{D_{k}^{f}-\mu^{-1} \ln n}{\sqrt{\sigma ^2\mu^{-3}\ln n}}\stackrel{d}\rightarrow N(0,1).
\end{align*}
By using this and (\ref{begr}), for $\frac{n}{\ln n}\leq k\leq n$,
\begin{align*} \frac{D_{k}-\mu^{-1} \ln n}{\sqrt{\sigma ^2\mu^{-3}\ln n}}\stackrel{d}\rightarrow N(0,1).
\end{align*}
We need to show that for $\frac{n}{\ln n}\leq k\leq n$,
\begin{align}\label{axel}\frac{\mathbf{E}\Big(\big(D_k-\mu^{-1} \ln n\big)^2\Big)}{\ln n}\rightarrow\mathbf{E}\Big(N\big(0,\sigma ^2\mu^{-3}\big)^2\Big).
\end{align}
As for $D_n$ this follows if for $n_0$ large enough,
\begin{align}\label{uni2}&\sup_{n>n_0}\mathbf{E} \bigg(\bigg{|}\frac{\big(D_k-\mu^{-1} \ln n\big)^2}{\ln n}\bigg{|}^\frac{3}{2}\bigg)
<\infty.
\end{align}
We have for $k\leq n$,
\begin{align*} \mathbf{P} \bigg(\frac{D_{k}^{f}-\mu^{-1} \ln n}{\sqrt{\ln n}}\geq x\bigg)&\leq \mathbf{P} \bigg(\frac{D_k-\mu^{-1} \ln n}{\sqrt{\ln n}}\geq x\bigg)\\&\leq\mathbf{P} \bigg(\frac{D_n-\mu^{-1} \ln n}{\sqrt{\ln n}}\geq x\bigg),
 \end{align*}
and
\begin{align*} \mathbf{P} \bigg(\frac{D_n-\mu^{-1} \ln n}{\sqrt{\ln n}}<x\bigg)&\leq \mathbf{P} \bigg(\frac{D_k-\mu^{-1} \ln n}{\sqrt{\ln n}}<x\bigg)\\&\leq\mathbf{P} \bigg(\frac{D_{k}^{f}-\mu^{-1} \ln n}{\sqrt{\ln n}}<x\bigg).
 \end{align*}
Thus, (\ref{uni2}) follows from the calculations in (\ref{uni}).
This shows that (\ref{varsplitdepth2}) holds for all $k$, $\frac{n}{\ln n}\leq k <  n$, follows from the fact that (\ref{varsplitdepth2}) holds for $k = n$. 
\end{proof}

We now prove the two corollaries of Theorem \ref{lemma}.
\begin{proof}[Proof of Corollary \ref{cor1}]
We show (\ref{branoder2}) and from this it is obvious that (\ref{branoder22}) also holds since Theorem \ref{lemma1} implies that the bad vertices are few enough so that we could equally  sum over all vertices.

First note that (\ref{axel}) gives that for the balls  $\lfloor\frac{n}{\ln n}\rfloor\leq k\leq n$,
\begin{align}\label{branoder}
 \mathbf{E}(D_k-\mu^{-1}\ln{n})^2=  \mu^{-3}\sigma^2\ln n+o\big(\ln n\big).
\end{align} 

Recall that a vertex $v$ in a split tree $T$ is called good if 
\begin{align*}\mu^{-1}\ln{n}-\ln^{0.5+\epsilon}{n}\leq d(v)\leq \mu^{-1}\ln{n}+\ln^{0.5+\epsilon}{n},
\end{align*}
and that we write $\mathsf{V}^{\ast}\big(T^{n}\big)$ for the set of good vertices in $T^{n}$ and $N^{*}=|\mathsf{V}^{\ast}\big(T^{n}\big)|$ for the number of good vertices.
Note that (\ref{vertexball}) and Theorem \ref{lemma1} implies that
\begin{align}\label{sushi}
 \mathbf{E}(N^{\ast})=  \alpha n+o\big(n\big).
\end{align}

We will now consider subtrees defined similarly  as the $T_{r,B}$, $r\in R$, subtrees we used in the proof of Theorem \ref{assumption1}. However, instead of using the product $M_v^N$ for defining the stopping time in each branch we use the real subtree size $n_v$: Let $U$ be the set of vertices such that $u\in U$, if and only if $n_u\leq \ln^{0.4} n$ and $n_{p(u)}>\ln^{0.4} n$ (where $p(u)$ is the parent of $u$), and consider  all subtrees $T_u^{\ln ^{0.4}n},~u\in U$, rooted at  $u$.

It is an immediate consequence of Lemma \ref{lem11} that the first and second moment of the height of a subtree with $\ln^{0.4} n$ balls is bounded by $\mathcal{O}\Big(\ln ^{0.4}n\Big)$ and $\mathcal{O}\Big(\ln ^{0.8}n\Big) $, respectively. However, there are much stronger bounds, e.g., \cite{devroye3} since split trees are of logarithmic order.
Hence, since the subtrees are small by applying that $d(v)-\mu^{-1}\ln{n}=(d(u)-\mu^{-1}\ln{n})+(d(v)-d(u))$ and summing over all good vertices  we get
\begin{align}\label{branoder3}&
\mathbf{E} \Big(\sum_{u}\sum_{v\in \mathsf{V}^{\ast}\big(T_u^{\ln ^{0.4}n}\big)}(d(v)-\mu^{-1}\ln{n})^2\Big)\nonumber\\&=\mathbf{E}\Big(\sum_{u}\sum_{v\in \mathsf{V}^{\ast}\big(T_u^{\ln ^{0.4}n}\big)}(d(u)-\mu^{-1}\ln{n})^2\Big)+o(n\ln n).
\end{align}
In (\ref{branoder3}) we use the bound for the good vertices, but it is obvious from Theorem \ref{lemma1} that the bad vertices are few enough so that one could equally sum over all vertices.
The number of subtrees that hold the balls $k<\lfloor\frac{n}{\ln n}\rfloor$ is trivially bounded by $\lfloor\frac{n}{\ln n}\rfloor$. Thus, the number of nodes in these subtrees is bounded by 
$\mathcal{O}_{L^1}\Big(\frac{n}{\ln^{0.6}n}\Big)$. Let ${N}_u^{\ast}=|\mathsf{V}^{\ast}\big(T_u^{\ln ^{0.4}n}\big)|$ be the number of good vertices in $T_u^{\ln ^{0.4}n}$.
Hence, by applying that the subtrees $T_u^{\ln ^{0.4}n},~u\in U$, are small so that $(d(v)-\mu^{-1}\ln{n})^2$ do not differ more than $\mathcal{O}\Big(\ln ^{0.8}n\Big)$ for different vertices $v\in T_u^{\ln ^{0.4}n}$, together with (\ref{sushi}) we get
\begin{align}\label{branoder4}&\mathbf{E}\Big(\sum_{u}\sum_{v\in \mathsf{V}^{\ast}\big(T_u^{\ln ^{0.4}n}\big)}(d(u)-\mu^{-1}\ln{n})^2|n_u\Big)\nonumber\\&=\sum_{u}\mathbf{E}\Big(\sum_{v\in \mathsf{V}^{\ast}\big(T_u^{\ln ^{0.4}n}\big)}\sum_{k\in T_u}\frac{(D_k-\mu^{-1}\ln{n})^2}{n_u}|n_u\Big)+o_{L^1}\big(n \ln n\big)
\nonumber\\&=\sum_{u}\sum_{k\in T_u}\frac{\mathbf{E}\Big((D_k-\mu^{-1}\ln{n})^2|n_u\Big)}{n_u}(\alpha n_u+o(n_u))+
o_{L^1}\big(n \ln n\big)\end{align}

Recall from (\ref{haj1}) in Lemma \ref{lem16} that
\begin{align*}\mathbf{E}(N)=\mathbf{E}\Big(\sum_{r\in R} N_{r}I\{|n_{r}-M_{r}^{n}|\leq B^{0.6}\}\Big)+\mathcal{O}\Big(\frac{n}{B^{0.1}}\Big),\end{align*} where 
$M_{r}^{n}$ by definition is less than $B$. 
If we choose $B:=\ln ^{0.3}n$ this means that for the expected value in the right hand-side we can assume that 
$n_{r}\leq \ln ^{0.4}n$. Hence, the expected number of (good) vertices in $T^{n}$ that are not in the subtrees $T_u^{\ln ^{0.4}n},~u\in U$, is bounded by $\mathcal{O}\Big(\frac{n}{B^{0.1}}\Big)$ for $B=\ln ^{0.3}n$.
Hence, this bound  implies that the expected  value of the last equality in (\ref{branoder4}) is equal to 
\begin{align*}\sum_{k\in T^n}\mathbf{E}\Big((D_k-\mu^{-1}\ln{n})^2\Big)(\alpha+o(1))+o\big(n \ln n\big)\nonumber\\=
 \Big(\mu^{-3}\sigma^2\ln n+o\big(\ln n\big)\Big)(\alpha n+o(n))+o\big(n \ln n\big).
\end{align*}
\end{proof}

\begin{proof}[Proof of Corollary \ref{cor2}]
 As in Corollary \ref{cor1} we only show the result for the good vertices, i.e., (\ref{indsum3}). From the proof it is obvious that also (\ref{indsum33})  holds by applying Theorem \ref{lemma1}, showing that the number of bad vertices is covered by the error term.
We observe the obvious fact that the sum of those $n_i,~i\in\{1,\dots, b^L\}$,  
which are less than $\frac{n}{b^{kL}}$ for large enough $k$, is bounded by
\begin{align}\label{boundy}b^L \cdot\frac{n}{b^{kL}}=\mathcal{O}\Big(\frac{n}{\ln^4 n}\Big).
\end{align}
(Note that by choosing $k$ large enough in (\ref{boundy}), 
the power of the logarithm can be arbitrarily large.)

Recall that $\mathsf{V}^{\ast}\big(T_i\big)$ is the set of good vertices in $T_i$
and that $\Omega_L$  is the $\sigma$-field generated by $\{n_v,~d(v)\leq L\}$. Let \begin{align*}Z_i:=\sum_{v \in \mathsf{V}^{\ast}\big(T_i\big)}\frac {{(d_i(v)-\mu^{-1}\ln{n_i})^2}}{ \mu^{-3}\ln^3{n_i}}.\end{align*} 
Thus, from (\ref{branoder2}) it follows that 
\begin{displaymath}\sum_{i=1}^{b^L}\mathbf{E} \bigg(Z_i\Big|\Omega_L \bigg)=\sum_{i=1}^{b^L}\frac {\sigma^2 \alpha n_i}{\ln^2{n_i}}
+\sum_{i=1}^{b^L}\frac {o(n_i)}{\ln^{2}{n_i}}.\end{displaymath}

Let $k>0$ be a fixed constant and assume that $n_i$ is at least $\frac{n}{b^{kL}}$; by Taylor expansion we  get
\begin{align}\label{hejsan}\frac{1}{\ln^2 n_i}=\frac{1}{\ln^2 n}+\mathcal{O}\Big(\frac{\ln\ln n}{\ln^3 n}\Big).
\end{align}
By applying (\ref{boundy}) for $k$ large enough to cover those $n_i$ that are less than $\frac{n}{b^{kL}}$ in an error term $o\big(\frac{n}{\ln^2 n}\big)$, and using (\ref{hejsan}) we deduce
\begin{align}\label{hals}\sum_{i=1}^{b^L}\frac {o(n_i)}{\ln^{2}{n_i}}=o\left(\sum_{i=1}^{b^L}\frac {n_i}{\ln^{2}{n_i}}\right)+o\big(\frac{n}{\ln^2 n}\big)=o\big(\frac{n}{\ln^2 n}\big).
\end{align}

 Hence, since we can assume that $n_i$ is at least $\frac{n}{b^{kL}}$ for large enough $k$, by Taylor expansion
\begin{align}\label{pontus1}\sum_{i=1}^{b^L}\frac {\sigma^2\alpha n_i}{\ln^2{n_i}}
=\frac{\sigma^2\alpha n}{\ln^2{n}}+\mathcal O\Big(\sum_{i=1}^{b^L}\frac {n_i\ln\ln{n}}{\ln^3{n}}\Big)=
\frac{\sigma^2\alpha n}{\ln^2{n}}+o\big(\frac{n}{\ln^2 n}\big).\end{align}

Since only the good vertices are considered, and the random variables  conditioned on  $\Omega_L $
 are independent for $i\in\{1,\dots,b^L\}$,
\begin{align}\label{indsum}&\mathbf{Var}   \Big(\sum_{i=1}^{b^L}Z_i
\Big|\Omega_L \Big)=  \sum_{i=1}^{b^L}\mathbf{Var} \Big(Z_i  
\Big|\Omega_L \Big)
\leq \mu^{3}\sum_{i=1}^{b^L}\mathbf{Var} \Big(\sum_{v \in \mathsf{V}^{\ast}\big(T_i\big)} \frac{1}{\ln^{2-2\epsilon}{n_i}}\Big|\Omega_L \Big).
\end{align} 
Thus, the well-known Minkowski's inequality and the fact that $\mathbf{E}(N^2)=\mathcal{O}(n^2)$ imply
\begin{align}\label{indsum2}&\mathbf{Var}   \Big(\sum_{i=1}^{b^L}Z_i
\Big|\Omega_L \Big)=\mathcal O\left(\sum_{i=1}^{b^L}\frac {n_i^2}{\ln^{4-4\epsilon}{n_i}}\right).\end{align} 
Similarly as in (\ref{subtreei}) for $\beta$ large enough,
\begin{align}\label{sum4}&\sum_{i=1}^{b^L}\mathbf{E}(n_i^2)=o(\frac{n^2}{\ln ^{4} n}), \end{align}
Applying (\ref{sum4}), Chebyshev's inequality gives (\ref{indsum3}). 

\end{proof}

\section {Results on the Total Path Lengths}
We complete this study with some results and a conjecture of the ``total path length'' random variables. Recall from
Section \ref{not} the definitions of the two types of total path length $\Psi(T)$ and $\Upsilon(T)$, i.e., the sum of the depths of balls and the sum of the depths of nodes, respectively.

From (\ref{expsplitdepth3})  we have
 \begin{align}\label{exptotal1}
 \mathbf{E}\Big(\Psi{(T^{n})}\Big)=\mu^{-1} n\ln n+nq(n),
\end{align}
where
$q(n)=o(\ln^{0.5} n)$ is a function that depends on the type of split tree.

Similarly, by using(\ref{vertexball}) in Theorem \ref{assumption1} and the profile result in Theorem \ref{lemma1} including Remark \ref{rem1} (which gives a smaller bound of the expected number of vertices with depths much bigger than the depths of the good vertices), we get 
 \begin{align}\label{exptotal2}
 \mathbf{E}\Big(\Upsilon{(T^{n})}\Big)=\mu^{-1}\alpha n\ln n+nr(n),
\end{align}
where $\alpha$ is the constant that occurs in (\ref{vertexball}) and 
$r(n)=o(\ln n)$ is a function that depends on the type of split tree. 

\begin{ass2}\label{christmas2}
Assume that the functions $q(n)$ in (\ref{exptotal1}) converges to some constant $\varsigma$.
\end{ass2} 
In \cite{Neininger} there is an analogous assumption.
Examples of split trees where it is shown that $q(n)$ converges to a constant  are binary search trees (e.g.\ \cite{jan3}), random $m$-ary search trees \cite{Mahmoud}, quad trees \cite{Neininger} and the random median of a $(2k+1)$-tree \cite{Roesler}, tries and Patricia tries \cite{bourdon}. 

\begin{ass3}\label{assumption,1} 
We assume that the result  in (\ref{vertexball}) in Theorem \ref{assumption1} can be improved to
\begin{align*}\mathbf{E}(N)=\alpha n+f(n),
\end{align*} where $f(n)=\mathcal{O}\Big(\frac{n}{\ln^{1+\epsilon} n}\Big)$.\end{ass3} 
 Stronger second order terms of the size have previously been shown to hold e.g., for $m$-ary search trees \cite{Mahmoud2}, for these $f(n)$ in assumption (A3) is $o(\sqrt{n})$ when $m\leq 26$ and is
$\mathcal{O}\Big(n^{1-\epsilon}\Big)$ when $m\geq 27$.  
 Further, as described in Section \ref{strong} tries are special cases of split trees which are not as random as other types of split trees. Flajolet and Vall\'ee (personal communication) have recently shown that also for most tries (as long as $-\ln V$ is not too close to being lattice) assumption (A3) holds.


\begin{thm}\label{christmas}
 Assume that (A1)--(A3)  hold, then also 
$r(n)$ converges to some constant $\zeta$.
\end{thm}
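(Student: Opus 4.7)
My plan is to derive a self-contained recursion for the ``excess'' $D(n):=u(n)-\alpha p(n)$, where $u(n):=\mathbf{E}(\Upsilon(T^n))$ and $p(n):=\mathbf{E}(\Psi(T^n))$, and then evaluate $D(n)$ via the renewal framework of Section~\ref{mainsection}. Conditioning on the split at the root and noting that each vertex (respectively each ball) in the $i$-th child subtree gains exactly one unit of depth when viewed inside $T^n$ gives, for $n>s$, the coupled recursions $u(n)=\sum_i\mathbf{E}[u(n^{(i)})]+\mathbf{E}(N)-1$ and $p(n)=\sum_i\mathbf{E}[p(n^{(i)})]+(n-s_0)$, where $n^{(i)}$ is the (random) cardinality of the $i$-th child subtree. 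Subtracting $\alpha$ times the second from the first and invoking (A3) in the form $\mathbf{E}(N)=\alpha n+f(n)$ yields the single recursion $D(n)=\sum_i\mathbf{E}[D(n^{(i)})]+g(n)$ with driving term $g(n):=f(n)+\alpha s_0-1$ for $n>s$ and $g(n)=0$ for $n\leq s$ (since $u$, $p$ and $\mathbf{E}(N)-1$ all vanish on single-vertex trees). Telescoping this down until every descendant subtree has $\leq s$ balls, where $D$ vanishes, collapses everything to
\[
D(n)=\mathbf{E}\Big[\sum_{v\in\mathsf{V}(T^n)}g(n_v)\Big].
\]

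The next step is to show the right-hand side equals $\gamma n+o(n)$ for a constant $\gamma$. Writing $g=f+(\alpha s_0-1)$, the constant part contributes $(\alpha s_0-1)\,\mathbf{E}[\#\{v:n_v>s\}]$; the counted quantity is the number of internal vertices, and it satisfies the same type of recursion as $N$ itself, so repeating the proof of Theorem~\ref{assumption1} with ``number of internal vertices'' in place of $N$ yields the expectation as $\alpha_I n+o(n)$ for a constant $\alpha_I$ depending only on the split tree type. For the contribution of $f$, I would first replace $n_v$ by $M_v^n=n\prod_{j=1}^{d(v)}W_{j,v}$ at the cost of an error controlled by (\ref{iliada1,1}), and then rewrite the expectation as $\sum_{k\geq 0}b^k\mathbf{E}[f(ne^{-Y_k})]=\int_0^{\ln n}f(ne^{-t})\,dU(t)$ with $U$ the renewal function~(\ref{renewal function}). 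Using $dU(t)\sim\mu^{-1}e^t\,dt$ from Lemma~\ref{lem5}, and the sharper correction from Corollary~\ref{cor3} to bound the remainder, the substitution $m=ne^{-t}$ converts the integral to $\mu^{-1}n\int_1^n f(m)/m^2\,dm$. By (A3), $|f(m)|/m^2=\mathcal{O}(1/(m\ln^{1+\epsilon}m))$ for large $m$ and is bounded on bounded intervals, so $\mathcal{I}:=\int_1^\infty f(m)/m^2\,dm$ is finite and $\mathbf{E}[\sum_v f(n_v)]=\mu^{-1}\mathcal{I}\,n+o(n)$.

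Combining these estimates gives $D(n)=\gamma n+o(n)$ with $\gamma:=\mu^{-1}\mathcal{I}+(\alpha s_0-1)\alpha_I$. Since $D(n)/n=r(n)-\alpha q(n)$ and $q(n)\to\varsigma$ by (A2), we conclude $r(n)\to\alpha\varsigma+\gamma=:\zeta$, which is the desired statement. The main obstacle is the renewal evaluation of $\mathbf{E}[\sum_v f(n_v)]$: making the passage $n_v\mapsto M_v^n$ uniform across depths requires a Lemma~\ref{lem16}-style $L^1$ estimate, and extracting the $o(n)$ remainder in the renewal integral requires the sharper information about $U(t)-\mu^{-1}e^t$ supplied by Corollary~\ref{cor3} rather than just the leading asymptotic of Lemma~\ref{lem5}. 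The slow decay $f(m)=\mathcal{O}(m/\ln^{1+\epsilon}m)$ assumed in (A3) is exactly what makes $\int f(m)/m^2\,dm$ converge, so assumption (A3) enters in an essential way precisely through this integrability.
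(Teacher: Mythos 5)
Your recursion at the root and the telescoping to $D(n)=\mathbf{E}\big[\sum_{v}g(n_v)\big]$, with $g(m)=\big(f(m)+\alpha s_0-1\big)I\{m>s\}$, are both correct, and this is genuinely a different starting point from the paper: the paper works with $\Gamma_n=\alpha\mathbf{E}(\Psi)-\mathbf{E}(\Upsilon)$ via (\ref{equi1})--(\ref{equi2}), decomposes into the $T_{r,B}$ subtrees (Lemma~\ref{lem7}), classifies them by the value of $M_r^n$ (Lemma~\ref{lem12}), and proves $\Gamma_n/n$ is Cauchy without ever identifying the limit. Your route, if completed, would yield a closed form for $\zeta$, which is strictly more information than the paper extracts.

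However, both steps of your renewal evaluation have real gaps, and you misdiagnose the first. Passing from $\mathbf{E}\big[\sum_v f(n_v)\big]$ to $\mathbf{E}\big[\sum_v f(M_v^n)\big]$ is not a Lemma~\ref{lem16}-style truncation problem. Even when $|n_v-M_v^n|$ is of its typical size, namely of order $\sqrt{M_v^n}$, the only control on $f$ available from (A3) together with Lemma~\ref{lem14} is the Lipschitz bound $|f(m)-f(m')|=\mathcal{O}(|m-m'|)$, so $|f(n_v)-f(M_v^n)|=\mathcal{O}\big(\sqrt{M_v^n}\,\big)$ per vertex; summing this over the tree via Corollary~\ref{lem6,1} gives a contribution of order $n$, the same order as the main term you want to isolate, not $o(n)$. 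The paper avoids exactly this by never inserting $M_r^n$ into $\Gamma$: it partitions $R$ into classes on which $M_r^n$ varies by at most $\gamma B$, with $\gamma B\gg B^{0.6}$, so that all $n_r$ in one class agree to within $\mathcal{O}(\gamma B)$, and the perturbation Lemma~\ref{lemmatot} then gives a relative error $\mathcal{O}(\gamma\ln B)$ which is small when $\gamma=\epsilon^2$; your substitution has no analogous scale separation because $|n_v-M_v^n|$ is comparable to the resolution in $f$ that you need. The second gap is the renewal integral itself: Lemma~\ref{lem5} controls the cumulative $U(t)$, not its density, so writing $dU(t)\sim\mu^{-1}e^t\,dt$ inside $\int_0^{\ln n}f(ne^{-t})\,dU(t)$ is not justified, and Corollary~\ref{cor3} only controls the averaged quantity $\int_0^x e^{-t}(U(t)-\mu^{-1}e^t)\,dt$; to deduce $\int f(ne^{-t})\,d(U-\mu^{-1}e^t)=o(n)$ by integration by parts one needs control on the increments $f(j)-f(j-1)$ finer than the $\mathcal{O}(1)$ supplied by Lemma~\ref{lem14}, which (A3) does not provide. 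Lemma~\ref{lem12} in the paper works precisely because its kernel is a fixed directly Riemann integrable function, so the Key Renewal Theorem applies verbatim; your kernel $f(ne^{-t})$ is $n$-dependent with mass spread over all of $[0,\ln n]$, which falls outside the scope of any off-the-shelf renewal limit theorem.
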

Let \begin{align}\label{jul} \Gamma_n:=\alpha nq(n)-nr(n),
\end{align}
and note that 
\begin{align}\label{snow}
 \alpha\mathbf{E}\Big(\Psi{(T^{n})}\Big)-\mathbf{E}\Big(\Upsilon{(T^{n})}\Big)=\Gamma_n.
\end{align}
For proving  Theorem \ref{christmas} we will show that $\frac{\Gamma_n}{n}$ converges to a constant. 
We write $\sumx_{v}$ for a sum where we sum over all vertices $v \in T^{n}$ except the root i.e., $v\neq \sigma$.
First we recall that the total pathlength for the balls is equivalent to the sum of all subtree sizes (except for the the whole tree) for the balls i.e.,
\begin{align}\label{equi1}
 \Psi{(T^{n}})=\sumx_{v} n_v,
\end{align}
where $\sigma$ is the root of $T^n$. Similarly we recall that  the total pathlength for the nodes is equivalent to the sum of all subtree sizes (except for the the whole tree) for the nodes i.e.,
\begin{align}\label{equi2}
 \Upsilon{(T^{n}})=\sumx_{v} N_v,
\end{align}
where $\sigma$ is the root of $T^n$.
Hence, by assuming (A3) we get from (\ref{snow}) that
  \begin{align}\label{snowy}
\Gamma_n&=\alpha\mathbf{E}\Big(\sumx_{v} n_v\Big)-\mathbf{E}\Big(\sumx_{v} \Big(\alpha n_v+\mathcal{O}\Big(\frac{n_v}{\ln^{1+\epsilon} n_v}\Big)\Big)\Big)\nonumber\\&=\mathbf{E}\Big(\sumx_{v}
\mathcal{O}\Big(\frac{n_v}{\log ^{1+\epsilon }n_v}\Big)\Big)
.
\end{align}
We will again consider the $T_{r,B},~r\in R$, subtrees from the proof of Theorem \ref{assumption1} in Section \ref{mainsection} (defined such that $M_{r}^{n}:=n\prod_{j=1}^{d(r)}W_j< B$ and $M_{p(r)}^{n}:=n\prod_{j=1}^{d(r)-1}W_j\geq B$). However, here we choose $B$ differently, i.e., $B=\epsilon^{-20}$.

\begin{Lemma} \label{lem7} 
Assume that (A1)--(A3)  hold, then 
\begin{align}\label{sumi2,2}
 \alpha\mathbf{E}\Big(\Psi{(T^{n})}\Big)-\mathbf{E}\Big(\Upsilon{(T^{n})}\Big)=\Gamma_n=\mathcal{O}\big(n\big).
\end{align}
Furthermore, 
\begin{align}\label{sumi2,1}\Gamma_n=\sum_{r\in R}\mathbf{E}\big(\Gamma_{n_r}\big)+o(n),
\end{align}
where \begin{align}\label{sumi23}\Gamma_{n_r}
=\alpha\mathbf{E}\Big(\Psi{(T_{r,B})}\Big|n_r\Big)-\mathbf{E}\Big(\Upsilon{(T_{r,B})}\Big|n_r\Big).
\end{align}
\end{Lemma}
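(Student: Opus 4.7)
The plan starts from the identity (\ref{snowy}), which already expresses $\Gamma_n$ as $\mathbf{E}\big(\sumx_{v}\mathcal{O}(n_v/\log^{1+\epsilon}n_v)\big)$. To prove (\ref{sumi2,2}) I would partition the vertices dyadically according to the value of $n_v$, treating the vertices with $n_v$ bounded by a constant separately (their contribution is $\mathcal{O}(n)$ since $\mathbf{E}(N)=\mathcal{O}(n)$ by Lemma \ref{lem11}, and for such $v$ the bound in (A3) is $\mathcal{O}(1)$ per vertex). By Corollary \ref{lem6,1}, the expected number of vertices with $M_v^n \geq 2^k$ is $(\mu^{-1}+o(1))n/2^k$, and the concentration estimate (\ref{iliada1,1}) lets one transfer this bound from $M_v^n$ to $n_v$, giving expected $\mathcal{O}(n/2^k)$ vertices with $n_v\in[2^k,2^{k+1})$. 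Each such vertex contributes $\mathcal{O}(2^k/(k\log 2)^{1+\epsilon})$, so the total sum is of order $n\sum_{k\ge 1}k^{-(1+\epsilon)}=\mathcal{O}(n)$, proving (\ref{sumi2,2}).

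For (\ref{sumi2,1}), I would partition $T^n\setminus\{\sigma\}$ into the set $A$ of non-root vertices with $M_v^n\geq B$ (those lying above $R$), the set $R$ itself, and the interiors $T_{r,B}\setminus\{r\}$. Using the subtree-size representations (\ref{equi1}) and (\ref{equi2}) one obtains
\begin{align*}
\Psi(T^n)&=\sum_{v\in A}n_v+\sum_{r\in R}n_r+\sum_{r\in R}\Psi(T_{r,B}),\\
\Upsilon(T^n)&=\sum_{v\in A}N_v+\sum_{r\in R}N_r+\sum_{r\in R}\Upsilon(T_{r,B}).
\end{align*}
Multiplying the first by $\alpha$, subtracting the second, and taking expectations (using the tower rule on $\Gamma_{n_r}$) gives
\[
\Gamma_n-\mathbf{E}\sum_{r\in R}\Gamma_{n_r}=\mathbf{E}\sum_{v\in A}(\alpha n_v-N_v)+\mathbf{E}\sum_{r\in R}(\alpha n_r-N_r).
\]
By (A3), $\mathbf{E}(N_v\mid n_v)-\alpha n_v=\mathcal{O}(n_v/\log^{1+\epsilon}n_v)$, so the sum over $A$ is the tail of the dyadic series of the first paragraph starting at $k\approx \log_2 B$, which is $\mathcal{O}(n/(\log B)^{\epsilon})$; for $r\in R$, since $n_r\leq B$ and is typically of order $B$ while $\mathbf{E}|R|=\Theta(n/B)$ (Corollary \ref{lem6,1}), the corresponding sum is $\mathcal{O}(n/(\log B)^{1+\epsilon})$. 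With $B=\epsilon^{-20}$ both errors are absorbed into the $o(n)$ term in the scale needed for the downstream application in Theorem \ref{christmas}, yielding (\ref{sumi2,1}).

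The principal technical obstacle is aligning the random subtree sizes $n_v$ with their deterministic renewal proxies $M_v^n$ in the presence of the $\log^{-(1+\epsilon)}n_v$ factor: outside the typical-concentration regime of (\ref{iliada1,1}) there is a small set of atypical vertices where $n_v$ deviates substantially from $M_v^n$, and their contribution must be absorbed into the error using a separate tail estimate analogous to Lemma \ref{lem16}. A secondary care is that (A3) must be applied uniformly in the random cardinality $n_r$, which for $r\in R$ is bounded by $B$ but may be arbitrarily small; the atypically small case is handled using the linear bound $\mathbf{E}(N_v\mid n_v)=\mathcal{O}(n_v)$ from Lemma \ref{lem11}.
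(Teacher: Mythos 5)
Your proposal is correct and follows essentially the same route as the paper: both parts rest on (\ref{snowy}), a dyadic decomposition in $n_v$ controlled via Corollary \ref{lem6,1}, and isolating the vertices outside the $T_{r,B}$ subtrees. You are somewhat more explicit than the paper about transferring the count from $M_v^n$ to $n_v$ and about the set $R$ itself contributing to the residual (since $\Gamma_{n_r}$ excludes $r$), both of which the paper brushes over, and your reading of the $o(n)$ as an error that vanishes only as $B\to\infty$ matches the lemma's actual use in Theorem \ref{christmas}.
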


\begin{proof} Assuming (A3),  we get from (\ref{snowy}) that
\begin{align}\label{sumi2,3}
 \Gamma_n&= \mathbf{E}\Big(\sumx_{v}
\mathcal{O}\Big(\frac{n_v}{\log ^{1+\epsilon }n_v}\Big)\Big)\nonumber\\&= \sum_k\mathbf{E}\Big(\sumx_{v:2^{k}\leq n_v<2^{k+1}}\mathcal{O}\Big(\frac{n_v}{\log ^{1+\epsilon }n_v}\Big)\Big)\nonumber\\&=
\sum_k\mathcal{O}\Big(\frac{n}{2^{k}}\cdot\frac{ 2^{k}}{ k^{1+\epsilon }}\Big)=\mathcal{O}\big(n\big),
\end{align} where we applied Corollary \ref{lem6,1} in the last equality.
In the same way, for the vertices which are not in the $T_{r,B},~r\in R$,
subtrees (ignoring the root $\sigma$) we deduce that

\begin{align} \label{idag}
 \mathbf{E}\Big(\sum_{\radsumma{v\notin T_{r,B},~r\in R,}{v\neq \sigma}}
\mathcal{O}\Big(\frac{n_v}{\log ^{1+\epsilon }n_v}\Big)\Big)=\sum_{k\geq \epsilon^{-20}}  \mathcal{O}\Big(\frac{n}{2^{k}}\cdot\frac{ 2^{k}}{ k^{1+\epsilon }}\Big)=o(n).
\end{align}

 Hence,  (\ref{sumi2,1}) follows from (\ref{idag}) and  (\ref{sumi2,3}).
\end{proof}

\begin{proof}[Proof of Theorem \ref{christmas}]
We will use the same type of proof as the proof of (\ref{vertexball}) in Theorem \ref{assumption1}.
We start with two arbitrary values  of  the cardinality $n$ and $\widehat{n}$, where $\widehat{n}\geq n$, and show that
\begin{align}\label{limsuphoj}\Big|\frac{\Gamma_n}{n}-\frac{\Gamma_{\widehat{n}}}{\widehat{n}}\Big|=\mathcal{O}\big(\epsilon\big)~~~~\mathrm{as}~n~\rightarrow\infty,~~\forall\epsilon>0.
\end{align}
Since (\ref{limsuphoj}) 
implies that $\frac{\Gamma_{ n}}{n}$ is Cauchy it also converges to some constant as $n$ tends to infinity; hence, we deduce Theorem  
\ref{christmas}. Recall from the proof of Theorem \ref{assumption1} that a main application for the proof is to use (\ref{obvious}) in Lemma \ref{lem11}. Here we use an analogous applications of (\ref{sumi2,3}) in Lemma \ref{lem7},
 i.e., $\Gamma_{n}=\mathcal{O}\big(n\big)$.


Recall that we prove Lemma \ref{lem16} by showing 
\begin{align}\label{hoaloa}&\mathbf{E}\Big(\sum_r n_{r}I\{|n_{r}-M_{r}^{n}|\geq B^{0.6}\}\Big)=\mathcal{O}\Big(\frac{n}{B^{0.1}}\Big),
 \end{align}
and then applying (\ref{obvious}) in Lemma \ref{lem11}. In the same way by using (\ref{hoaloa}) and (\ref{sumi2,2}) as well as (\ref{sumi2,1}) in Lemma \ref{lem7} we get that
\begin{align}\label{hajjul}\Gamma_n=\mathbf{E}\Big(\sum_{r\in R} \Gamma_{n_r}I\{|n_{r}-M_{r}^{n}|\leq B^{0.6}\}\Big)+o(n)+\mathcal{O}\Big(\frac{n}{B^{0.1}}\Big).
\end{align}

Recall that $R'\subseteq R$ is the set of vertices such that $r\in R'$ if
\begin{align} \label{epsilonigen}|n_{r}-M_{r}^{n}|\leq B^{0.6},
 \end{align} and that
 $ R''\subseteq R'$ is the set of vertices such that $r\in R''$ if 
$r\in R'$ and
\begin{align}\label{epsilon2igen} 
\epsilon B< M_{r}^{n}< B.
\end{align}
Lemma \ref{lem16}  shows that we only need to consider the vertices in $r\in R'$.
Similarly as in (\ref{limsupha}) we get
 \begin{align}\label{limsuphajul}\Gamma_n=\mathbf{E}\Big(\sum_{r\in R''} \Gamma_{n_r}\Big)+\mathcal{O}\big(\epsilon n\big)+\mathcal{O}\Big(\frac{n}{B^{0.1}}\Big).
\end{align}
Recall from the proof of Theorem \ref{assumption1} that we sub-divide the 
$T_{r,B},~r\in R$, subtrees into smaller classes, wherein the $M_{r}^{n}$, $r\in R$, in each class are close to each-other. 
 As before we choose  $\gamma=\epsilon^2$, and  let $ Z=\{B,B-\gamma B,B-2\gamma B,\dots,\epsilon B\}$, where  $\epsilon=\frac{1}{k}$ for some positive integer $k$.
Recall that we write $R_z\subseteq R,~z\in Z$, 
for the set of vertices $r\in R$, such that  $M_{r}^{n}\in [z-\gamma B,z)$ and $M_{p(r)}^{n}\geq B$.
Hence, (\ref{limsuphajul}) gives 
\begin{align}\label{limsupnujul}&\Gamma_n
=\mathbf{E}\Big(\sum_{z\in Z}\sum_{r\in R'\cap  R_{z}}\Gamma_{n_r}\Big)+
\mathcal{O}\Big(\epsilon n\Big)+\mathcal{O}\Big(\frac{n}{B^{0.1}}\Big).
\end{align}
To approximate the expected value in (\ref{limsupnujul}) we apply a lemma that is similar to Lemma \ref{lem14}.
\begin{Lemma}\label{lemmatot} Adding $K$ balls to a tree can at most have an influence on
$\mathbf{E}\Big(\Psi{(T^{n})}\Big)$ and  $\mathbf{E}\Big(\Upsilon{(T^{n})}\Big)$, respectively, by $\mathcal{O}(K\ln(n+K))$.
\end{Lemma}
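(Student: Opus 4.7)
The plan is to couple $T^n$ and $T^{n+K}$ via the natural sequential construction (use the same split vectors everywhere, and first distribute the original $n$ balls, then add $K$ extra balls one by one). Both $\Psi$ and $\Upsilon$ can then be written as $\Psi(T^n)$ (respectively $\Upsilon(T^n)$) plus a telescoping increment collected as the $K$ extra balls are inserted. Along the way I would rely on two facts already available in the paper: the expected depth of any inserted ball is $\mathcal{O}(\ln(n+K))$ (by the weak law (\ref{expsplitdepth}) together with Lemma \ref{comparingdepths}), and Lemma \ref{lem14}, which shows that the $K$ extra balls increase the node count only by $\mathcal{O}(K)$ in expectation.

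For $\Psi$ I would decompose
\[
\Psi(T^{n+K})-\Psi(T^n)=\sum_{k=n+1}^{n+K}D_k^{(n+K)}+\sum_{k=1}^{n}\bigl(D_k^{(n+K)}-D_k^{(n)}\bigr),
\]
where $D_k^{(m)}$ denotes the depth of ball $k$ in a tree of cardinality $m$. The first sum is $\mathcal{O}(K\ln(n+K))$ in expectation because each $D_k^{(n+K)}$ is stochastically dominated by $D_{n+K}$, whose mean is $\mathcal{O}(\ln(n+K))$. For the second sum, note that a previously-inserted ball can only move \emph{downwards}, and this happens only when a split occurs along its branch; each individual split shifts at most $s+1$ balls down by exactly one level. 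The number of splits triggered by the $K$ extra balls is bounded above by the number of additional nodes created, which is $\mathcal{O}(K)$ in expectation by Lemma \ref{lem14}. Hence the second sum contributes only $\mathcal{O}(K)$, and the total expected change in $\Psi$ is $\mathcal{O}(K\ln(n+K))$.

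For $\Upsilon$ I would write $\Upsilon(T^{n+K})-\Upsilon(T^n)$ as a sum of $d(w)$ over the new nodes $w$ created during the $K$ extra insertions. Each such $w$ is a child produced by a split, so its depth is at most $H_{n+K}+\mathcal{O}(1)$. I would then bound
\[
\mathbf{E}\!\left(\sum_{w\text{ new}}d(w)\right)\le \mathbf{E}\bigl(N_{\text{new}}\cdot(H_{n+K}+1)\bigr)
\le \bigl(\mathbf{E}(N_{\text{new}}^{2})\bigr)^{1/2}\bigl(\mathbf{E}((H_{n+K}+1)^{2})\bigr)^{1/2}
\]
by Cauchy--Schwarz. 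The first factor is $\mathcal{O}(K)$ via the same splitting-chain estimate that underlies Lemma \ref{lem14} and the second-moment bound in (\ref{obv}) applied to the $K$-ball sub-process, while the second factor is $\mathcal{O}(\ln(n+K))$ using the well-known fact that the height of a split tree has finite moments of order $\ln^{p}(n+K)$ (this follows from the exponential tail bounds for $H_n$ used by Devroye and already exploited in (\ref{height})).

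The only real obstacle is the second part of the $\Psi$-argument: one must verify that counting each split's contribution as $(s+1)\cdot 1$ and summing gives the sharp bound, i.e.\ that chain splits do not secretly accumulate logarithmic factors. This is handled by the observation that every split creates at least one genuinely new node of the tree, so the total number of splits (chain or otherwise) is absorbed into the $\mathcal{O}(K)$ node-count bound from Lemma \ref{lem14}, and the $(s+1)$-factor is harmless since $s$ is a constant of the split-tree family.
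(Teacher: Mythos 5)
Your proof is correct and follows the same basic coupling/telescoping strategy that the paper uses (successively add the $K$ extra balls, bound the increment from each), but you fill in a gap the paper's two-sentence sketch glosses over and you use a different technical tool for $\Upsilon$. For $\Psi$, the paper only accounts for the expected depth of the newly-inserted ball and the $\mathcal{O}(1)$ expected new nodes; it never explicitly addresses the fact that previously-inserted balls are pushed \emph{down} when splits occur, which is a genuine contribution to the change in $\Psi$. You handle this by observing that every split creates at least one new node, so the number of splits triggered by the $K$ extras is $\mathcal{O}(K)$ in expectation (via Lemma~\ref{lem14}), and each split moves at most $s+1$ balls down one level; this gives the extra $\mathcal{O}(K)$ term cleanly. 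For $\Upsilon$, the paper's implicit argument is that the expected number of new nodes ($\mathcal{O}(1)$ per ball) is conditionally independent of the depth at which the ball lands (it depends only on the split vectors at and below the leaf), so the product has expectation $\mathcal{O}(\ln(n+K))$; you instead apply Cauchy--Schwarz to $N_{\text{new}}\cdot(H_{n+K}+1)$, using the second-moment bound $\mathbf{E}(Z^2)=\mathcal{O}(1)$ from the proof of Lemma~\ref{lem11} together with Minkowski to get $(\mathbf{E}(N_{\text{new}}^2))^{1/2}=\mathcal{O}(K)$, and the exponential tail on the height to get $(\mathbf{E}(H_{n+K}^2))^{1/2}=\mathcal{O}(\ln(n+K))$. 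Your route is somewhat heavier (needing second moments of both factors) where the paper's conditional-independence observation gets by with first moments, but your version is self-contained and avoids the independence claim, which is left implicit in the paper. Both are valid.
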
 
\begin{proof}
Adding one ball to a tree with $n$ balls the expected depth is $\mathcal{O}(\ln n)$ and the expected number of additional nodes is $\mathcal{O}(1)$. Note that $\mathcal{O}(1)$ nodes only have distances $\mathcal{O}(1)$ between each other.) Hence, when the $K$-th ball is added the expected depth is $\mathcal{O}(\ln(n+K))$. Since $K$ balls give an expectation of $\mathcal{O}(K)$ nodes the result holds for both $\mathbf{E}\Big(\Psi{(T^{n})}\Big)$ and $\mathbf{E}\Big(\Upsilon{(T^{n})}\Big)$. 
 \end{proof}

Let $r_z$ be an arbitrarily chosen node in $R'\cap R_z$, where $z\in Z$. Similarly 
as in (\ref{limsupnu.1}),
by using (\ref{epsilonigen}) and Lemma \ref{lemmatot}, from (\ref{limsupnujul}) we get 
\begin{align}\label{limsupnuigen}&\Gamma_n=\sum_{z\in Z}\mathbf{E}(|R'\cap R_z|)\Big(\Gamma_{n_{r_z}}+\mathcal{O}\big(\gamma B\ln B\big)\Big)+\mathcal{O}\big(\epsilon n\big)+\mathcal{O}\Big(\frac{n}{B^{0.1}}\Big).
\end{align}
Define $b_x$ in a tree with cardinality $\lfloor x \rfloor$ as $b_x:=\frac{\Gamma_{\lfloor x \rfloor}}{\lfloor x \rfloor}$, and
 note from Lemma \ref{lem7} that $b_x=\mathcal{O}\big(1\big)$.
Recall that $S=\{1, 1-\gamma,1-2\gamma,\dots ,\epsilon\}$, where $\gamma=\epsilon^2$.
 Recall from (\ref{residual4.1}) that for each choice of $\gamma$ and $\alpha \in S$, there is a $\sigma_{\gamma}$ such that for a constant $c_{\alpha}$ (depending on $\alpha$),
\begin{align}\label{annandag}\Big|\dfrac{\mathbf{E}(|R'\cap R_{\alpha B}|)}{\frac{n}{B}}-c_{\alpha} \Big|\leq \gamma^{2}+\mathcal{O}\Big(\frac{1}{B^{0.1}}\Big),
\end{align}
whenever
 $\frac{n}{B}\geq \frac{1}{\sigma_{\gamma}}$. By choosing $B=\epsilon^{-20}$  for $n$ large enough  $\frac{n}{B}\geq \frac{1}{\sigma_{\gamma}}$ so that (\ref{annandag}) holds.
Moreover,  since $\sum_{\alpha \in S}c_{\alpha}=\mathcal{O}(1)$ we have that $\sum_{\alpha \in S}c_{\alpha}\frac{\mathcal{O}(B\gamma\ln B )}{B}=\mathcal{O}(\gamma\ln \epsilon )$.
Recall that  $\gamma=\epsilon^2$.
Thus, for a constant $c_{\alpha}$ (depending on $\alpha$)  and $b_{\alpha B}=\mathcal{O}\big(1\big)$, (\ref{limsupnuigen}) and (\ref{annandag}) imply that
\begin{align}\label{limsuphej}\Gamma_n&=n\sum_{\alpha \in S}c_{\alpha}\frac{1}{B}\Big(b_{\alpha B} \alpha B+\mathcal{O}(B\gamma\ln B)\Big)+n\sum_{\alpha \in S}\mathcal{O}\big(b_{\alpha B}\gamma^{2}\big)+\mathcal{O}\big(\epsilon n\big)=
\nonumber\\&=n\sum_{\alpha \in S}\alpha b_{\alpha B}c_{\alpha}+\mathcal{O}\big(\epsilon n\big).
\end{align}
In analogy, also for $\widehat{n}\geq n$,
\begin{align}\label{limsuphejigen}&\Gamma_{\widehat{n}}=
\widehat{n}\sum_{\alpha \in S}\alpha b_{\alpha B}c_{\alpha}+\mathcal{O}\big(\epsilon\widehat{n}\big).
\end{align}
Thus, (\ref{limsuphoj}) follows, which shows Theorem \ref{christmas}.
\end{proof}

Finally we present a theorem that is applied in \cite{holmgren3}.
\begin{thm} \label{thm7} Let  $L=\lfloor \beta \log_b\ln{n}\rfloor$ for some large enough constant $\beta$. Assume that (A1)--(A3)  hold, then 
\begin{align}\label{TPL,1}&\sum_{i=1}^{b^L}\frac {\Psi{(T_i)}}{\mu^{-2}{\ln^2{n_i}}}=\sum_{i=1}^{b^L}
\frac{ n_i}{\mu^{-1}\ln{n_i}}+\frac{ n \varsigma }{\mu^{-2}\ln^2{n}} +o_p\Big(\frac {n}{{\ln^2{n}}}\Big),\end{align}
and
\begin{align}\label{TPL}&\sum_{i=1}^{b^L}\frac {\Upsilon{(T_i)}}{\mu^{-2}{\ln^2{n_i}}}=\sum_{i=1}^{b^L}
\frac{\alpha n_i}{\mu^{-1}\ln{n_i}}+\frac{ n \zeta }{\mu^{-2}\ln^2{n}} +o_p\Big(\frac {n}{{\ln^2{n}}}\Big).\end{align}
\end{thm}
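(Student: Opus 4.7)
The plan is to follow closely the template of the proof of Corollary \ref{cor2}. I would condition on $\Omega_L$, exploit the conditional independence of the subtrees $T_i$, compute their conditional means via (\ref{exptotal1})--(\ref{exptotal2}) together with assumption (A2) and Theorem \ref{christmas}, and then control the conditional fluctuations with Chebyshev.

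Given $\Omega_L$, each $T_i$ is an independent split tree of cardinality $n_i$, so (\ref{exptotal1}) yields
\begin{align*}
\mathbf{E}\!\left(\frac{\Psi(T_i)}{\mu^{-2}\ln^{2}n_i}\,\bigg|\,\Omega_L\right) = \frac{n_i}{\mu^{-1}\ln n_i} + \frac{\mu^{2} n_i\, q(n_i)}{\ln^{2}n_i},
\end{align*}
and (\ref{exptotal2}) gives the analogue for $\Upsilon(T_i)$ with $\alpha n_i$ and $r(n_i)$ in place. Summing reproduces the first sum on the right-hand side of (\ref{TPL,1}) and (\ref{TPL}); it then remains to identify the correction term and to bound the centred fluctuation. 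Subtrees with $n_i$ bounded are absorbed into error terms exactly as in the proof of Corollary \ref{cor2} via (\ref{boundy}).

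To handle $\sum_i \mu^{2} n_i q(n_i)/\ln^{2}n_i$, I would split the $T_i$'s according to whether $n_i \leq f(n):=n/\ln^{\beta+4}n$ or not. The tiny subtrees together hold at most $b^{L}\cdot f(n)=O(n/\ln^{4}n)$ balls (using (\ref{boundy})), and since $|q(n)|=o(\ln^{0.5}n)$ by (\ref{exptotal1}), their contribution is $O(n/\ln^{4}n)=o(n/\ln^{2}n)$. For the remaining $i$ we have $\ln n_i = \ln n - O(\ln\ln n)$, so by the Taylor expansion in (\ref{hejsan}), $1/\ln^{2}n_i = (1+o(1))/\ln^{2}n$; meanwhile (A2) gives $q(n_i)=\varsigma+o(1)$ uniformly over this range because $f(n)\to\infty$. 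Combined with $\sum_i n_i = n - O(\ln^{\beta}n)$, this yields
\begin{align*}
\sum_{i=1}^{b^L}\frac{\mu^{2} n_i\, q(n_i)}{\ln^{2}n_i}=\frac{\mu^{2}\varsigma\, n}{\ln^{2}n}+o\!\left(\frac{n}{\ln^{2}n}\right)=\frac{\varsigma\, n}{\mu^{-2}\ln^{2}n}+o\!\left(\frac{n}{\ln^{2}n}\right).
\end{align*}
The same argument, using Theorem \ref{christmas} (which provides $r(n)\to\zeta$) in place of (A2), produces the $\zeta$-term in (\ref{TPL}).

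For the centred fluctuation, conditional independence of the $T_i$'s given $\Omega_L$ gives
\begin{align*}
\mathbf{Var}\!\left(\sum_{i=1}^{b^L}\frac{\Psi(T_i)}{\mu^{-2}\ln^{2}n_i}\,\bigg|\,\Omega_L\right)=\sum_{i=1}^{b^L}\frac{\mathbf{Var}(\Psi(T_i)\mid n_i)}{\mu^{-4}\ln^{4}n_i}.
\end{align*}
Writing $\Psi(T^{m})=\sum_{k=1}^{m}D_k$ and using Minkowski together with $\mathbf{E}(D_k^{2})=O(\ln^{2}m)$ (Theorem \ref{lemma}) gives $\mathbf{Var}(\Psi(T^{m}))=O(m^{2}\ln^{2}m)$; restricting attention to the substantial subtrees (the tiny ones are absorbed via (\ref{boundy})), the conditional variance is $O((\ln^{-2}n)\sum_i n_i^{2})$, and (\ref{sum4}) with $\beta$ large gives $\sum_i \mathbf{E}(n_i^{2})=o(n^{2}/\ln^{4}n)$, so the conditional variance is $o_p(n^{2}/\ln^{4}n)$. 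Chebyshev applied conditionally on $\Omega_L$ then delivers the $o_p(n/\ln^{2}n)$ error in (\ref{TPL,1}), and (\ref{TPL}) follows identically via $\mathbf{Var}(\Upsilon(T^{m}))=O(m^{2}\ln^{2}m)$, obtained from $\Upsilon\leq N\cdot H_m$ and the moment bounds in Lemma \ref{lem11} and Remark \ref{rem1}. The main obstacle is the correction-term step: assumption (A2) supplies only qualitative convergence of $q$, so the cut-off $f(n)$ must be calibrated so that both the tiny-subtree mass and the Taylor error on $1/\ln^{2}n_i$ are simultaneously $o(n/\ln^{2}n)$, while the uniform convergence $q(n_i)\to\varsigma$ over $n_i>f(n)$ is strong enough to collapse the residual.
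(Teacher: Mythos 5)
Your proposal is correct and follows essentially the same strategy as the paper: condition on $\Omega_L$, exploit conditional independence of the $T_i$, compute the conditional mean via (\ref{exptotal1})/(\ref{exptotal2}), use (A2) and Theorem \ref{christmas} together with the Taylor expansion (\ref{hejsan}) to isolate the correction term, and bound the centred fluctuation via second-moment estimates on $\Psi(T_i)$/$\Upsilon(T_i)$ and the bound $\sum_i\mathbf{E}(n_i^2)=o(n^2/\ln^k n)$ (the paper's (\ref{sumi})), then apply Chebyshev conditionally. The paper proves (\ref{TPL}) and declares (\ref{TPL,1}) analogous, whereas you do the reverse, and you bound $\mathbf{Var}(\Upsilon(T_i))$ via $\Upsilon\le N\cdot H$ while the paper uses Cauchy--Schwarz with $\mathbf{E}(N^2)=\mathcal O(n^2)$ and $\mathbf{E}(D_k^2)=\mathcal O(\ln^2 n)$; these are cosmetic differences.
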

\begin{proof}
We only show (\ref{TPL}), since we can use exactly the same type of arguments 
 for showing (\ref{TPL,1}).

First, (\ref{exptotal2}) gives
\begin{align}\label{exptotal1,1}
\mathbf{E}\bigg(\sum_{i=1}^{b^L}\frac {\Upsilon{(T_i)}}{\mu^{-2}{\ln^2{n_i}}}\Big|\Omega_L\bigg)=
\sum_{i=1}^{b^L}\frac {{\alpha n_i}}{\mu^{-1}{\ln{n_i}}}+\sum_{i=1}^{b^L}\frac{n_ir(n_i)}{\mu^{-2}{\ln^2{n_i}}}.\end{align} 

Note that conditioned on $\Omega_L$, the summands $\Upsilon{(T_i)},~i\in\{1,\dots,b^L\}$  are independent.  By applying  the Cauchy-Schwarz inequality, and using the facts that $\mathbf{E}(N^2)=\mathcal{O}\big(n^2\big)$ and that $\mathbf{E}\big(D_k^2\big)=\mathcal{O}\big(\ln^2 n\big)$ for all $k$, we deduce that
\begin{align}\label{sumtot}\mathbf{Var}\bigg(\sum_{i=1}^{b^L}
 {\Upsilon{(T_i)}}\Big|\Omega_L\bigg)&=\sum_{i=1}^{b^L}\mathbf{Var}\Big(
 {\Upsilon{(T_i)}}\Big|\Omega_L\Big)\nonumber\\\leq \sum_{i=1}^{b^L}\mathbf{E}\Big(
\Upsilon{(T_i)^2}\Big|\Omega_L\Big)&= \sum_{i=1}^{b^L}\mathcal O\Big(n_i^2\ln^2{n_i}\Big).\end{align}
 Similarly as in (\ref{sum4}), for any constant $k$ (and choosing the constant $\beta$ in $L$ large enough) the following holds
\begin{align}\label{sumi}
\mathbf{E}\bigg(\sum_{i=1}^{b^L}n_i^2\bigg)=\frac{n^2}{\ln ^kn}.\end{align} 
Thus, for a large enough constant $\beta$, by taking expectations in (\ref{sumtot}) we get
\begin{align}\label{sumtot1}&\mathbf{E}\mathbf{Var}\bigg(\sum_{i=1}^{b^L}
 \frac {\Upsilon{(T_i)}}{\mu^{-2}{\ln^2{n_i}}}\Big|\Omega_L\bigg)=o\Big(\frac {{n^2}}{{\ln^4{n}}}\Big).\end{align}
Using (\ref{exptotal1,1}) and (\ref{sumtot}) and applying  (\ref{sumtot1}), the Chebyshev inequality results in that conditioning on $\Omega_L$,
\begin{align}\label{TPL1}&\sum_{i=1}^{b^L}\frac {\Upsilon{(T_i)}}{\mu^{-2}{\ln^2{n_i}}}=\sum_{i=1}^{b^L}
\frac{\alpha n_i}{\mu^{-1}\ln{n_i}}+\sum_{i=1}^{b^L}\frac{n_ir(n_i)}{\mu^{-2}{\ln^2{n_i}}} +o_p\Big(\frac {n}{{\ln^2{n}}}\Big).\end{align}
By applying Theorem \ref{christmas},  (\ref{hejsan}) and (\ref{hals}) we get 
\begin{align}\label{vit}\sum_{i=1}^{b^L}\frac{n_ir(n_i)}{\mu^{-2}{\ln^2{n_i}}}&=
\sum_{i=1}^{b^L}\frac{\zeta n_i}{\mu^{-2}{\ln^2{n_i}}}+\sum_{i=1}^{b^L}\frac {o(n_i)}{\ln^{2}{n_i}}
\nonumber\\&=\frac {\zeta n}{\ln^{2}{n}}+ o\left(\frac {n}{\ln^{2}{n}}\right).\end{align}

 Thus,  (\ref{TPL}) follows from (\ref{TPL1}) and (\ref{vit}).

\end{proof}



\subsubsection* {Acknowledgement:}
Professor Svante Janson is gratefully acknowledged for invaluable support and advice. I  also thank Dr Nicolas Broutin for helpful discussions.

\end{document}